\documentclass[12pt, reqno]{amsart}
\pdfoutput=1
\usepackage{versions}

\includeversion{withcode}
\excludeversion{withoutcode}
\usepackage{fullpage}



\usepackage{amssymb}
\usepackage{verbatim}
\usepackage{calrsfs}
\usepackage{graphicx}
\usepackage{epsfig}
\usepackage{color}
\usepackage[all]{xy}

\numberwithin{equation}{section} 


\newtheorem{thm}{Theorem}[section]
\newtheorem*{thm*}{Theorem}
\newtheorem{lem}[thm]{Lemma}
\newtheorem{cor}[thm]{Corollary}
\newtheorem{prop}[thm]{Proposition}

\newtheorem*{Bogomolov}{Geometric Bogomolov Conjecture}
\newtheorem*{Gap}{Height Gap Principle}

\newtheorem*{conjecture*}{Conjecture}
\newtheorem{conjecture}[thm]{Conjecture}
\theoremstyle{remark} \newtheorem*{question*}{Question}
\theoremstyle{definition} 
\theoremstyle{remark} 
\theoremstyle{remark} 
\theoremstyle{remark} \newtheorem{remark}[thm]{Remark}
\theoremstyle{remark} 
\theoremstyle{definition} 
\theoremstyle{remark}  
\theoremstyle{definition}

\newcommand{\OO}{\mathcal{O}}    
\newcommand{\ZZ}{\mathbb{Z}}     
\newcommand{\RR}{\mathbb{R}}     
\newcommand{\PP}{\mathbb{P}}      
\newcommand{\QQ}{\mathbb{Q}}      
\newcommand{\CC}{\mathbb{C}}      

\newcommand{\be}{\begin{equation}}
\newcommand{\ee}{\end{equation}}
\newcommand{\benn}{\begin{equation*}}
\newcommand{\eenn}{\end{equation*}}
\newcommand{\ba}{\begin{aligned}}
\newcommand{\ea}{\end{aligned}}
\newcommand{\bbm}{\begin{bmatrix}}
\newcommand{\ebm}{\end{bmatrix}}
\newcommand{\bpm}{\begin{pmatrix}}
\newcommand{\epm}{\end{pmatrix}}
\newcommand{\bi}{\begin{itemize}}
\newcommand{\ei}{\end{itemize}}
 


\newcommand{\Spec}{\operatorname{Spec}}
\newcommand{\ord}{\operatorname{ord}}
\newcommand{\Div}{\operatorname{Div}}    
\newcommand{\Pic}{\operatorname{Pic}}
\newcommand{\Jac}{\operatorname{Jac}}       

\newcommand{\simarrow}{\stackrel{\sim}{\rightarrow}}    
\newcommand{\alg}[1]{\overline{#1}}   
\newcommand{\dint}{\int \!\!\! \int}   

\newcommand{\Zepsilon}[1]{\epsilon\left( #1\right)}
\newcommand{\Ztau}[1]{a \left(#1\right)}

\newcommand{\Zphi}[1]{\varphi\left(#1\right)}

\newcommand{\Zg}[1]{g \left(#1\right)}
\newcommand{\agam}{\alg{\Gamma}}


\title[Geometric Bogomolov Conjecture]{The Geometric Bogomolov Conjecture \\ for Small Genus Curves 
}
\author{X.W.C. Faber}
\address{
Department of Mathematics and Statistics \\
McGill University \\
Montr\'eal, QC  H3A 2K6 \\ 
CANADA} 
\email{xander@math.mcgill.ca}
\urladdr{http://www.math.mcgill.ca/xander/}


\subjclass[2000]{11G30 (primary);
 14G40, 11G50 (secondary)}
\keywords{Bogomolov Conjecture, Curves of Higher Genus, Function Fields, Metric Graphs}

\begin{document}
	\begin{abstract}
		The Bogomolov Conjecture is a finiteness statement about algebraic points of small height on a 
		smooth complete curve defined over a global field. We verify an effective form of the Bogomolov 
		Conjecture for all curves of genus at most $4$ over a function field of characteristic zero. 
		We recover the known result for genus $2$ curves and in many cases improve upon the known bound for 
		genus $3$ curves. For many curves of genus~$4$ with bad reduction, the conjecture was 
		previously unproved. 
	\end{abstract}

\maketitle



\section{Introduction}

\subsection{The Conjecture and Main Theorem}
	
	Fix an algebraically closed field $k$ of characteristic zero and a smooth proper connected curve $Y/k$. Define $K$ to be the field of rational functions on $Y$. Let $C$ be a smooth proper geometrically connected curve of genus at least $2$ over the function field $K$. Choose a divisor $D$ of degree~1 on $\alg{C} = C \times_K \alg{K}$ and consider the embedding of $C$ into its Jacobian $\Jac(C) = \Pic^0(C)$ given on geometric points by $j_D(x) = [x] - D$. Define
	\benn
		a'(D) = \liminf_{x \in C(\alg{K})} \hat{h} \left(j_D(x)\right), 
	\eenn
where $\hat{h}$ is the canonical N\'eron-Tate height on the Jacobian associated to the symmetric ample divisor $\Theta + [-1]^*\Theta$. As $C(\alg{K})$ may not be countable, the liminf is taken to mean the limit over the directed set of all cofinite subsets of $C(\alg{K})$ of the infimum of the heights of points in such a subset. Recall that $C$ is called \textbf{constant} if there is a curve $C_0$ defined over the constant field $k$ and a finite extension $K' / K$ such that $C_{K'} = C_0 \times_k K'$. Write $\Div^1\left(\alg{C}\right)$ for the set of divisors of degree~1 on $\alg{C}$. We wish to investigate the 
\begin{Bogomolov}[\cite{Bogomolov_Conjecture}]
 	If $C$ is not a constant curve, then 
		\[
			\inf_{D \in \Div^1\left(\alg{C}\right)}a'(D) > 0.
		\]
\end{Bogomolov}

	In response, we have proved the following result:

\begin{thm} \label{Thm: Main Theorem Imprecise}
	Let $K$ be a function field of characteristic zero. Then the Geometric Bogomolov Conjecture is true for all 
	curves $C / K$ of genus $2 \leq g \leq 4$. Moreover, if $C$ is not a constant curve, 
	there is an effectively computable positive lower bound for $a'(D)$ that is uniform in $D$.
\end{thm}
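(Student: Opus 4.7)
The plan is to derive an effective positive lower bound for $a'(D)$ from the combinatorial geometry of the reduction graphs of $C$ at places of bad reduction, in the spirit of Zhang's admissible pairing. Because $C$ is non-constant, there is at least one place $v$ of $K$ at which the minimal regular semistable model of $C$ has a special fibre whose dual graph is not a tree; equivalently, the metric graph $\Gamma_v$, obtained by assigning each edge a length equal to the deformation-theoretic thickness of the corresponding node and polarized by the arithmetic genera of its components, has strictly positive first Betti number. The entire argument is driven by extracting quantitative information from these non-trivial $\Gamma_v$.

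The first analytic step is to invoke a Zhang-type inequality. The N\'eron--Tate height $\hat h(j_D(x))$ decomposes as a sum of local contributions that can be computed as admissible pairings on the Berkovich analytifications of $C$ at each place. After choosing a distinguished divisor, for instance $D_0 = \omega_C/(2g-2)$ with $\omega_C$ the canonical class, one obtains an estimate of the form
\[
 a'(D_0) \;\geq\; c(g)\sum_{v} \varphi(\Gamma_v),
\]
where $c(g) > 0$ depends only on the genus and $\varphi(\Gamma_v)$ is Zhang's non-negative invariant of the polarized metric graph $\Gamma_v$, vanishing precisely on trees. Uniformity in $D$ is then recovered by a separate standard argument bounding $|a'(D) - a'(D_0)|$ in terms of the height of the translation on $\Jac(C)$ carrying $j_D$ to $j_{D_0}$.

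The combinatorial step is to produce an effective positive lower bound for $\varphi(\Gamma)$ over all polarized metric graphs that can arise as reduction graphs of curves of genus $2 \leq g \leq 4$. The first Betti number of such a $\Gamma$ is at most $g$, the number of edges is at most $3g-3$, and the number of vertices is at most $2g-2$, with vertex weights constrained by the genus equation; hence only finitely many combinatorial types occur. For each type $G$ with $m$ edges the invariant $\varphi$ is a scale-invariant continuous function on the open simplex of edge-length vectors, and I would bound it from below by an explicit constant using its expression in terms of effective resistances and spanning-tree sums.

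The main obstacle lies in this last step. For the richest combinatorial types in genus~$4$---in particular the trivalent semistable types with nine edges and six vertices---the resistance expression for $\varphi$ involves many parameters, and extracting a sharp closed-form lower bound requires careful case analysis and convex-optimization arguments on the edge-length simplex. Merely enumerating all stable weighted graph types of genus four is already a nontrivial combinatorial task, and performing the minimization uniformly across types in a way that produces a single effective constant is where the novelty of the paper, relative to the earlier genus~$2,3$ work, will be concentrated.
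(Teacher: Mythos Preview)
Your proposal has a genuine gap at the very first step. You assert that because $C$ is non-constant there must be a place $v$ where the reduction graph has positive first Betti number. This is false: non-isotrivial families with everywhere smooth fibres exist (Kodaira fibrations, for instance), and for such a $C$ every reduction graph $\Gamma_v$ is a single weighted point, so $\varphi(\Gamma_v)=0$ for all $v$ and the inequality $a'(D_0)\geq c(g)\sum_v\varphi(\Gamma_v)$ yields nothing. The paper handles this case by an entirely separate argument: when the semistable model $f\colon X'\to Y'$ is smooth one has $(\omega_a,\omega_a)=\omega_{X'/Y'}\cdot\omega_{X'/Y'}/d$, and Par\v{s}in's theorem gives $\omega_{X'/Y'}\cdot\omega_{X'/Y'}\geq 12$, so Zhang's inequality $a'(D)\geq(\omega_a,\omega_a)/4(g-1)$ already delivers the effective bound $3/d(g-1)$. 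Relatedly, your claim that $\varphi$ vanishes ``precisely on trees'' is incorrect: on a non-trivial tree (a chain of positive-genus components, say) $\varphi$ is strictly positive, equal to $\sum_i\frac{2i(g-i)}{g}\ell_i(\overline{\Gamma})$. The correct dichotomy is smooth fibre versus singular fibre, not tree versus non-tree.

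On the combinatorial side your plan is in principle sound but much heavier than the paper's, and it glosses over a real difficulty. Enumerating \emph{all} polarized stable graph types of genus $\leq 4$ gives dozens of cases; the paper instead reduces---via additivity of $\varphi$ under one-point sums, a circle-attaching trick that kills the vertex weights $q$, and continuity of $\varphi$ under edge contraction---to irreducible cubic graphs with $q\equiv 0$, of which there are only eight in total across $g=2,3,4$. Moreover, saying that $\varphi$ is ``scale-invariant and continuous on the open simplex'' does not by itself give a positive infimum: $\varphi$ is homogeneous of degree $1$, not scale-invariant, and the infimum of $\varphi/\ell(\Gamma)$ over the open simplex could a priori be zero as one approaches the boundary. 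The edge-contraction continuity (which identifies each boundary face with the invariant of a lower-complexity graph already handled) is exactly what closes this gap, and it is one of the non-obvious ingredients of the paper.
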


A more precise form of this result is given by Theorem~\ref{Thm: Main Theorem} below. 

	It is worth noting that the dependence on $D$ in the Geometric Bogomolov Conjecture is superficial. Indeed, it follows from \cite[Thm.~5.6]{Zhang_Admissible_Pairing_1993} that for any degree-1 divisor $D$,
	\benn
		a'(D) \geq \frac{1}{2}a'\left(\frac{K_C}{2g-2}\right),
	\eenn
where $K_C$ is a canonical divisor on $C$ and $K_C / (2g-2)$ is a degree-1 divisor with rational coefficients. Thus is suffices to obtain a positive lower bound for the single divisor $\xi = K_C / (2g-2)$. Now we can reformulate the Geometric Bogomolov Conjecture in the following more intuitive form:

\begin{Gap}
	If $C$ is not a constant curve and $\xi =  K_C / (2g-2)$, then there exists $\varepsilon > 0$ such that for any $x \in C(\alg{K})$, 
		\[
			\hat{h}\left(j_{\xi}(x)\right) \not= 0 \Longrightarrow \hat{h}\left(j_{\xi}(x)\right) \geq \varepsilon.
		\]
\end{Gap}

	The Height Gap Principle is true for all curves of genus at most $4$ by Theorem~\ref{Thm: Main Theorem Imprecise}. However, the theorem does not provide any information on the algebraic point of smallest positive height, so we cannot effectively choose $\varepsilon$ in the statement of the Height Gap Principle. 
	
\subsection{A More Precise Statement and Some History}
\label{Sec: Statement}

	We now provide a more precise statement of Theorem~\ref{Thm: Main Theorem Imprecise}. We continue to use the notation from the previous section, but let us stress that we still assume the curve $C$ has genus at least $2$. By the Semistable Reduction Theorem we may pass to a finite extension field $K'$ over which $C_{K'} = C \times_K K'$ has semistable reduction. One can do this effectively by choosing $K'$ so that all of the $12$-torsion points of $\Jac(C)$ are rational. For references on semistable reduction theory, including proofs of these facts, see \cite[expos\'e $n^{\circ}$ 1]{Asterisque_86} and \cite[\S9.3.3, 10.3, 10.4]{Qing_Liu_Algebraic_Geometry}.
	
	Let $Y'/k$ be a smooth proper curve with field of rational functions $K'$. To say that $C_{K'}$ has semistable reduction means there is a projective surface $X' / k$ and a proper flat morphism $f: X' \to Y'$ so that
	\begin{itemize}
		\item $f$ has generic fiber isomorphic to $C_{K'}$.
		\item The fibers of $f$ are connected and reduced with only nodal singularities. 
		\item If $Z$ is an irreducible component of a fiber and $Z \cong \PP^1$, then $Z$ meets the other 
		components of the fiber in at least $2$ points.
	\end{itemize}
If we assume further that $X'$ is a smooth surface (over $k$), then such a morphism $f: X' \to Y'$ is unique up to canonical isomorphism, and it may be characterized as the \textbf{minimal regular model} of $C_{K'}$ over $Y'$. 

	We divide the fiber singularities of $f$ into different types as follows. Choose a point $y \in Y'(k)$. The \textbf{partial normalization} of the fiber $f^{-1}(y)$ at a node $p$ is the $k$-scheme given by resolving the singularity at $p$. We say $p$ is of \textbf{type~0} if the partial normalization at $p$ is connected. Otherwise, the partial normalization at $p$ has two connected components. If the fiber $f^{-1}(y)$ has arithmetic genus~$g$, then one component of the partial normalization has arithmetic genus~$i$ and the other has arithmetic genus~$g-i$. We may assume $i \leq g-i$. In this case we say the node $p$ is of \textbf{type~$i$}. Let $\delta_i = \delta_i(X' / Y')$ be the total number of nodes of type $i$ in all fibers. By uniqueness of the minimal regular model, the numbers $\delta_i$ are well-defined invariants of $C_{K'}$.

\newpage

\begin{thm} \label{Thm: Main Theorem}
	Let $K$ be a function field of characteristic zero over the algebraically closed constant field $k$.
	Let $C/K$ be a smooth proper geometrically connected curve of genus $g$ with $2 \leq g \leq 4$. 
	Suppose that $K'$ is a finite extension of $K$ over which $C_{K'}$ has semistable reduction. Define
	$Y'$ to be the smooth curve with function field $K'$, $f: X' \to Y'$ the minimal regular (semistable) model
	of $C_{K'}$, and $d = [K':K]$. Then
		\benn
			\inf_{D \in \Div^1\left(\alg{C}\right)}a'(D) \geq  
				\begin{cases} {\displaystyle \ \ \ \frac{3}{d(g-1)}} & \text{if $f$ is smooth} \\
						{\displaystyle \frac{1}{2d(2g+1)}\left(c(g) \delta_0
					+\sum_{i \in (0, g/2]} \frac{2i(g-i)}{g}\delta_i \right)} & \text{unconditionally}
				\end{cases}
		\eenn
	with $c(2)=\frac{1}{27}$, $c(3)= \frac{2}{81}$, and $c(4) = \frac{1}{36}$. In particular, the Geometric
	Bogomolov Conjecture is true for curves of genus at most $4$.
\end{thm}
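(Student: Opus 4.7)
By the remark following Theorem~\ref{Thm: Main Theorem Imprecise}, $a'(D) \geq \tfrac{1}{2}a'(\xi)$ with $\xi = K_C/(2g-2)$, so it suffices to produce a positive lower bound for $a'(\xi)$. The plan is to apply Zhang's admissible-pairing theorem \cite[Thm.~5.6]{Zhang_Admissible_Pairing_1993}, which, up to the factor $1/d$ accounting for $d = [K':K]$, identifies $a'(\xi)$ with an explicit positive multiple of the self-intersection $(\omega_a, \omega_a)$ of the admissible relative dualizing sheaf on the minimal regular semistable model $f \colon X' \to Y'$. Lower-bounding $(\omega_a, \omega_a)$ in terms of the invariants $\delta_0, \delta_1, \ldots, \delta_{\lfloor g/2 \rfloor}$ is then the remaining task, and the two cases of the theorem correspond to two distinct approaches.

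If $f$ is smooth there are no singular fibers, so $(\omega_a, \omega_a) = \omega_{X'/Y'}^2$, which is a positive integer precisely because $C$ is non-constant. The slope inequality for non-isotrivial smooth families of stable curves over a smooth base (a consequence of Noether's formula together with the positivity of $\chi(\mathcal{O}_{X'})$) sharpens this to $\omega_{X'/Y'}^2 \geq 3$, giving the first case of the theorem. In the general case, $(\omega_a, \omega_a)$ decomposes as a sum $\sum_y \epsilon(G_y)$ of non-negative invariants, one for each closed point $y \in Y'$ with singular fiber, where $G_y$ is the vertex-weighted dual graph with unit edge lengths (using that $X'$ is regular). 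A defining property of Zhang's canonical measure is that each bridge edge of type $i \in [1, g/2]$ in $G_y$ contributes exactly $\tfrac{2i(g-i)}{g}$ to $\epsilon(G_y)$, independently of the rest of the graph, and summing over $y$ accounts for the bridge terms $\sum_{i \geq 1} \tfrac{2i(g-i)}{g}\delta_i$ in the bound. What remains is the combinatorial inequality
\[
\epsilon(H) \geq c(g)\cdot \#E(H)
\]
for every 2-edge-connected weighted metric graph $H$ with unit edges and first Betti number at most $g \in \{2,3,4\}$.

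The hardest part of the argument is this last combinatorial inequality, especially when $g = 4$. Because there are only finitely many 2-edge-connected topologies of first Betti number $\leq g$ and on each one $\epsilon$ is an explicit rational function of the edge lengths, the required minimum is in principle accessible by elementary calculus; the efficient strategy is to prove the homogeneous refinement $\epsilon(H,\ell) \geq c(g)\sum_e \ell_e$ for arbitrary positive edge lengths $\ell$ by minimizing $\epsilon/\sum_e \ell_e$ on the simplex. For $g = 2$ only the banana and theta graphs appear and a short calculation yields $c(2) = 1/27$; for $g = 3$ the list grows to include $K_4$ minus an edge, the triple banana, and a handful of others, producing $c(3) = 2/81$. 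For $g = 4$, however, the list expands substantially — $K_4$, the $3$-prism, $K_{2,3}$, and several more configurations with up to six edges — and the main technical effort lies in enumerating these topologies, computing or estimating $\min(\epsilon/\sum_e \ell_e)$ on each, and identifying the extremal graph realizing $c(4) = 1/36$ while ruling out every competitor.
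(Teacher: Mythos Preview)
Your overall architecture is right in spirit --- reduce to bounding $(\omega_a,\omega_a)$ via Zhang, then prove a purely combinatorial inequality on reduction graphs --- but the central identification is wrong, and this breaks the argument. You write that ``$(\omega_a,\omega_a)$ decomposes as a sum $\sum_y \epsilon(G_y)$'', and then aim to bound $\epsilon$ from below. But the relation from \cite{Zhang_Admissible_Pairing_1993} is $(\omega_a,\omega_a) = \omega_{X'/Y'}^2 - \sum_y \epsilon_y$: the $\epsilon$-contributions are \emph{subtracted}, so a lower bound on $\epsilon$ goes the wrong way. The paper's proof rests instead on Zhang's much more recent inequality \cite[Cor.~1.3.2]{Zhang_Gross-Schoen_Cycles_2008},
\[
(\omega_a,\omega_a)\ \geq\ \frac{2g-2}{2g+1}\sum_{y}\varphi\!\left(\agam_y\right),
\]
where $\varphi$ is the invariant of \S\ref{Sec: Invariants}, not $\epsilon$. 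It is $\varphi$ that enjoys the additivity under pointed sums (Lemma~\ref{Lem: Decomp}) producing the exact bridge contributions $\tfrac{2i(g-i)}{g}\ell_i$, and it is the inequality $\varphi(\agam)\geq c(g)\,\ell(\Gamma)$ on irreducible cubic pm-graphs (Theorem~\ref{Thm: Graph Bounds}) that must be verified. Your combinatorial program --- enumerate $2$-edge-connected topologies of given Betti number and minimize on the simplex --- is close to what the paper does for $\varphi$ (reduce to finitely many $3$-regular irreducible graphs via continuity under edge contraction, then bound $\omega_1/\eta^2$ by $A\,\sigma_3/\sigma_2$ and check coefficient positivity by computer), but aimed at the wrong functional it cannot yield the stated constants.

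Two smaller points. In the smooth case Par\v{s}in gives $\omega_{X'/Y'}^2\geq 12$, not $3$; combined with $a'(D)\geq (\omega_a,\omega_a)/4(g-1)$ (applied directly, without first passing through $a'(\xi)$ and losing a factor of $2$) this gives $3/(d(g-1))$. And your graph lists need adjustment: the reduction is to \emph{cubic} (i.e.\ $3$-regular) irreducible graphs, so e.g.\ $K_{2,3}$ is not a candidate; for $g=4$ there are exactly five such graphs (Figure~\ref{Fig: All Graphs}), including $K_4$ and $K_{3,3}$.
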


	Results of this type have been known for about a dozen years now, and the novelty of the present paper is twofold. First, it gives an algorithm for verifying the Geometric Bogomolov Conjecture for all curves of a fixed genus. Second, the method given here seems to admit a generalization, although we do not yet completely understand it. We are in a position to make the following strong effectivity conjecture, which we do not claim is in any way optimal:
	
\begin{conjecture}
	Let $C/K$ be a smooth proper geometrically connected curve of genus $g \geq 2$ that
	admits semistable reduction over $K$. Then
		\benn
			\inf_{D \in \Div^1\left(\alg{C}\right)}a'(D) \geq  \frac{1}{2(2g+1)}\left( \frac{g-1}{27g}\delta_0
					+\sum_{i \in (0, g/2]} \frac{2i(g-i)}{g}\delta_i \right).
		\eenn 
\end{conjecture}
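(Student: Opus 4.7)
The plan is to lift the case-by-case verification of Theorem~\ref{Thm: Main Theorem} to arbitrary genus by isolating the underlying inequality on metric graphs and proving it uniformly. Since $C$ is already assumed to have semistable reduction over $K$, I would begin by invoking Zhang's decomposition of the admissible self-intersection pairing: bounding $a'(D)$ below via \cite[Thm.~5.6]{Zhang_Admissible_Pairing_1993} reduces to bounding Zhang's $\epsilon$-invariant $\epsilon(G_y)$ of the reduction graph $G_y$ at each closed point $y \in Y$, summed over $y$. The conjecture then follows from the universal inequality
\benn
	\epsilon(G) \geq \frac{g-1}{27g}\,\ell_0(G) + \sum_{i \in (0, g/2]} \frac{2i(g-i)}{g}\,\ell_i(G),
\eenn
where $\ell_i(G)$ is the total length of the type-$i$ edges of a semistable metric graph $G$ of genus $g \geq 2$. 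In the integral semistable setting, each node contributes an edge of length one, recovering the $\delta_i$ of the statement.

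Next, I would separate bridges from non-bridges. Zhang's framework already gives the coefficient $2i(g-i)/g$ for a type-$i$ bridge with equality, and $\epsilon$ decomposes additively when a bridge is contracted to a wedge; this reduces the problem to the purely combinatorial/potential-theoretic assertion
\benn
	\epsilon(G) \geq \frac{g-1}{27g}\,\ell(G)
\eenn
for every $2$-edge-connected metric graph $G$ of first Betti number $g \geq 2$. Since $\epsilon/\ell$ is scale-invariant, one may normalize $\ell(G)=1$.

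The third step is to minimize $\epsilon(G)/\ell(G)$ over $2$-edge-connected metric graphs of genus $g$. On a fixed combinatorial type, edge lengths range in an open positive cone, and $\epsilon$ is an explicit rational function of these lengths computed from Zhang's canonical measure and the resistance Green's function --- equivalently, from the pseudoinverse of the weighted graph Laplacian, in the style of the potential theory of Baker--Rumely on metric graphs. The critical equations for an interior minimum cut out a finite algebraic set of extremizers, while boundary extrema correspond to an edge collapsing, yielding a graph of strictly smaller combinatorial complexity. The natural candidate extremizer is the banana graph --- two vertices joined by $g+1$ parallel edges --- which realizes the ratio $(g-1)/(27g)$ exactly in the cases $g=3,4$ settled by Theorem~\ref{Thm: Main Theorem} and is therefore the conjectured universal minimizer.

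The main obstacle is comparing combinatorial types: the number of $2$-edge-connected stable graphs of genus $g$ grows rapidly with $g$, so a verification type-by-type in the style of Theorem~\ref{Thm: Main Theorem} is infeasible for $g \geq 5$. What is needed is a monotonicity or \emph{pruning} principle --- for instance, an inequality asserting that $\epsilon(G)/\ell(G)$ does not increase under splitting a $2$-edge cut or contracting a suitable family of edges --- strong enough to reduce every $2$-edge-connected genus-$g$ graph to a banana. Identifying such a principle, or an equivalent dimension-reduction argument that tames the combinatorial zoo, is where genuinely new ideas are required; this is both the essential content of the conjecture and the reason the case-by-case method of Theorem~\ref{Thm: Main Theorem} does not yet extend beyond $g = 4$.
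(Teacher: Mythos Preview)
The statement you are attempting to prove is a \emph{conjecture} in the paper, not a theorem; the paper establishes it only for $g \leq 4$ and explicitly leaves the general case open. Your proposal correctly recognizes this and ends by naming the missing ingredient, so it is a strategy sketch rather than a proof. That said, the sketch contains a substantive misidentification that would derail any attempt to execute it.

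The central error is the invariant. You write that Zhang's \cite[Thm.~5.6]{Zhang_Admissible_Pairing_1993} reduces the problem to bounding the $\epsilon$-invariant $\epsilon(G_y)$, and then formulate the target graph inequality for $\epsilon$. That is not what happens in the paper. Theorem~5.6 of \cite{Zhang_Admissible_Pairing_1993} bounds $a'(D)$ below by $(\omega_a,\omega_a)/4(g-1)$; a \emph{separate} result, \cite[Cor.~1.3.2]{Zhang_Gross-Schoen_Cycles_2008}, then bounds $(\omega_a,\omega_a)$ below by $\frac{2g-2}{2g+1}\sum_y \Zphi{\agam_y}$. The graph invariant that must satisfy the inequality $\geq c(g)\ell_0 + \sum_i \frac{2i(g-i)}{g}\ell_i$ is $\varphi$, not $\epsilon$; this is Conjecture~\ref{Conj: Zhang} (equivalently Conjecture~\ref{Conj: Big Conjecture} with $c(g)=(g-1)/27g$). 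The invariants are related by Corollary~\ref{Cor: No tau}, $\varphi = \frac{5g-2}{4(g-1)}\epsilon - \frac{3}{8(g-1)}r(K,K) - \frac{1}{4}\ell$, so a lower bound on $\epsilon$ alone does not yield one on $\varphi$. The additivity and bridge-coefficient facts you invoke (your second paragraph) are properties of $\varphi$ (Lemmas~\ref{Lem: Additivity} and~\ref{Lem: Decomp}), not of $\epsilon$.

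Your extremizer claim is also unsupported. You assert that the banana graph on two vertices and $g+1$ edges realizes $\varphi/\ell = (g-1)/27g$ ``exactly in the cases $g=3,4$.'' The paper says the opposite: the constants $c(3)=2/81$ and $c(4)=1/36$ arise from the wasteful symmetric-function estimate in Proposition~\ref{Prop: Verify}, and the remark following Theorem~\ref{Thm: Graph Bounds} states explicitly that $c(3)$ and $c(4)$ are \emph{not} sharp (with evidence that $c(3)=17/288$ may be the true value). For $g=2$ the sharp constant is $1/27$, not $(g-1)/27g = 1/54$. So there is no evidence the banana graph is the minimizer, and your proposed reduction target (``prune to a banana'') is not the right one.

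The architecture you describe---reduce to a graph inequality for the correct invariant, strip off bridges via additivity, then minimize over $2$-edge-connected metric graphs of genus $g$---is exactly the paper's framework (\S\ref{Sec: Red to Graphs} and \S\ref{Sec: Reductions}), with the further reduction to irreducible \emph{cubic} graphs via edge-contraction continuity (Proposition~\ref{Prop: Continuity}, Proposition~\ref{Reduce conjecture}). Once you replace $\epsilon$ by $\varphi$ and drop the banana-graph ansatz, your outline coincides with the paper's, and the open problem is precisely the one you name at the end.
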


	Now let us summarize the previous results relevant to the conjecture. For simplicity, let us assume that $C$ has semistable reduction over $K$ (so that $d = 1$ in the theorem) and that $f:X \to Y$ is the minimal regular model. 
	
		\begin{enumerate}
		
			\item (Par\v{s}in, \cite{Parsin_Algebraic_Curves_FF_1968}) For any $g \geq 2$, if $f$ is smooth
				then the relative dualizing sheaf $\omega_{X/Y}$ is ample and its self-intersection number
				satisfies $\omega_{X/Y} . \omega_{X/Y} \geq 12$. By 
				\cite[Thm.~5.6]{Zhang_Admissible_Pairing_1993} this implies
					\[
						\inf_{D \in \Div^1\left(\alg{C}\right)}a'(D) \geq \frac{3}{g-1}.
					\]
				(See also Theorem~\ref{Thm: Admiss Pairing} below and the subsequent discussion.)
			\item (Moriwaki, \cite{Moriwaki_Bogomolov_Genus_2_1996}) If the genus of $C$ is $2$, then
				$f$ is not smooth and
				\[
					\inf_{D \in \Div^1\left(\alg{C}\right)}a'(D) \geq \frac{1}{270}\left(\delta_0 
						+ \delta_1\right).
				\]
				
			\item (Moriwaki, \cite{Moriwaki_Bogomolov_Irreducible_Fibers_1997,
				Moriwaki_Bogomolov_Stable_Trees_1998}) If the dual graph of each
				closed fiber of the stable model of $f$ consists of a tree with loop edges attached, then
				\[
					\inf_{D \in \Div^1\left(\alg{C}\right)}a'(D) \geq \frac{1}{2(2g+1)} 
							\left( \frac{g-1}{6g}\delta_0 + \sum_{i=1}^{[g/2]}
								\frac{2i(g-i)}{g}\delta_i \right).
				\]
			
			\item (Yamaki, \cite{Yamaki_Bogomolov_Hyperelliptic_2008}) If $C$ is hyperelliptic of genus
				$g \geq 3$ and the hyperelliptic involution $\iota$ extends to the family $f$, then
		
				{\small
				\benn
					\ba
						\inf_{D \in \Div^1\left(\alg{C}\right)}a'(D) &\geq \frac{1}{4g(2g+1)} 
							\bigg[\frac{2g-5}{12}\xi_0 
						 + \sum_{j=1}^{[(g-1)/2]} \Big\{ (g-1-j) - 1 \Big\}\xi_j \\ &
						 \qquad - \alpha(g) \Big\{\delta_0 - \xi_0 \Big\}
						 + \sum_{i=1}^{[g/2]}4i(g-i)\delta_i\Bigg],			
					\ea	
				\eenn
				}
				where $\xi_0 = \xi_0(X/Y)$ is the number of nodes of type~0 that are fixed by the involution and
				$\xi_j = \xi_j(X/Y)$ is the number of \textit{pairs} of nodes $\{p, \iota(p)\}$ of type $0$ 
				not fixed by the involution and such that the partial normalization at $\{p, \iota(p)\}$ yields a
				 curve of genus $j$ and a curve of genus $g-1-j$. The constant $\alpha(g)$ is to be 
				 interpreted as $0$ when $g=3,4$ and as $\frac{2g-1}{3}$ when $g \geq 5$. (This follows
				 from Yamaki's formulas upon noting that $\delta_0 - \xi_0 = \sum_{j=1}^{[(g-1)/2]} \xi_j$, 
				 by definition.)
				 
			\item (Yamaki, \cite{Yamaki_Bogomolov_Genus_3_2002}) If the genus of $C$ is $3$ and 
				$C$ is not hyperelliptic, then 
				\[
					\inf_{D \in \Div^1\left(\alg{C}\right)}a'(D) \geq  \frac{1}{792} \delta_0
					+ \frac{1}{6} \delta_1.
				\]
				
			\item (Gubler, \cite{Gubler_Bogomolov_2007}) If the Jacobian of $C$ has totally 
				degenerate reduction over $y \in Y(k)$, then the Geometric Bogomolov Conjecture holds 
				for $C$. This hypothesis is equivalent to saying the first Betti number of the reduction graph 
				equals $g$, the genus of $C$.\footnote{
				This equivalence does not seem to be written anywhere in the literature, so
				we give a brief sketch of its proof at the request of the referee. The genus of $C$ agrees with 
				the genus of its reduction graph 
				at $y$, which is the first Betti number of the graph {\it plus} the sum of the 
				geometric genera of the irreducible components of the fiber of $X$ over $y$. (See
				\S\ref{Sec: Main Definitions} and \S\ref{Sec: Red to Graphs}, especially 
				Proposition~\ref{Prop: pm-graph}.) To show that the first Betti
				number of the reduction graph equals the genus of $C$ is then equivalent to asserting all of the 
				irreducible components of the fiber of $X$ over $y$ are rational. But
				to say that the Jacobian of $C$ has totally degenerate reduction over $y$ is equivalent to saying 
				the abelian rank of the special fiber of the identity component of its N\'eron model is zero, which 
				in turn is equivalent to saying that every irreducible component of the fiber of $X$ over $y$ is 
				rational. These facts about the Jacobian can be deduced 
				from \cite[expos\'e $n^{\circ}$ 1, p.28]{Asterisque_86}.
				}
		\end{enumerate}
		
	For genus $2$ curves, we obtain the same coefficient on $\delta_0$ as Moriwaki. 
	This can be explained by the fact that every 
	curve of genus $2$ is hyperelliptic, and one of the key inequalities used to prove our result becomes an 
	equality for hyperelliptic curves. (It is the second half of inequality \eqref{Eqn: Main Inequalities}.) For 
	genus $3$ curves, we obtain a lower bound of $\frac{1}{567}\delta_0 + \frac{2}{21}\delta_1$, 
	which is in general neither stronger nor weaker than Yamaki's result. One can recover the result (c) above using
	the computation in \cite[Prop.~4.4.3]{Zhang_Gross-Schoen_Cycles_2008} and the method of this paper.
	
\begin{remark}
	In all but one of the related articles of Moriwaki and both of the articles of Yamaki, they use a slightly different 
	embedding of the curve into its Jacobian and a different measure of finiteness than we do. At first glance, the
	results in their papers will look quite different than as stated here.
\end{remark}

\subsection{Idea of the Proof}

	Now we sketch the proof of Theorem~\ref{Thm: Main Theorem} in order to motivate the layout of the paper. The detailed proof will be given in \S\ref{Sec: Red to Graphs}. 
	
	Assume for simplicity that $C$ has semistable reduction over $K$, and let $f: X \to Y$ be the minimal regular model of $C$. If $f: X \to Y$ is smooth, then the result is essentially due to Par{\v{s}}in, so we may assume that $f$ has at least one singular fiber. We use the following inequalities proved by Zhang:
	\be \label{Eqn: Main Inequalities}
		a'(D) \geq \frac{(\omega_a, \omega_a)}{4(g-1)} 
			\geq \frac{1}{2(2g+1)} \sum_{y \in Y(k)} \Zphi{\agam_y},
	\ee
where $\omega_a$ is the canonical sheaf of $C$ equipped with the admissible adelic metric in the sense of Zhang, $(\omega_a, \omega_a)$ is the admissible pairing of $\omega_a$, and $\Zphi{\agam_y}$ is a graph invariant associated to the reduction graph $\agam_y$ of the fiber $f^{-1}(y)$. Recall that the reduction graph has vertices in bijection with the irreducible components of $f^{-1}(y)$ and a segment of length~1 between two vertices for every node shared by the  two corresponding irreducible components. If an irreducible component of $f^{-1}(y)$ has nodal self-intersections, then the corresponding vertex of the reduction graph is joined to itself by loops of length~1. For example, $\agam_y$ is a point if and only if $f$ is smooth over $y$, and in this case $\Zphi{\agam_y}= 0$. As $f$ is generically smooth, the above sum over points of $Y$ is finite. To prove Theorem~\ref{Thm: Main Theorem}, it suffices to produce lower bounds for $\Zphi{\agam_y}$ for all possible (polarized) metric graphs that can arise from fibers of $f$. We state such a lower bound
as Theorem~\ref{Thm: Graph Bounds} below.

	The majority of this paper is devoted to exhibiting lower bounds for the invariants $\Zphi{\agam_y}$. In \S\ref{Sec: Main Definitions}--\ref{Sec: Invariants} we gather the definitions and basic facts on metric graphs used to state and prove such bounds. In \S\ref{Sec: Red to Graphs} we give a more complete treatment of the argument sketched above in order to pass from a statement about algebraic curves to a statement about graph invariants. In \S\ref{Sec: Rapid} we provide explicit formulas for computing $\Zphi{\agam}$ that allow one to implement its calculation in a computer algebra package. In \S\ref{Sec: Continuity}--\S\ref{Sec: Strategy} we reduce the proof of the theorem to a handful of explicit calculations. These calculations are immensely complicated, albeit not very difficult. We perform them in \textit{Mathematica} \cite{Mathematica_6.0.1.0} and summarize the conclusions in \S\ref{Sec: Data}. 
\begin{withcode} Finally, we reproduce the computer code and \textit{Mathematica} notebooks used for the computations. \end{withcode}
\begin{withoutcode} The \textit{Mathematica} code and notebooks have been reproduced at the end of the arXiv edition of this article \cite{Faber_EGBC_arXiv_2008}. \end{withoutcode}

	\textbf{Acknowledgments.} My teacher, Shou-wu Zhang, deserves my warmest thanks for suggesting this project and for having an open door during its progression. Thanks also go to Xinyi Yuan for showing me the inequalities used in the proof of Proposition~\ref{Prop: Verify}, to Matt Baker for helpful feedback on the exposition, and to the anonymous referee for suggesting various technical and stylistic corrections. This research was partially supported by the NSF through Shou-wu Zhang's grant DMS-070322.


\section{Polarized Metric Graphs and Their Invariants}
\label{Sec: Definitions}

\subsection{Polarized Metric Graphs}
\label{Sec: Main Definitions}

	In order to fix terminology, we give a very basic introduction to polarized metric graphs. For a more complete discussion of this topic, see \cite{Baker_Faber_2006, Baker_Rumely_HAOMG_2007}. A \textbf{metric graph} $\Gamma$ is a compact connected metric space for which each point $p$ admits a neighborhood isometric to one of the form
	\[
		U_{r,v} = \{te^{2\pi ik/v} \in \CC: 0 \leq t < r, k=1, \ldots, v\},
	\]
for some positive real number $r$ and positive integer $v$, where we endow $U_{r,v}$ with the path metric. The number $v=v(p)$ is called the \textbf{valence} of the point $p$. For convenience, we also allow the topological space consisting of a single point to be called a metric graph; in this case, $v(p) = 0$. 

	When working with metric graphs, it is often easier to ``discretize'' and work with classical combinatorial objects. The following two definitions are slightly non-standard, but they will simplify our discussion a great deal. A \textbf{combinatorial graph} is a connected multigraph; i.e., a connected graph, possibly with multiple or loop edges. If $G$ is a combinatorial graph, we write $V(G)$ and $E(G)$ for the vertex and edge sets, respectively. A \textbf{weighted graph} is a combinatorial graph along with a function $\ell: E(G) \to \RR_{>0}$. We call $\ell(e)$ the \textbf{length} of the edge $e$. 
	
	Any weighted graph $G$ gives rise to a metric graph $\Gamma$ first by building a $1$-dimensional CW-complex using the vertex and edge sets as gluing data. An edge $e$ of $G$ induces a \textbf{segment} $e \subset \Gamma$, which we equip with the Lebesque measure of total mass $\ell(e)$. The metric on $\Gamma$ is defined to be the induced path metric. Conversely, given a metric graph $\Gamma$, one obtains a weighted graph $G$ as follows. Let $V(G)$ be any nonempty finite subset of $\Gamma$ containing all points of valence different from $2$. Two vertices $p_1$ and $p_2$ of $G$ are joined by an edge if there is a path in $\Gamma$ from $p_1$ to $p_2$ that does not pass through any other vertices. Any weighted graph constructed in this way from $\Gamma$ is called a \textbf{model} of $\Gamma$.
	
	For any model $G$ of a metric graph $\Gamma$, we have the equality
	\be \label{Eq: Betti number}
		\dim_{\RR} H^1(\Gamma, \RR) = \#E(G) -\#V(G) + 1.
	\ee
This formula can be proved by counting the edges in a spanning tree for $G$. The number $b_1 = \dim_{\RR} H^1(\Gamma, \RR)$ is called the \textbf{first Betti number} of $\Gamma$. 

	A metric graph is \textbf{irreducible} if it cannot be disconnected (as a topological space) by deleting any single point. A metric graph $\Gamma$ is \textbf{cubic} if the following two conditions hold:
		\bi
			\item There exists $p \in \Gamma$ with valence $3$.
			
			\item Every $p \in \Gamma$ has valence equal to $2$ or $3$. 
		\ei
This definition implies that a cubic metric graph $\Gamma$ admits a $3$-regular model in the sense of combinatorial graph theory. Indeed, take the vertex set $\{p \in \Gamma: v(p) =3\}$. (Recall that a combinatorial graph is $3$-regular if every vertex has valence~3.)

	Let $\Gamma$ be a metric graph and choose a model $G$. Each segment $e$ is isometric to either a closed interval or a circle of length $\ell(e)$. If $e$ is isometric to a segment, a choice of isometry $e \simarrow [0, \ell(e)]$ gives an orientation on $e$. We write $e^-$ and $e^+$ for the vertices corresponding to $0$ and $\ell(e)$, respectively, under the chosen isometry. If $e$ is instead isometric to a circle, we define $e^- = e^+$ to be the unique marked vertex on $e$. We define the \textbf{total length} of $\Gamma$ to be $\ell(\Gamma) = \sum_e \ell(e)$. Evidently it does not depend on the choice of model. 
	
	A \textbf{polarized metric graph}, or \textbf{pm-graph}, consists of the data of a metric graph $\Gamma$ and a vertex weight function $q: \Gamma \to \ZZ$ with the following properties:
	\bi
		\item $q$ has finite support in $\Gamma$; 
		\item $q(x) \geq 0$ for all $x \in \Gamma$;
		\item The \textbf{canonical divisor} $K := \sum_{p \in \Gamma} \left( 2q(p) +v(p) - 2 \right). [p]$
			is effective.
	\ei
By way of notation, we will write $\agam = (\Gamma, q)$ for a pm-graph. If $\Gamma = \{p\}$ is a single point, then $K$ is effective if and only if $q(p) \geq 1$. Otherwise, it suffices to focus only on points of valence~$1$ to check that $K$ is effective. 

	The \textbf{genus} of $\agam$ is defined to be
	\be \label{Eqn: Genus}
		\Zg{\agam} = \dim_{\RR} H^1(\Gamma, \RR) + \sum_{p \in \Gamma} q(p).
	\ee
We will abbreviate by $g = \Zg{\agam}$ when there is no possibility of confusion.  It is easy to see that the degree of the canonical divisor is  $\deg(K) = 2g - 2$.

	Let $\agam = (\Gamma, q)$ be a pm-graph. A \textbf{model} of $\agam$ is a model of the metric graph $\Gamma$ that contains the support of $q$ in its vertex set. We will call $\agam$ \textbf{irreducible} (resp. \textbf{cubic}) if its underlying metric graph has this property. 
	
	With $\agam$ as in the last paragraph, choose a point $p \in \Gamma$ of valence~$2$ outside the support of the function $q$. All but finitely many points of $\Gamma$ have these two properties. We wish to define the \textbf{type} of the point $p$ to be an integer in the interval $[0, g/2]$. The type of $p$ will determine how it contributes to the invariant $\Zphi{\agam}$. (See Lemma~\ref{Lem: Decomp}.) We say $p$ is of \textbf{type~0} if $\Gamma \smallsetminus \{p\}$ is connected. Otherwise we may write $\Gamma = \Gamma_1 \cup \Gamma_2$ for some metric subgraphs $\Gamma_1$ and $\Gamma_2$ such that $\Gamma_1 \cap \Gamma_2 = \{p\}$. While the data $ (\Gamma_j, q|_{\Gamma_j})$ does not define a pm-graph, the definition of genus~\eqref{Eqn: Genus}  still makes sense. Moreover, one sees that
	\[
		g = \Zg{\agam} = g\left(\Gamma_1, q|_{\Gamma_1}\right) +g\left(\Gamma_2, q|_{\Gamma_2}\right), 
	\]
so that one of these two integers lies between 1 and $g/2$, inclusive. Define $p$ to be a point of \textbf{type $i$}, where $i$ is the minimum of $g\left(\Gamma_1, q|_{\Gamma_1}\right)$ and $g\left(\Gamma_2, q|_{\Gamma_2}\right)$. See Figure~\ref{Fig: Types} for an example.

\begin{figure}[htb]
	\begin{picture}(100,48)(20,-5)

		\put(0,0){\includegraphics{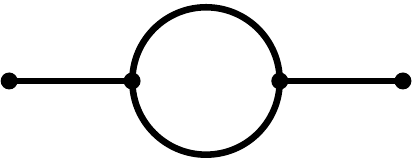}}
		\put(1,9){$1$}
		\put(115,9){$2$}
		\put(20, 28){$e'$}
		\put(57.5, 6){$e$}
		\put(95, 28){$e''$}
		
	\end{picture}
	
	\caption{Here we have a pm-graph $\agam = (\Gamma, q)$ of genus~4. The indices $1$ and $2$ are
	the values of the function $q$ at those vertices, and we suppose $q$ is zero at all other points. A point $p$ in 
	the interior of segment $e$ has type~0 as $\Gamma \smallsetminus \{p\}$ is connected. If $p$ lies in the interior
	of $e'$ (resp. $e''$), then it has type~1 (resp. type~2) since it splits the graph into a piece of genus~1 and a
	piece of genus~3 (resp. two pieces of genus~2).}
	\label{Fig: Types}
\end{figure}

	Finally, define $\ell_i\left(\agam\right)$ to be the total length of all points of type $i$ (in the sense of Lebesgue measure). Evidently $\ell(\Gamma) = \sum_i \ell_i\left(\agam\right)$. (Note that $\ell_i\left(\agam\right)$ for $i > 0$ is an invariant of the pm-graph $\agam$, while $\ell_0\left(\agam\right)$ and $\ell(\Gamma)$ are metric invariants.)
	
	
\subsection{Admissible Measures}
\label{Sec: Admissible Stuff}

	For this section, fix a pm-graph $\agam = (\Gamma, q)$. We now give a very brief description of the theory of admissible measures; for more complete references see \cite{Baker_Rumely_HAOMG_2007, Zhang_Admissible_Pairing_1993}. 
	
	There is a measure-valued Laplace operator on $\Gamma$. Let $f$ be a continuous function that is $C^2$ outside of some vertex set for $\Gamma$, and such that $f''$ is square-integrable against the Lebesgue measure on $\Gamma$. The \textbf{Laplacian} of $f$ is defined as
	\[
		\Delta_x(f) = -f''(x) dx - \sum_{p \in \Gamma} \sigma_p(f) \delta_p(x),
	\]
where $dx$ is the Lebesgue measure on $\Gamma$, $\delta_p$ is the point mass at $p$, and $\sigma_p(f)$ is the sum over all tangent directions at $p$ of the outward pointing derivatives of $f$. One of the key properties we will use is that $\Delta_x(f) = 0$ if and only if $f$ is a constant function. 

	 For any signed Borel measure $\nu$ on $\Gamma$ of total mass~$1$, the \textbf{Green's function} with respect to $\nu$ is the unique function $g_{\nu}: \Gamma \times \Gamma \to \RR$ satisfying the following three properties:
	\begin{enumerate}
		\item $g_{\nu}(x,y)$ is continuous on $\Gamma \times \Gamma$ and symmetric in $x$ and $y$.
		\item  $\Delta_x g_{\nu}(x,y) = \delta_y - \nu$ for each $y \in \Gamma$.
		\item  $\int_\Gamma g_\nu(x,y) d\nu(x) = 0$ for each $y \in \Gamma$.
	\end{enumerate}

	Here and in what follows it will be convenient to write $f(D) = \sum_{p \in \Gamma} n_p f(p)$ if $f: \Gamma \to \RR$ is a function and $D = \sum_{p \in \Gamma} n_p [p]$ is a divisor on $\Gamma$. As is customary for divisors, we suppose $n_p=0$ for all but finitely many $p \in \Gamma$.

	There exist measures whose Green's functions satisfy extra properties. For any divisor $D$ on $\Gamma$ with $\deg(D) \not= -2$, there exists a unique Borel measure $\mu_D$ of total mass~1 and a unique constant $a = a\left(\agam, D\right)$ such that
	\be \label{Eqn: Admissible Constants}
		g_{\mu_D}(x,x) + g_{\mu_D}(x, D) = a, \quad x \in \Gamma.
	\ee
The measure $\mu_D$ is called the \textbf{admissible measure} with respect to the divisor $D$. Of particular interest to us is the case $D=K$, where $K$ is the canonical divisor on $\agam$. Note that $\deg K \not= -2$ as $K$ is effective for any pm-graph. We will write $\mu = \mu_K$ for the duration of this article.
		
	The \textbf{effective resistance function} $r(x,y)$ --- familiar from circuit theory ---  allows us to give an explicit formula for the measure $\mu_D$. By definition, $r(y,z) = j_z(y,y)$, where $f(x) = j_z(x,y)$ is the fundamental solution to the Laplace equation 
	\[
		\Delta_x f = \delta_y - \delta_z, \quad f(z)=0. 
	\]
Let $D$ be a divisor of degree different from $-2$ and suppose $G$ is a model of $\Gamma$ containing the support of the divisor $D$. Let $r(e)$ be the effective resistance between the endpoints of the segment $e$, and let $F(e) = 1 - r(e)/\ell(e)$. Then we have
		\benn
			\mu_D = \frac{1}{\deg(D) + 2} \left\{ \delta_D - \sum_{p \in V(G)}\left( v(p) - 2\right) + 2 
				\sum_{e \in E(G)} F(e) \frac{dx|_e}{\ell(e)} \right\},
		\eenn
In particular, if $g = \Zg{\agam} \geq 1$ is the genus of $\agam$ and $G$ is a model of $\agam$, then 
	\benn
		\mu = \mu_K = \frac{1}{g}\left\{ \sum_{p \in \Gamma} q(p)\delta_p + \sum_{e \in E(G)} 
				F(e) \frac{dx|_e}{\ell(e)}\right\}.
	\eenn


\subsection{The Invariant $\Zphi{\agam}$}
\label{Sec: Invariants}

	Define three invariants associated to a pm-graph $\agam$ of genus $g \geq 1$:
	\begin{eqnarray*}
			\Zepsilon{\agam} & = & \dint r(x,y) \delta_K(x) d\mu(y) \\
			\Ztau{\agam} & = &  \frac{1}{2}\dint r(x,y) d\mu(x) d\mu(y) \\
			\Zphi{\agam} & = & 3 g \Ztau{\agam}
				- \frac{1}{4}\left(\Zepsilon{\agam} + \ell(\Gamma) \right).
	\end{eqnarray*}

\begin{remark}
	We have chosen to use the notation $\Ztau{\agam}$ instead of $\tau\left(\agam\right)$ as is favored in 
	\cite{Zhang_Gross-Schoen_Cycles_2008}. This is to avoid confusion with the closely related ``Tau-invariant'' 
	that appears in \cite{Baker_Rumely_HAOMG_2007, Cinkir_Thesis_2007}. One can show that 
	$\Ztau{\agam}$ agrees with the constant $a = a(\agam, K)$ appearing in \eqref{Eqn: Admissible Constants}. 
	Also, the invariant $\Zepsilon{\agam}$ is the same as the invariant $e_y$ appearing in
	 \cite{Moriwaki_Bogomolov_Genus_2_1996} and the invariant $\epsilon(G,D)$ appearing
	in \cite{Moriwaki_Bogomolov_Stable_Trees_1998, Yamaki_Bogomolov_Hyperelliptic_2008, 
	Yamaki_Bogomolov_Genus_3_2002}, although it requires a small computation to see it.
\end{remark}

\begin{remark}
	An important --- albeit trivial --- example is the case $\agam = (\Gamma, q)$ in which $\Gamma = \{p\}$ consists 
	of a single point. Then $\agam$ is a pm-graph if and only if $q(p) \geq 1$. It follows easily that all three of the
	 associated invariants $\Zepsilon{\agam}$, $\Ztau{\agam}$, and $\Zphi{\agam}$ are zero.
\end{remark}

	We now show that $\Ztau{\agam}$ may be removed from the definition of $\Zphi{\agam}$.
	
\begin{prop} \label{Prop: Tau in terms of epsilon}
	For any pm-graph $\agam$ of genus $g$, we have
		\benn
			\Ztau{\agam} = \frac{2g-1}{4g(g-1)} \Zepsilon{\agam} - \frac{r(K,K)}{8g(g-1)}.
		\eenn
\end{prop}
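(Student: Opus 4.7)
The plan is to express the effective resistance $r(x,y)$ in terms of the admissible Green's function $g_\mu(x,y)$ and then integrate against the appropriate measures. First I would recall the classical identity
\[
 r(x,y) = g_\mu(x,x) + g_\mu(y,y) - 2g_\mu(x,y),
\]
which follows by observing that $h(x) := g_\mu(x,y) - g_\mu(x,z)$ satisfies $\Delta_x h = \delta_y - \delta_z$, so it differs from the fundamental solution $j_z(x,y)$ by a constant; evaluating at $x=z$ pins down the constant and evaluating at $x=y$ yields the identity. Then I would use the defining property \eqref{Eqn: Admissible Constants} of the admissible measure, namely $g_\mu(x,x) = a - g_\mu(x,K)$ where $a = a(\agam,K)$, to rewrite
\[
 r(x,y) = 2a - g_\mu(x,K) - g_\mu(y,K) - 2g_\mu(x,y).
\]

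Next I would integrate this relation against the two relevant measures on $\Gamma \times \Gamma$, exploiting the orthogonality $\int_\Gamma g_\mu(x,y)\, d\mu(x) = 0$ that follows from property~(c) of the Green's function. Integrating against $d\mu(x)d\mu(y)$ kills all three $g_\mu$ terms, giving $\iint r\, d\mu\, d\mu = 2a$, which both verifies the remark that $\Ztau{\agam}=a$ and sets up the computation. Integrating against $d\delta_K(x)d\mu(y)$ instead, and using $\int d\delta_K = \deg K = 2g-2$, gives
\[
 \Zepsilon{\agam} = 2a(2g-2) - g_\mu(K,K),
\]
the two middle terms vanishing by the orthogonality. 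Finally, integrating against $d\delta_K(x)d\delta_K(y)$ yields
\[
 r(K,K) = 2a(2g-2)^2 - 2(2g-1)\, g_\mu(K,K).
\]

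I would then eliminate the unknown $g_\mu(K,K)$ between these last two equations: solving the second for $g_\mu(K,K)$ and substituting into the first produces a linear relation between $\Ztau{\agam}=a$, $\Zepsilon{\agam}$, and $r(K,K)$, which simplifies (after collecting the $2a(2g-2)\bigl[2(2g-1)-(2g-2)\bigr] = 8ag(g-1)$) to the claimed identity. There is no serious obstacle here; the only subtlety is being careful that $r(K,K)$ is the formal double-integral $\iint r(x,y)\,d\delta_K(x)\,d\delta_K(y)$ (so cross terms $r(p,q)$ with $p\neq q$ genuinely contribute, even though $r(p,p)=0$), and that the effective resistance identity holds with the diagonal values $g_\mu(x,x)$ included—both of which follow directly from the definitions in \S\ref{Sec: Admissible Stuff}.
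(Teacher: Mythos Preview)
Your argument is correct and complete: the identity $r(x,y)=g_\mu(x,x)+g_\mu(y,y)-2g_\mu(x,y)$ together with the admissible relation $g_\mu(x,x)=a-g_\mu(x,K)$ gives exactly the three integrated equations you state, and the elimination is clean.

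Your route is genuinely different from the paper's. The paper does not work with $g_\mu$ at all; instead it introduces the Green's function $g_0$ and canonical measure $\mu_0$ attached to the divisor $D=0$, invokes the relations $g_0(x,y)=-\tfrac12 r(x,y)+C$ and $\mu=\tfrac{1}{2g}(2\mu_0+\delta_K)$, and then builds an auxiliary function $f(x)=\tfrac12\int r(x,y)\,d\mu(y)-\tfrac{1}{4g}r(x,K)$, shows $\Delta_x f=0$ so $f\equiv A$, and finally extracts two linear equations by integrating $f$ against $\mu$ and against $\delta_K$. Your approach is more economical: it stays entirely within the $K$-admissible theory, needs no reference to $\mu_0$ or $g_0$, and replaces the ``harmonic auxiliary function'' step with the classical resistance identity and three direct integrations. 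The paper's approach, on the other hand, yields as a byproduct the pointwise statement that $\tfrac12\int r(x,y)\,d\mu(y)-\tfrac{1}{4g}r(x,K)$ is constant in $x$, which your method gives only after a further substitution. Either way, both proofs ultimately produce two linear relations among $a$, $\Zepsilon{\agam}$, and one remaining unknown ($g_\mu(K,K)$ for you, the constant $A$ for the paper), and solve.
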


\begin{proof}
	Let $g_0: \Gamma \times \Gamma \to \RR$ and $\mu_0$ be the admissible Green's function and the admissible measure associated to the divisor $D = 0$.\footnote{The object $\mu_0$ is called the \textbf{canonical measure} by Baker and Rumely \cite[\S14]{Baker_Rumely_HAOMG_2007}.} Recall that this means 
	\begin{enumerate}
		\item $g_0(x,y)$ is continuous on $\Gamma \times \Gamma$ and symmetric in $x$ and $y$.
		\item $\Delta_x g_0(x,y) = \delta_y - \mu_0$ for each $y \in \Gamma$.
		\item $\int_\Gamma g_0(x,y) d\mu_0(x) = 0$ for each $y \in \Gamma$.
		\item $g_0(x,x)$ is a constant independent of $x$.
	\end{enumerate}
Recall also that $\mu$ (with no subscript) is the admissible measure for the divisor $D = K$. By \cite[Thm.~14.1]{Baker_Rumely_HAOMG_2007} and the discussion in \S\ref{Sec: Admissible Stuff}, we have the following further properties of $\mu_0$ and $g_0(x,y)$:
	\begin{eqnarray}
		g_0(x,y) &=& - \frac{1}{2}r(x,y) + C, \text{ for some constant $C$}, \label{Eqn: g_0} \\
		\mu_0 &=& \frac{1}{2}\Delta_x \left(r(x,y)\right) + \delta_y, 
			\text{ for any $y \in \Gamma$}, \label{Eqn: mu_0}\\
		\mu & = & \frac{1}{2g}\left(2\mu_0 + \delta_K\right). \label{Eqn: mu vs mu_0}
	\end{eqnarray}

	Now define a new function $f: \Gamma \to \RR$ by  
	\benn
		f(x) = \frac{1}{2}\int_\Gamma r(x,y)d\mu(y) - \frac{1}{4g} r(x,K).	
	\eenn
We may rewrite $f$ using \eqref{Eqn: g_0} and \eqref{Eqn: mu vs mu_0} to get
	\benn
		\ba
			f(x) &= - \frac{1}{2g} \int g_0(x,y) \left(2\mu_0(y) + \delta_K(y)\right) + C - \frac{1}{4g}r(x,K) \\
				&= -\frac{1}{2g}g_0(x,K) + C - \frac{1}{4g} r(x,K), \qquad \text{by property~(c)}.
		\ea
	\eenn
Taking the Laplacian of both sides with respect to $x$ and applying property~(b) and equation~\eqref{Eqn: mu_0} gives
	\benn
		\Delta_x f = 
		-\frac{1}{2g}\left(\delta_K - (2g-2)\mu_0\right) - \frac{1}{4g}\left(2(2g-2)\mu_0 - 2\delta_K\right) = 0.
	\eenn
	
	This implies that $f$ is a constant, say $f \equiv A$. Integrating $f$ against $\mu$ gives
	\benn
		A = \int f(x) d\mu(x)  = \Ztau{\agam} - \frac{1}{4g}\Zepsilon{\agam}.
	\eenn
On the other hand, integrating against $\delta_K$ yields
	\benn
		(2g-2)A = f(K)  = \frac{1}{2}\Zepsilon{\agam} - \frac{1}{4g}r(K,K).
	\eenn
Solving these last two equations for $\Ztau{\agam}$ completes the proof.
\end{proof}

	Combining Proposition~\ref{Prop: Tau in terms of epsilon} with the definition of $\Zphi{\agam}$ immediately gives 
	
\begin{cor} \label{Cor: No tau}
	For any pm-graph $\agam$ of genus $g$, we have
		\benn
			\Zphi{\agam}  =  \frac{5g-2}{4(g-1)} \Zepsilon{\agam} 
				 - \frac{3}{8(g-1)}r(K,K) - \frac{1}{4}\ell(\Gamma).
		\eenn
\end{cor}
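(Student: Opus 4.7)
The plan is a direct algebraic substitution, with no new ideas required beyond what is already assembled just above the statement. By definition,
\[
\Zphi{\agam} = 3g\,\Ztau{\agam} - \tfrac{1}{4}\Zepsilon{\agam} - \tfrac{1}{4}\ell(\Gamma),
\]
and Proposition~\ref{Prop: Tau in terms of epsilon} gives a closed-form expression for $\Ztau{\agam}$ purely in terms of $\Zepsilon{\agam}$ and $r(K,K)$. So the strategy is simply to substitute the formula for $\Ztau{\agam}$ into this definition and collect terms.

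Concretely, I would first compute $3g\,\Ztau{\agam} = \dfrac{3(2g-1)}{4(g-1)}\Zepsilon{\agam} - \dfrac{3\,r(K,K)}{8(g-1)}$, where the factor of $g$ cancels against the $g$ in the denominators of the two fractions in Proposition~\ref{Prop: Tau in terms of epsilon}. Then I would combine the $\Zepsilon{\agam}$-coefficient from $3g\,\Ztau{\agam}$ with the $-\tfrac14\Zepsilon{\agam}$ coming from the definition of $\Zphi{\agam}$, giving
\[
\frac{3(2g-1)}{4(g-1)} - \frac{1}{4} = \frac{3(2g-1) - (g-1)}{4(g-1)} = \frac{5g-2}{4(g-1)}.
\]
The $r(K,K)$ term is unchanged and the $-\tfrac14\ell(\Gamma)$ term is unchanged, producing exactly the stated identity.

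There is no real obstacle here; the only thing to be mildly careful about is that all of these manipulations implicitly assume $g \geq 2$ (so that division by $g-1$ is permitted), which is harmless since the case $g=1$ of the genus formula together with an effective canonical divisor forces $\Gamma$ to be a single point or otherwise a degenerate situation not relevant to the main theorem, and the case where $\Gamma$ is a single point of positive weight was already observed to give $\Zepsilon{\agam} = \Ztau{\agam} = \Zphi{\agam} = 0$. So the corollary is a one-line consequence of Proposition~\ref{Prop: Tau in terms of epsilon} and the definition of $\Zphi{\agam}$.
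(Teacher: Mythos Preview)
Your proposal is correct and is exactly the approach taken in the paper: the paper simply remarks that combining Proposition~\ref{Prop: Tau in terms of epsilon} with the definition of $\Zphi{\agam}$ immediately gives the corollary. Your explicit algebra and your side remark about the implicit hypothesis $g\geq 2$ are accurate and add nothing beyond what the paper intends.
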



\section{Reducing the Proof of Theorem~\ref{Thm: Main Theorem} to Graph Theory}
\label{Sec: Red to Graphs}

	In this section we give the argument that allows us to pass from a statement about algebraic curves to one about polarized metric graphs. While the argument is not difficult, it is new to the literature and relies on a recent theorem of Zhang. As such we give a detailed description. The reader may want to look at the beginning of \S\ref{Sec: Statement} for references on semistable reduction and at \cite[pp.9-10]{Chinburg_Rumely_Capacity_Pairing_1993} for further details on the theory of reduction graphs.
	
\subsection{Reduction Graphs}
	
	Suppose $k$ is an algebraically closed field of characteristic zero and $K$ is the function field of a smooth curve over $k$. Let $Y$ be the proper smooth $k$-curve with function field $K$. 	The points $Y(k)$ are in bijective correspondence with places of $K$ --- i.e., discrete valuations of the function field $K$. For a point $y \in Y(k)$, let us write $K_y$ for the completion of $K$ with respect to the corresponding discrete valuation.

	Let $C$ be a proper smooth geometrically connected curve over $K$ of genus $g \geq 2$. By the Semistable Reduction Theorem there exists a finite extension $E / K_y$ such that $C_E = C \times_K E$ has semistable reduction. That is, if $\OO_E$ is the valuation ring of $E$, then there is a flat proper $\OO_E$-scheme with generic fiber $C_E$ and with semistable special fiber. Let us take $f: \mathcal{C}_E \to \Spec \OO_E$ to be the minimal regular (semistable) model of $C_E$. 	

	In what follows we identify $y$ with the closed point of $\Spec \OO_E$. Define a combinatorial graph $G_y$ with vertex set in bijection with the set of irreducible components of $f^{-1}(y)$, and with an edge between vertices $p, p'$ for each node shared by the corresponding irreducible components $Z, Z'$. In particular, a vertex of $G_y$ admits a loop edge for each singularity of the corresponding irreducible component of $f^{-1}(y)$. Endow $G_y$ with the structure of a weighted graph by defining $\ell(e) = 1 / [E: K_y]$ for each edge $e$. (As $k$ is algebraically closed, $[E:K_y]$ is the ramification index of the extension.) Define $\Gamma_y$ to be the metric graph induced by the weighted graph $G_y$. Define $q: \Gamma_y \to \ZZ$ as follows. Set $q(p) = 0$ for all $p \not\in V(G_y)$. For any vertex $p$ of $G_y$ define $q(p)$ to be the geometric genus of the irreducible component of $f^{-1}(y)$ corresponding to the vertex $p$. Finally, set $\agam_y = (\Gamma_y, q)$. The key point of this construction is that, by virtue of the multiplicativity of ramification indices, the object $\agam_y$ is independent of the choice of extension $E / K_y$. This follows for example from \cite[Fact~2.5]{Chinburg_Rumely_Capacity_Pairing_1993}.\footnote{As the field of constants $k$ is algebraically closed in our setting, the approach of Chinburg / Rumely in \cite{Chinburg_Rumely_Capacity_Pairing_1993} can proceed without their notion of ``well-adjusted'' regular models. They require this extra technical tool in order to work with more general global fields.}

	For example, if $f^{-1}(y)$ is smooth, then $\Gamma_y = \{p\}$ is a point, and $q(p) = g$, the genus of $C$. Conversely, if $\Gamma_y$ consists of a single point, then $f^{-1}(y)$ is smooth.

\begin{prop} \label{Prop: pm-graph}
	For each $y \in Y(k)$, the reduction graph $\agam_y = (\Gamma_y, q_y)$ is a pm-graph of genus $g$, 
	where $g$ is the genus of the curve $C$.
\end{prop}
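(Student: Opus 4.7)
My plan is to verify the two assertions separately: that $\agam_y$ satisfies the three defining axioms of a pm-graph (finite support of $q$, non-negativity of $q$, and effectivity of the canonical divisor $K$), and that its genus equals $g$. Finite support and non-negativity follow at once from the construction, since $q$ is supported on the vertices of $G_y$ and takes values in geometric genera. By the independence of $\agam_y$ from the choice of extension (noted in the construction), I can work with a single $E / K_y$ realizing the semistable reduction.

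For the effectivity of $K = \sum_p (2q(p)+v(p)-2)[p]$, my plan is to verify the inequality $2q(p)+v(p)\geq 2$ at every vertex $p$. When $q(p)\geq 1$ this is automatic. Otherwise $q(p)=0$, meaning the irreducible component $Z\subseteq f^{-1}(y)$ corresponding to $p$ has geometric genus zero. I would split into two subcases: if $Z\cong\PP^1$ is smooth, then the third defining condition of the minimal semistable model recalled in Section~\ref{Sec: Statement} forces $Z$ to meet the remaining components in at least two points, so $v(p)\geq 2$; otherwise $Z$ is geometrically rational but singular, so it has at least one self-node, and since each self-node of $Z$ produces a loop edge at $p$ contributing $2$ to $v(p)$, again $v(p)\geq 2$.

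The genus calculation reduces to comparing $\Zg{\agam_y} = \dim_\RR H^1(\Gamma_y,\RR) + \sum_p q(p)$ with the arithmetic genus $p_a(C_0)$ of the special fiber $C_0 = f^{-1}(y)$. On the graph side, \eqref{Eq: Betti number} applied to the model $G_y$ yields $\dim_\RR H^1(\Gamma_y,\RR) = \delta - n + 1$, where $\delta$ is the total number of nodes of $C_0$ and $n$ the number of its irreducible components. On the curve side, the standard normalization sequence
\[
0 \to \OO_{C_0} \to \nu_*\OO_{\tilde{C}_0} \to \bigoplus_{\text{nodes}} k \to 0
\]
gives $\chi(\OO_{C_0}) = n - \delta - \sum_i p_g(Z_i)$, hence $p_a(C_0) = 1 + \delta - n + \sum_i p_g(Z_i)$. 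Since $p_g(Z_i) = q(p_i)$ by construction, combining the two formulas yields $\Zg{\agam_y} = p_a(C_0)$; flatness of $f$ over $\Spec\OO_E$ then forces $p_a(C_0) = p_a(C_E) = g$. The only subtle point I foresee is the case of a non-smooth geometrically rational component: one must be careful to distinguish between the geometric genus (what $q$ records) and the arithmetic genus (what enters $p_a$), and to dispatch that case via its self-nodes rather than via the minimality hypothesis.
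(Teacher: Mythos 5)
Your proof is correct, but it takes a genuinely different route from the paper. The paper establishes the key identity $\ord_{p_i}(D_y) = Z_i \cdot \omega_f$ via adjunction and the blowing-up formula; effectivity then falls out of Arakelov's theorem that $\omega_f$ is numerically effective, and the genus computation is a one-line consequence of summing over components and evaluating $[f^{-1}(y)]\cdot\omega_f = \deg\omega_{C_E} = 2g-2$. You instead prove effectivity by a purely combinatorial case split on the vertices (if $q(p)\geq 1$ it is automatic; if $q(p)=0$ either the component is a smooth $\PP^1$ and the third defining condition of the semistable model supplies $v(p)\geq 2$, or it is a singular rational component whose self-node contributes a loop), and you compute the genus by comparing $\dim_\RR H^1(\Gamma_y,\RR) + \sum q(p)$ with $p_a(C_0)$ via the normalization sequence and invoking flatness. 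Your approach avoids citing Arakelov's nefness result, so it is more elementary, but it handles effectivity and genus by two unrelated mechanisms, whereas the paper's identity $\ord_{p_i}(D_y) = Z_i\cdot\omega_f$ does double duty. One small point to tidy: in your effectivity case split, when $Z$ is a smooth $\PP^1$ the third condition only kicks in if there \emph{are} other components; the degenerate case where $f^{-1}(y)=Z\cong\PP^1$ is the entire fiber satisfies the condition vacuously but has $v(p)=0$. This is harmless because that case is impossible --- by flatness the fiber has arithmetic genus $g\geq 2$ while a smooth $\PP^1$ has arithmetic genus $0$ --- but you should dismiss it explicitly rather than leave it implicit.
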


\begin{proof}
	To check that $\agam_y$ is a pm-graph, we only need to verify that its canonical divisor is effective. To avoide confusion, we will write $D_y$ for the canonical divisor of $\agam_y$ (the symbols $K$ and $K_y$ being already in use). Recall that for $p \in \Gamma_y$ the order of $D_y$ at $p$ is given by 
	\[
		\ord_p(D_y) = 2q(p) +v(p) - 2,	
	\]
where $v(p)$ is the valence of the point $p$. Let $p_1, \ldots, p_n$ be the points of $\Gamma_y$ corresponding to irreducible components $Z_1, \ldots, Z_n$ of the fiber $f^{-1}(y)$, numbered accordingly. It is clear from the definitions that $\ord_p(D_y) = 0$ for any $p \not= p_i$. The remaining cases can be calculated via intersection theory on the surface $\mathcal{C}_E$. If $\omega_f = \omega_{\mathcal{C}_E / \OO_E}$ is the relative dualizing sheaf, then we claim
	\be \label{Eq: canonical intersection}
		\ord_{p_i}(D_y) = Z_i. \omega_f.
	\ee
By the work of Arakelov, the sheaf $\omega_f$ is numerically effective \cite[Cor.~9.3.26]{Qing_Liu_Algebraic_Geometry}. So for any irreducible curve $Z_i \subset f^{-1}(y)$ we have $Z_i. \omega_f \geq 0$, which shows the effectivity of $D_y$. 

	We now prove~\eqref{Eq: canonical intersection}. Without loss of generality we may suppose $i=1$, and we write $p = p_1$ and $Z = Z_1$. Also, let $ G_y$ be the combinatorial graph dual to the fiber $f^{-1}(y)$ as constructed above. We can compute the arithmetic genus of $Z$ by the adjunction formula for regular fibered surfaces \cite[Thm.~9.1.37]{Qing_Liu_Algebraic_Geometry}:
	\be \label{Eq: adjunction}
		\ba
		p_a(Z) &= 1 + \frac{1}{2}\left( Z^2 + Z.  \omega_f \right) \\
		\Longrightarrow Z. \omega_f &= 2p_a(Z) - Z^2 - 2.
		\ea
	\ee
The blowing-up formula relating arithmetic and geometric genus for a curve on a fibered surface behaves especially nicely in this case due to the presence of only nodal singularities. We have
	\benn
		\ba
			p_a(Z) &= p_g(Z) + \#\{ \text{nodes of $Z$} \} \\
				&= q(p) + \#\{ \text{loop edges of $G_y$ at $p$}\}.
		\ea
	\eenn
Define $v_{l}(p)$ (resp. $v_n(p)$) to be the valence of $G_y$ at $p$ contributed by loop edges (resp. non-loop edges). So $v(p) = v_l(p) + v_n(p)$. Each loop contributes $2$ to the valence at $p$, so 
	\be \label{Eq: Blow-up}
		p_a(Z) = q(p) + \frac{1}{2}v_l(p).	
	\ee

	Next note that $Z \sim - \sum_{i > 1} Z_i$ since $[f^{-1}(y)] = \sum_i Z_i$ is a principle divisor. The intersections of the $Z_i$'s with $Z$ for $i > 1$ are in bijective correspondence with the non-loop edges of $G_y$ at $p$, so	
	\be \label{Eq: Z squared}
			Z^2 = - \sum_{i > 1} Z_i . Z = - \#\{ \text{non-loop edges at $p$} \} = - v_n(p).
	\ee
Finally, we combine~\eqref{Eq: adjunction}, \eqref{Eq: Blow-up}, and~\eqref{Eq: Z squared} to find
	\[
		Z.\omega_f = 2\left(q(p) + \frac{1}{2}v_l(p) \right) + v_n(p) - 2 
		= 2q(p) +v(p) - 2 = \ord_p(D_y),
	\]
which is what we wanted.

	To compute the genus of $\agam_y$, we calculate the degree of its canonical divisor: 
		\benn
			\ba
				2\Zg{\agam_y} - 2 =  \sum_{p \in \Gamma_y} \ord_p(D_y) 
					&= \sum_i \ord_{p_i}(D_y) \\
					&= \sum_{i}  Z_i. \omega_f \\
					&= [f^{-1}(y)].\omega_f \\
					&= \deg(\omega_{C_{E}})  = 2g-2,
			\ea
		\eenn
where $\omega_{C_{E}}$ is the canonical sheaf on $C_{E}$. Hence the genus of $\agam_y$ is $g$.
\end{proof}

\subsection{The Work of Zhang}

	We continue to assume that $C$ is a proper smooth connected curve of genus $g \geq 2$ over the function field $K$ with field of constants $k$. 	Write $\alg{C} = C \times_K \alg{K}$. We begin with the following result of Zhang:
	
\begin{thm}[Zhang, {\cite[Thm.~5.6]{Zhang_Admissible_Pairing_1993}}] \label{Thm: Admiss Pairing}
	For any divisor $D$ on $\alg{C}$ of degree~1,
		\[
			a'(D) \geq \frac{(\omega_a, \omega_a)}{4(g-1)} 
				+ \left(1 - \frac{1}{g}\right)\hat{h}\left(D - \frac{K_C}{2g-2}\right),
		\]
	where $\omega_a$ is the admissible relative dualizing sheaf associated to $C$ and $K_C$ is a canonical divisor on $C$. 
\end{thm}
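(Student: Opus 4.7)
The plan is to apply the function-field Faltings--Hriljac formula to identify $\hat{h}(j_D(x))$ with an admissible adelic self-intersection of the degree-zero class $[x] - D$ on the minimal regular model, and then to combine admissible adjunction with Zhang's positivity results for admissible line bundles on curves to extract the lower bound. Specifically, by Zhang's function-field Faltings--Hriljac formula \cite{Zhang_Admissible_Pairing_1993}, for any $x \in C(\alg{K})$ one has
\[
\hat{h}(j_D(x)) = -\tfrac{1}{2}\bigl(\overline{[x]-D},\overline{[x]-D}\bigr)_a,
\]
where $\overline{[x]-D}$ denotes the canonical adelic line bundle associated to $[x] - D$ equipped with the admissible metric on the model $f: X' \to Y'$.

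Next, introduce $\xi = K_C/(2g-2) \in \Pic^1(\alg{C}) \otimes \QQ$ and $Q = D - \xi$, so that $\deg Q = 0$ and $\hat{h}(Q) = \hat{h}(D - \xi)$. Decompose $[x] - D = ([x] - \xi) - Q$ and expand the admissible self-intersection bilinearly into three pieces. Applying Faltings--Hriljac to $Q$ recovers $\hat{h}(Q)$; the admissible adjunction identity $(\overline{[x]},\omega_a)_a + (\overline{[x]},\overline{[x]})_a = 0$ converts the pure $[x]-\xi$ piece into a linear expression in $(\overline{[x]}, \overline{\xi})_a$; and the cross piece is an admissible pairing of two degree-zero classes that, again by Faltings--Hriljac, agrees (up to sign) with the N\'eron--Tate pairing $\langle j_\xi(x), Q\rangle$.

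For the main $[x]-\xi$ piece, I would apply Zhang's admissible Hodge-index / successive-minima inequality to the positive-degree admissible line bundle $\overline\xi$, whose self-intersection equals $(\omega_a,\omega_a)/(2g-2)^2$. After substituting the resulting lower bound for $\liminf_x (\overline{[x]}, \overline{\xi})_a$ into the expansion above, the $[x] - \xi$ contribution to $\liminf_x \hat h(j_D(x))$ is bounded below by $(\omega_a, \omega_a)/(4(g-1))$. The cross piece, combined with the $\hat h(Q)$ term, produces the correction $(1 - 1/g)\hat h(D-\xi)$: the factor $1/g$ emerges from the $1/g$ coefficient in the explicit formula for the admissible measure $\mu = \mu_K$ derived in \S\ref{Sec: Admissible Stuff}, which governs the interaction between degree-zero admissible classes on a curve of genus $g$.

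The main obstacle is Zhang's admissible positivity inequality for $\overline{\xi}$: this is a nontrivial statement analogous to the classical Hodge index theorem for arithmetic surfaces and constitutes the real content of the proof. The algebra producing the explicit coefficients $\tfrac{1}{4(g-1)}$ and $(1-1/g)$ is routine once the admissible framework is in place, but it requires careful bookkeeping using admissible adjunction and the explicit form of the admissible measure $\mu_K$.
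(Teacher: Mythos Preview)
The paper does not prove this theorem; it is quoted from \cite[Thm.~5.6]{Zhang_Admissible_Pairing_1993} and used as a black box. There is therefore no argument in the paper to compare your sketch against.

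Your outline is nonetheless broadly faithful to Zhang's original proof: one writes $\hat h(j_D(x))$ as an admissible self-intersection via the Hodge-index/Faltings--Hriljac formula, eliminates $(\overline{[x]},\overline{[x]})_a$ with admissible adjunction $(\overline{[x]},\overline{[x]})_a + (\overline{[x]},\omega_a)_a = 0$, and then invokes Zhang's successive-minima inequality for a positive-degree admissible bundle. You correctly identify this last step as the real content.

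Two points need correcting. First, the factor $(1-1/g)$ does \emph{not} come from the $1/g$ in the local formula for $\mu_K$ in \S\ref{Sec: Admissible Stuff}; that formula is absorbed into the definition of the admissible metrics and is invisible once the global pairing $(\cdot,\cdot)_a$ is set up. After adjunction one has
\[
	\hat h\bigl(j_D(x)\bigr) \;=\; \bigl(\overline{[x]},\,\omega_a + 2\overline D\bigr)_a - (\overline D,\overline D)_a,
\]
and Zhang's inequality is applied to the single degree-$2g$ bundle $\omega_a + 2\overline D$. Writing $D = \xi + Q$ and expanding, the $(\omega_a, Q)_a$ contributions from the two terms cancel identically, and the $(Q,Q)_a$ contributions combine with coefficient $\tfrac{1}{g}-1$; together with $(Q,Q)_a = -\hat h(Q)$ this yields the $(1-1/g)\hat h(Q)$ term by pure bookkeeping with $2g$ and $2g-2$.

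Second, and relatedly, your decomposition $[x]-D = ([x]-\xi) - Q$ \emph{before} passing to the liminf leaves a cross term $\langle j_\xi(x), Q\rangle$ that has no sign and over which you have no control: the liminf of a sum is not the sum of liminfs. Zhang avoids this by applying the successive-minima inequality to $\omega_a + 2\overline D$ as a whole, so that the only $x$-dependence is through a single height function. As written, your treatment of the cross piece is a genuine gap.
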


	We want to relate the admissible intersection number $(\omega_a, \omega_a)$ to invariants of a global semistable model of $C$. Choose a finite extension $K' / K$ such that $C_{K'} = C \times_K K'$ has semistable reduction, and write $d = [K':K]$. Let $Y'$ be the proper smooth curve over $k$ with function field $K'$. Define $f:X' \to Y'$ to be the minimal regular (semistable) model of $C_{K'}$. Notice that we are working with a global semistable model of $C_{K'}$ in contrast to the local model used in the previous section. Let $\omega_{X' / Y'}$ be the relative dualizing sheaf. 

	When $f: X' \to Y'$ is smooth and $C$ is not a constant curve, it turns out that
	\[
		(\omega_a, \omega_a) = \frac{\omega_{X' / Y'} . \omega_{X' / Y'}}{d} \geq \frac{12}{d},
	\]
by a result of Par{\v{s}}in \cite{Parsin_Algebraic_Curves_FF_1968}. Then Zhang's result and positivity of the canonical height implies
	\[
		\inf_{D \in \Div^1\left(\alg{C}\right)} a'(D) \geq \frac{(\omega_a, \omega_a)}{4(g-1)} 
			\geq \frac{3}{d(g-1)},
	\]
which proves Theorem~\ref{Thm: Main Theorem} when $f$ is smooth.

	The rest of the discussion applies whether or not $f$ is smooth, but it will be of greatest interest when $f$ has some singular fibers. To relate the quantities $a'(D)$ to our graph invariants in this case, we use the following recent result:

\begin{thm}[Zhang, {\cite[Cor.~1.3.2 and \S1.4]{Zhang_Gross-Schoen_Cycles_2008}}]
	\label{Thm: Gross-Schoen}
	With the notation above, 
	\[
		(\omega_a, \omega_a) \geq \frac{2g-2}{2g+1} \sum_{y \in Y(k)} \Zphi{\agam_y}.
	\]
\end{thm}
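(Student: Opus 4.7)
The plan is to reduce the inequality to the positivity of the Beilinson--Bloch height of the Gross--Schoen modified diagonal cycle on $C^3$, following the framework Zhang has built around admissible adelic metrics. Set $\xi = K_C/(2g-2) \in \Pic^1(C) \otimes \QQ$ and, on the triple product $C^3$, form the modified diagonal 1-cycle
\benn
	\Delta_\xi = \Delta_{123} - \Delta_{12,\xi} - \Delta_{13,\xi} - \Delta_{23,\xi}
	    + \Delta_{1,\xi} + \Delta_{2,\xi} + \Delta_{3,\xi},
\eenn
where each partial diagonal is filled out using $\xi$ in the missing coordinates. This is homologically trivial, so it admits a Beilinson--Bloch height pairing $\langle \Delta_\xi, \Delta_\xi \rangle$ computed via admissible intersection theory on a semistable model of $C^3$ over $Y$.

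The next step is to compute this height adelically, writing $\langle \Delta_\xi, \Delta_\xi \rangle$ as a sum of local contributions $\langle \Delta_\xi, \Delta_\xi \rangle_y$ over $y \in Y(k)$. At each place $y$ one works on (a desingularization of) the self-triple-fiber-product of the minimal regular model of $C_{K'}$ at $y$, endowed with Zhang's admissible metric. The whole computation is local and combinatorial once one has the reduction graph $\agam_y$: the admissible Green's functions entering the intersection of $\Delta_\xi$ with itself are determined by $\mu_y$ and $r_y(\cdot,\cdot)$, and after a careful bookkeeping of the seven pieces making up $\Delta_\xi$ one obtains the identity
\benn
	\langle \Delta_\xi,\Delta_\xi\rangle
	= \frac{2g+1}{2g-2}\,(\omega_a,\omega_a) \;-\; \sum_{y \in Y(k)} \Zphi{\agam_y},
\eenn
in which the combination of resistance integrals and canonical-divisor intersections is exactly the invariant $\Zphi{\agam_y} = 3g\Ztau{\agam_y} - \tfrac14(\Zepsilon{\agam_y} + \ell(\Gamma_y))$ defined in \S\ref{Sec: Invariants}. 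To close the argument I would invoke positivity of the Beilinson--Bloch height $\langle \Delta_\xi,\Delta_\xi\rangle \geq 0$; in the geometric (function-field, characteristic zero) setting this follows from the Hodge index theorem for cycles homologous to zero on smooth projective varieties over $\CC$, applied after base change to a smooth projective model of $C^3$ and orthogonal projection onto the primitive part. Rearranging gives the claimed inequality.

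The main obstacle is the local computation in the second step: identifying the $y$-local admissible intersection contribution to $\langle \Delta_\xi,\Delta_\xi\rangle$ with the graph invariant $\Zphi{\agam_y}$. The seven-fold modified diagonal produces many cross-terms, and one must carefully reduce each intersection to integrals over $\Gamma_y^2$ against combinations of $\mu_y$, $\delta_{K_y}$, and the resistance function $r_y$, then collect and simplify. Once this identification is made, the rest --- the adelic decomposition and the positivity input --- is comparatively formal. A secondary (but routine) issue is checking that the construction is independent of the chosen semistable extension $K'/K$, which reduces to the multiplicativity of ramification indices used already in the construction of $\agam_y$ in Proposition~\ref{Prop: pm-graph}.
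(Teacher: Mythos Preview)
The paper does not prove this statement at all: it is quoted verbatim as a result of Zhang, with a citation to \cite[Cor.~1.3.2 and \S1.4]{Zhang_Gross-Schoen_Cycles_2008}, and is then used as a black box in the chain of inequalities~\eqref{Eq: Big Inequalities}. So there is no ``paper's own proof'' to compare your proposal against.

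That said, your sketch is a faithful outline of Zhang's argument. The identity
\[
	\langle \Delta_\xi, \Delta_\xi \rangle \;=\; \frac{2g+1}{2g-2}\,(\omega_a,\omega_a) \;-\; \sum_{y} \Zphi{\agam_y}
\]
is precisely Zhang's main formula, and the nonnegativity of the left-hand side in the characteristic-zero function-field setting is exactly the Hodge-index input he uses. Your identification of the ``main obstacle'' --- matching the local admissible intersection contributions on the triple product with the graph invariant $\Zphi{\agam_y}$ --- is accurate: this is where the bulk of Zhang's paper lies, and it is not something one can reconstruct in a paragraph. If you were writing this up independently you would need to either reproduce that computation or cite it, which brings you back to what the present paper does.
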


	Recall from \S\ref{Sec: Invariants} that $\Zphi{\agam_y} = 0$ whenever $\agam_y$ consists of a single point. As $C$ is smooth over $K$, there exists an affine subscheme $U \subset Y$ and a proper smooth morphism $X \to U$ with generic fiber $C$. Hence $\agam_y$ is a single point for all $y \in U(k)$, which means the above sum over points of $Y$ is actually finite. 

	Continuing with the notation above, let $\alpha: Y' \to Y$ be a morphism of proper smooth curves realizing the extension of function fields $K'/K$. For $y' \in Y'(k)$, we write $\agam_{y'}$ for the reduction graph of $C_{K'}$ at the point $y'$. We need to relate $\Zphi{\agam_{y'}}$ and $\Zphi{\agam_{y}}$ whenever $\alpha(y') = y$. In fact, we claim
	\be \label{Eq: Ramification Claim}
		\Zphi{\agam_{y'}} = e_{y'} \Zphi{\agam_y},
	\ee	
where $e_{y'}$ is the ramification index of $\alpha$ at $y'$. The point is that $\agam_{y'}$ is defined relative to the base field $K'$, so the lengths of its edges will differ from those of $\agam_y$ by exactly the ramification index. 

	More precisely, let $E = K'_{y'}$ be the completion of $K'$ with respect to the discrete valuation corresponding to $y'$. Then $X' \times_{Y'} \Spec \OO_{E}$ is the minimal regular (semistable) model of $C_{E}= C_{K'} \times_{K'} E$ because the fiber of $X'$ over $y'$ is unaffected by base change to the completed local ring $\OO_E$. Thus we can define $\agam_{y'}$ using this model. Notice that all of its segments will have length~1 because we did not need to make a finite extension of $E = K'_{y'}$ in order to obtain a semistable model over $K'$. On the other hand, $K_y \subset E$ since $y'$ lies above $y$, and so we can define $\agam_y$ by passing to the extension $E / K_y$. The degree $[E: K_y]$ is equal to the ramification index $e_{y'}$ of $\alpha: Y' \to Y$ at $y'$. The underlying combinatorial graph structure of $\agam_y$ will be identical to that of $\agam_{y'}$, but now a segment of $\agam_{y}$ will have length $1 / [E:K_y] = 1/e_y$. One can see from the proof of Proposition~\ref{Prop: Special Forms} that if a pm-graph $\agam'$ is obtained from $\agam$ by scaling all segments by the same quantity $\lambda$, then $\Zphi{\agam'} = \lambda\Zphi{\agam}$. The claim \eqref{Eq: Ramification Claim} follows.

	Next we need to know how the lengths $\ell_i\left(\agam_{y'}\right)$ relate to the singular indices $\delta_i = \delta_i(X'/Y')$. These notions are essentially dual to each other: a node of type $i$ in the fiber $f^{-1}(y')$ corresponds to a segment $e$ of length $1$ in $\Gamma_{y'}$. Each point of the segment $e$ (aside from the endpoints) is a point of $\Gamma_{y'}$ of type $i$. This correspondence allows one to verify that
		\be \label{Eq: Singular lengths}
			\sum_{y' \in Y'(k)} \ \ell_i\left(\agam_{y'}\right) = \delta_i(X'/Y'), 
				\qquad 0 \leq i \leq \left[g/2\right].
		\ee

	We are now ready to reduce the main theorem to a statement about graph invariants. By positivity of the canonical height, we find
	\be \label{Eq: Big Inequalities}
		\ba
		\inf_{D \in \Div^1\left(\alg{C}\right)} a'(D) &\geq \frac{(\omega_a, \omega_a)}{4(g-1)} 
				\hspace{5.15cm} \text{(Theorem~\ref{Thm: Admiss Pairing})}\\
			&\geq \frac{1}{2(2g+1)} \sum_{y \in Y(k)}  \Zphi{\agam_y}
				\hspace{2.6cm} \text{(Theorem~\ref{Thm: Gross-Schoen})} \\
			&= \frac{1}{2(2g+1)} \sum_{y \in Y(k)} \frac{1}{d} \sum_{\substack{y' \in Y'(k) \\ \alpha(y') = y}}
				e_{y'} \ \Zphi{\agam_y} \hspace{0.35cm} \Big(\text{as } 
					 \sum_{\alpha(y') = y} e_{y'} = d \Big) \\
			&=  \frac{1}{2d(2g+1)}\sum_{y' \in Y'(k)}  \ \Zphi{\agam_{y'}}
				\hspace{1.55cm} \text{(by \eqref{Eq: Ramification Claim})}.
		\ea
	\ee
Thus we have reduced the problem about algebraic curves to a problem about pm-graphs. In the remainder of the paper, we will prove the following result:
	
\begin{thm} \label{Thm: Graph Bounds}
	Let $c(2)=\frac{1}{27}$, $c(3)= \frac{2}{81}$, and $c(4) = \frac{1}{36}$. Then for any polarized metric graph 
	of genus $g = 2, 3$, or $4$, we have
		\benn
				\Zphi{\agam} \geq c(g) \ell_0\left(\agam\right)
					+\sum_{i \in (0, g/2]} \frac{2i(g-i)}{g}\ell_i\left(\agam\right).
		\eenn
\end{thm}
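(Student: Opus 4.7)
The plan is to reduce the global inequality for $\Zphi{\agam}$ to a per-segment bound, exploiting the decomposition of $\Zphi{\agam}$ into contributions coming from points of each type~$i$ (this is the role of the forthcoming Lemma~\ref{Lem: Decomp}). Concretely, I expect to write $\Zphi{\agam} = \sum_i \int_{\{p: \text{type}(p)=i\}} \zeta_i(p)\, dp$ for some non-negative density $\zeta_i$, and to show that for every point $p$ of type $i\ge 1$ one has $\zeta_i(p) = \tfrac{2i(g-i)}{g}$ exactly (this is the standard bridge contribution, and corresponds to the fact that for a type-$i$ point the effective resistance and admissible measure restrict in a simple product way to the two pieces). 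Once such an equality is in hand for $i \ge 1$, those terms of the desired inequality follow immediately, and the entire problem reduces to proving the single inequality $\int_{\{p: \text{type}(p)=0\}} \zeta_0(p)\, dp \ge c(g)\,\ell_0(\agam)$.

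To bound $\zeta_0$ from below by $c(g)$ uniformly, I would first reduce to a manageable family of pm-graphs using the following normalizations. Scaling invariance ($\Zphi{\lambda\agam} = \lambda\Zphi{\agam}$) lets us fix $\ell(\Gamma) = 1$. Next, one can contract all segments of type $i \geq 1$ away (passing to the ``genus-$g$ core'' formed by a union of biconnected components with possibly positive vertex weights attached at articulation points) and then absorb any remaining vertex weights into the function $q$; this is harmless because the contributions of type-$i$ segments have already been accounted for. At this stage $\agam$ is a \textbf{genus-$g$ biconnected} pm-graph (equivalently a $q$-enriched cubic graph after placing weights $q(p)$ at vertices), so every point is of type $0$. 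For fixed genus $g\le 4$, there are only finitely many underlying combinatorial types. By continuity of $\Zphi{\agam}$ in the edge lengths on the closed simplex where $\ell(\Gamma)=1$ (this is where Section~\ref{Sec: Continuity} enters), the minimum of $\Zphi{\agam}/\ell_0(\agam)$ over each combinatorial type is attained, and it suffices to check the bound at these minima.

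The main obstacle is the verification of the inequality $\Zphi{\agam} \ge c(g)$ on each of the finitely many strata of cubic (enriched) graphs of genus $2,3,4$. Using Corollary~\ref{Cor: No tau} one has the closed form
\[
\Zphi{\agam} = \frac{5g-2}{4(g-1)}\Zepsilon{\agam} - \frac{3}{8(g-1)}r(K,K) - \frac{1}{4}\ell(\Gamma),
\]
so both $\Zepsilon{\agam}$ and $r(K,K)$ must be computed as rational functions of the edge lengths. Section~\ref{Sec: Rapid} supplies formulas in terms of effective resistances $r(e)$ and the ``pseudoinverse'' data of the weighted graph Laplacian, which are polynomial in the edge lengths with denominators given by the spanning tree polynomial. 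On each stratum the inequality therefore becomes an assertion that a specific rational function in the $\ell(e)$'s is bounded below by $c(g)$ on the open simplex. I expect to handle this case by case: for the strata with few edges one can differentiate and locate interior critical points directly; for the larger strata (the hardest being the densest cubic graphs of genus $4$) the critical-point analysis produces polynomial systems that are too large to attack by hand, and one is forced into a computer algebra computation, which is what Section~\ref{Sec: Data} records.

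Boundary strata of the simplex (letting some $\ell(e)\to 0$) correspond to contracting an edge and landing in a lower-complexity combinatorial type, so one must verify that the inequality is inherited in such limits; this requires a short continuity argument showing that $\Zepsilon{\agam}$ and $r(K,K)$ extend continuously under edge contraction (again a general fact from Section~\ref{Sec: Continuity}), plus the observation that contracting a type-$0$ loop can only decrease $\ell_0$. Assuming these pieces, the theorem follows by running the finite verification on each stratum for $g\in\{2,3,4\}$ and reading off the constants $c(2)=\tfrac{1}{27}$, $c(3)=\tfrac{2}{81}$, $c(4)=\tfrac{1}{36}$ as the worst-case values observed.
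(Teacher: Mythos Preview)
Your overall architecture matches the paper: use Lemma~\ref{Lem: Decomp} to strip off the type-$i\ge 1$ segments (which contribute exactly $\tfrac{2i(g-i)}{g}\ell_i$), reduce to irreducible pm-graphs of genus~$g$, note there are finitely many combinatorial types, and invoke the edge-contraction continuity of \S\ref{Sec: Continuity} to pass between strata. Two substantive differences are worth flagging. First, the paper does not stop at ``$q$-enriched biconnected graphs'' as you do; it eliminates the weights entirely via Lemma~\ref{Lem: q equals 0}, which glues a small circle at each point with $q(p)>0$ and uses additivity plus the explicit circle value $\Zphi{\alg{C}}=\tfrac{g-1}{6g}\ell(C)$ to induct $q$ down to zero. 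This cuts the case analysis down to the short list in Figure~\ref{Fig: All Graphs} (one graph for $g=2$, two for $g=3$, five for $g=4$), whereas your scheme must also treat all lower-Betti-number irreducible graphs decorated with positive $q$. Second, and more importantly, the paper's verification step is \emph{not} a critical-point search. Instead of minimizing $\Zphi{\agam}$ over the simplex, Proposition~\ref{Prop: Verify} bounds the ``bad'' term by $\omega_1/\eta^2 \le A\,\sigma_3/\sigma_2$ and then uses the Maclaurin-type inequality $\sigma_3/\sigma_2 \le \tfrac{m-2}{3m}\ell(\Gamma)$; the constant $A$ is found by clearing denominators and asking that $A\sigma_3\eta^2 - \omega_1\sigma_2$ have all nonnegative monomial coefficients, which is a finite system of \emph{linear} inequalities in~$A$. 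This is far cheaper than solving polynomial critical-point systems (and is what makes the \textit{Mathematica} runs in \S\ref{Sec: Data} feasible), at the cost of being lossy: the paper itself notes that $c(3)$ and $c(4)$ are not sharp for this reason. Your direct-minimization plan would in principle recover the optimal constants, but the polynomial systems for the genus-$4$ cubic graphs with nine edges are genuinely unpleasant, and you would need to justify that interior critical points can actually be located and compared---something the paper's coefficient-positivity trick sidesteps entirely.
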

		
	If we assume Theorem~\ref{Thm: Graph Bounds} for the moment, then the relations~\eqref{Eq: Big Inequalities} and~\eqref{Eq: Singular lengths} show that for a curve $C / K$ of genus $2 \leq g \leq 4$, 
{\small
	\benn
		\ba
			\inf_{D \in \Div^1\left(\alg{C}\right)} a'(D) &\geq 
				\frac{1}{2d(2g+1)} \sum_{y' \in Y'(k)}  \left[c(g) \ell_0\left(\agam_{y'}\right)
					+\sum_{i \in (0, g/2]} \frac{2i(g-i)}{g}\ell_i\left(\agam_{y'}\right)\right] \\
				&=  \frac{1}{2d(2g+1)}\left[ c(g) \delta_0(X' / Y') 
					+ \sum_{i \in (0, g/2]}  \frac{2i(g-i)}{g} \delta_i(X'/Y') \right].
		\ea
	\eenn
}
\hspace{-4pt}Evidently this proves Theorem~\ref{Thm: Main Theorem}. Note that it gives a positive lower bound for $a'(D)$ whenever there exists a singular fiber of $f:X' \to Y'$, and so the Geometric Bogomolov Conjecture is true for curves of genus at most $4$ with bad reduction.


\section{Rapid Computation of $\Zphi{\agam}$}
\label{Sec: Rapid}

	Let $\agam = (\Gamma, q)$ be a pm-graph of genus $g$, and let us fix a model $G$. Enumerate the edges of $G$ as $e_1, \ldots, e_m$, and suppose these edges have lengths $\ell_1, \ldots, \ell_m$, respectively. By fixing the combinatorial type of $G$ as well as the function $q$, we may view $\Zphi{\agam}$ as a function of $\ell_1, \ldots, \ell_m$. It is our goal now to further illuminate the nature of this function. We will use our new description to prove ``continuity under edge contractions.'' This section is called \textit{rapid computation of $\Zphi{\agam}$} because we provide a reasonably efficient algorithm for its implementation in a computer algebra package. See Remarks~\ref{Rem: Algorithms1} and~\ref{Rem: Algorithms2}.
	
\begin{lem}
	Let $e$ be a segment of $\Gamma$ with respect to the model $G$. Choose an orientation on $G$ and use it to 
	give an isometry $y: [0, \ell(e)] \simarrow e$. For any vertex $p$ of $G$, we have
		\benn
			\int_e r(p,y) dy = \frac{\ell(e)^2}{6}F(e)
				+ \frac{\ell(e)}{2}\left( r(p, e^{-}) \stackrel{\phantom{+}}{+} r(p, e^+) \right).
		\eenn
\end{lem}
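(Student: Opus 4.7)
The plan is to determine the restriction of $r(p,\cdot)$ to the segment $e$ explicitly as a polynomial in the arc-length parameter $y$, and then integrate term by term. The key input is the formula for the canonical measure $\mu_0$ on the interior of an edge, together with the identity relating $r$ and the admissible Green's function $g_0$ recorded in \eqref{Eqn: g_0}.

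First I would note that since $p$ is a vertex of $G$, the point $p$ does not lie in the open interior of $e$. On that interior, the formula for $\mu_D$ from \S\ref{Sec: Admissible Stuff} applied with $D=0$ shows that $\mu_0$ has density $F(e)/\ell(e)$ with respect to Lebesgue measure on $e$. Combined with property~(b) of $g_0$, namely $\Delta_y g_0(p,y) = \delta_p - \mu_0$, and the identification of the Laplacian with $-(\partial^2/\partial y^2)\,dy$ in the interior of the segment, this yields
\[
   \frac{\partial^2 g_0(p,y)}{\partial y^2} = \frac{F(e)}{\ell(e)}, \qquad y \in (0, \ell(e)).
\]
Applying \eqref{Eqn: g_0} converts this into the ODE $r''(p,y) = -2F(e)/\ell(e)$ on $(0,\ell(e))$, so on the segment we have
\[
   r(p,y) = -\frac{F(e)}{\ell(e)}\,y^2 + A\,y + B
\]
for constants $A,B$ depending on $p$ and $e$.

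Next I would solve for $A$ and $B$ using continuity of $r(p,\cdot)$ and the boundary conditions at $y=0$ and $y=\ell(e)$: these give $B = r(p, e^-)$ and $A = F(e) + \bigl(r(p,e^+) - r(p,e^-)\bigr)/\ell(e)$. A direct termwise integration then collects to the claimed identity
\[
   \int_e r(p,y)\,dy = \frac{\ell(e)^2}{6}F(e) + \frac{\ell(e)}{2}\bigl(r(p,e^-) + r(p,e^+)\bigr).
\]

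The only real subtlety—and what I would flag as the main point to handle carefully—is the case where $e$ is a loop edge, so that $e^- = e^+$ is a single marked vertex. In that situation the ODE on the open interval $(0,\ell(e))$ is unchanged, but the continuity condition forces $r(p, y(0)) = r(p, y(\ell(e)))$, which imposes $A = F(e)$ and $B = r(p,e^-)$; the resulting integral evaluates to $\tfrac{1}{6}\ell(e)^2 F(e) + \ell(e)\,r(p,e^-)$, matching the right-hand side of the lemma once one substitutes $r(p,e^+) = r(p,e^-)$. Thus the formula is uniform in the topological type of the segment.
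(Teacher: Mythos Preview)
Your proof is correct and essentially the same as the paper's: both derive the ODE $r''(p,y) = -2F(e)/\ell(e)$ on the interior of $e$, solve for the quadratic using the boundary values $r(p,e^\pm)$, and integrate. The only cosmetic difference is that the paper reaches the ODE via $\Delta_y r(p,y) = 2g\mu + (\text{discrete masses})$ using \eqref{Eqn: mu_0} and \eqref{Eqn: mu vs mu_0}, while you route through $g_0$ and $\mu_0$ directly; your explicit treatment of the loop-edge case is a nice addition that the paper leaves implicit.
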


\begin{proof}
	By \eqref{Eqn: mu_0} and \eqref{Eqn: mu vs mu_0} in the proof of Proposition~\ref{Prop: Tau in terms of epsilon}, we know that 
	\benn
		\Delta_y\left( r(p,y) \right) = 2g\mu + \text{discrete masses}.
	\eenn
In particular, since we gave an explicit formula for $\mu$ in \S\ref{Sec: Admissible Stuff}, 
	\benn
		\ba
			& -\frac{d^2}{ds^2}r(p, y(s)) = 2\frac{F(e)}{\ell(e)} \\
			& \Rightarrow  r(p, y(s)) = - \frac{F(e)}{\ell(e)}s^2 + As + B, \quad 0 \leq s \leq \ell(e),
		\ea
	\eenn
for some constants $A$ and $B$ independent of $s$. Substituting $s=0$ shows $B = r(p, e^-)$, and then substituting $s = \ell(e)$ allows us to solve for $A$. Explicitly, we find
	\benn
		r(p, y(s)) = - \frac{F(e)}{\ell(e)}s^2 + \left( \frac{r(p, e^+) - r(p, e^-)}{\ell(e)} +F(e) \right)s + r(p, e^-).
	\eenn
Now it is a simple matter of calculus to compute the integral in the lemma and arrive at the desired expression for it.
\end{proof}

\begin{lem} \label{Lem: Computing epsilon}
	Let $\agam = (\Gamma, q)$ be a pm-graph of genus $g$. Fix a model $G$ of $\agam$ with vertex set
	$\{p_1, \ldots, p_n\}$ and edge set $\{e_1, \ldots, e_m\}$. Let $K(p_i) = v(p_i) - 2 + 2q(p_i)$ be the order of the
	canonical divisor at $p_i$. Then we have
	\benn
		\ba
			\Zepsilon{\agam} &= \frac{1}{g}\sum_{i,j}q(p_i) K(p_j) r(p_i, p_j) 
				+ \frac{g-1}{3g} \sum_{k} F(e_k)^2\ell_k \\
				& \qquad + \frac{1}{2g} \sum_i K(p_i) \sum_k F(e_k) \left(r(p_i, e_k^-) + r(p_i, e_k^+) \right).
		\ea
	\eenn
\end{lem}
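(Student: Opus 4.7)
The plan is to unfold the definition of $\Zepsilon{\agam}$ directly and feed in the explicit formula for the admissible measure $\mu$ from Section~\ref{Sec: Admissible Stuff}, then use the preceding lemma to evaluate the segmental integrals.  First I would rewrite
\benn
    \Zepsilon{\agam} = \dint r(x,y) \delta_K(x) d\mu(y) = \int_\Gamma r(K,y) \, d\mu(y) = \sum_{i} K(p_i) \int_\Gamma r(p_i, y)\, d\mu(y),
\eenn
using that $K = \sum_i K(p_i) [p_i]$ is supported on $V(G)$ (all points of positive valence and all points in $\supp(q)$ are vertices of the chosen model).

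Next, substitute the explicit decomposition
\benn
    \mu = \frac{1}{g}\left( \sum_{p \in \Gamma} q(p)\delta_p + \sum_{k} F(e_k) \frac{dx|_{e_k}}{\ell_k} \right)
\eenn
to split each inner integral into a discrete part $\frac{1}{g}\sum_j q(p_j) r(p_i, p_j)$ and a continuous part $\frac{1}{g}\sum_k \frac{F(e_k)}{\ell_k} \int_{e_k} r(p_i, y)\, dy$.  For the continuous part, I would apply the previous lemma to write
\benn
    \frac{1}{\ell_k}\int_{e_k} r(p_i, y)\, dy = \frac{\ell_k}{6} F(e_k) + \frac{1}{2}\bigl(r(p_i, e_k^-) + r(p_i, e_k^+)\bigr).
\eenn

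Assembling the three pieces, multiplying through by $K(p_i)$, and swapping the order of summation yields the three terms claimed.  The term containing $F(e_k)^2 \ell_k$ inherits the prefactor $\frac{1}{6g}\sum_i K(p_i)$, which collapses to $\frac{g-1}{3g}$ by virtue of $\sum_i K(p_i) = \deg K = 2g-2$.  For the term involving $r(p_i, p_j)$, the symmetry $r(p_i, p_j) = r(p_j, p_i)$ combined with relabeling of the summation indices converts $\sum_{i,j} K(p_i) q(p_j) r(p_i, p_j)$ into the stated $\sum_{i,j} q(p_i) K(p_j) r(p_i, p_j)$.

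There is no real obstacle here; the computation is essentially bookkeeping.  The only subtlety to watch is making sure the model $G$ contains every point in the support of $q$ and every point of valence $\neq 2$, so that both the canonical divisor and the measure $\mu$ are supported on $V(G) \cup \bigcup_k e_k$ in the manner required by the formulas of Section~\ref{Sec: Admissible Stuff}; this is built into the definition of a model of a pm-graph.
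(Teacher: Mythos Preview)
Your proposal is correct and follows essentially the same route as the paper: expand $\Zepsilon{\agam}=\int r(K,y)\,d\mu(y)$ using the explicit form of $\mu$, apply the preceding lemma to the segmental integrals, and collapse $\sum_i K(p_i)=2g-2$. The paper's proof omits your remarks about index symmetry and the support of $K$ and $\mu$, but these are the only differences.
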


\begin{proof}
	By definition, we have
		\benn
			\ba
				g \ \Zepsilon{\agam} &= g \int r(K, y) d\mu(y) \\
					&= g \sum_i K(p_i) \int r(p_i, y) d\mu(y) \\
					&= \sum_i K(p_i) \int r(p_i, y) \left( \sum_j q(p_j)\delta_{p_j}(y) 
						+ \sum_k \frac{F(e_k)}{\ell_k} dy|_{e_k} \right) \\
					&= \sum_{i, j} q(p_j) K(p_i) r(p_i, p_j) + \sum_i K(p_i) \sum_k \frac{F(e_k)}{\ell_k}
						\int_{e_k} r(p_i, y) dy.
			\ea
		\eenn
	Inserting the formula from the previous lemma and simplifying completes the proof.
\end{proof}

\begin{remark} \label{Rem: Algorithms1}
	Corollary~\ref{Cor: No tau} and Lemma~\ref{Lem: Computing epsilon} allow us to implement $\Zphi{\agam}$
	 in a computer algebra package. Indeed, all of the quantities involved are discrete in the sense that 
	they depend only on the finite quantity of data contained in the weighted graph $G$ and the function $q$. 
	Computing the effective resistance is easy since it is essentially an inverse of the combinatorial Laplacian 
	matrix. (Technically speaking, this is false since $Q$ is singular.) We will push these ideas further momentarily in 
	order to obtain more efficient algorithms for these computations.
\end{remark}

	We continue with the notation from the beginning of this section. Define
		\be \label{Eq: Def Eta}
			\eta(\ell_1, \ldots, \ell_m) = \sum_{T \subset G} \left( \prod_{e_k \not\subset T} \ell_k \right),
		\ee
where the sum is over all spanning trees $T$ of $G$. For example, $\eta(1, \dots, 1)$ is the number of spanning trees of $G$. As each spanning tree is the complement of $b_1 = \dim_\RR H^1(\Gamma, \RR)$ edges of $G$, we see that $\eta \in \ZZ[\ell_1, \ldots, \ell_m]$ is homogeneous of degree $b_1$. A useful alternative method for calculating $\eta$ is given by Kirchhoff's matrix-tree theorem. Let $Q$ be the combinatorial Laplacian matrix for $G$. If the vertices of $G$ are enumerated as $p_1, \ldots, p_n$, then $Q$ is an $n \times n$ matrix whose entries are given by 
	\benn
		Q_{ij} = \begin{cases} \sum_{e_k = \{p_i, *\}} \ell_k^{-1} & \text{if $i = j$} \\
			- \sum_{e_k = \{p_i, p_j\} } \ell_k^{-1} & \text{if $i \not= j$} \\
			0 & \text{otherwise.} \end{cases}
	\eenn
The first summation is over all non loop-edges $e_k$ containing the vertex $p_i$ and the second is over edges $e_k$ with vertices $p_i$ and $p_j$. 

	The kernel of $Q$ is generated by the vector $[1, 1, \ldots, 1]^T$, and a simple consequence is that all of the first cofactors of $Q$ are equal. (Use the easily verified fact $Q \cdot \operatorname{adj}(Q) = 0$.) Denote their common value by $\kappa^*(G)$. Then the matrix-tree theorem says
	\be \label{Eqn: Define Eta}
		\eta(\ell_1, \ldots, \ell_m) = \kappa^*(G) \prod_{k=1}^m \ell_k .
	\ee 
Compare \cite[\S II.3, Thm.~12]{Bollobas_Modern_Graph_Theory_1998}.

\begin{lem} \label{Lem: Modified Resistance}
	Let $\Gamma$ be a metric graph with first Betti number $b_1 = \dim_\RR H^1(\Gamma, \RR)$, and let $G$ be a model of $\Gamma$ with 
	vertex set $\{p_1, \ldots, p_n\}$ and edge set $\{e_1, \ldots, e_m\}$. For each pair
	of indices $1 \leq i,j \leq n$, define
		\benn
			R_{ij}(\ell_1, \ldots, \ell_m) =  r(p_i, p_j)\ \eta(\ell_1, \ldots, \ell_m).
		\eenn
	Then $R_{ij}$ is a homogeneous polynomial of degree $b_1+1$ with integer coefficients.	
\end{lem}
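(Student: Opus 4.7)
The plan is to give an explicit combinatorial formula for $R_{ij}$ as a signed sum over spanning 2-forests of $G$ that separate $p_i$ from $p_j$, in direct analogy with the description of $\eta$ as a sum over spanning trees. Since every spanning tree has $n-1$ edges and every spanning 2-forest has $n-2$ edges, the degree count will drop out immediately from the relation $b_1 = m - n + 1$.

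First I would reduce the computation of $r(p_i, p_j)$ to a purely combinatorial problem on $G$. The effective resistance is defined by the Laplace equation $\Delta_x f = \delta_{p_i} - \delta_{p_j}$ with normalization $f(p_j) = 0$ and $r(p_i, p_j) = f(p_i)$. Because $f$ is linear on each segment of $\Gamma$ (its second derivative along any edge vanishes off of vertices), $f$ is determined by its values on $V(G)$, and the equation reduces to the linear system $Q\,v = e_i - e_j$, where $Q$ is the combinatorial Laplacian introduced just before \eqref{Eqn: Define Eta}. The kernel of $Q$ is spanned by $[1,\ldots,1]^T$, so we can pin down a unique solution by further imposing $v_j = 0$ and solving the invertible system $\tilde{Q}\,\tilde{v} = \tilde{e}_i$, where $\tilde{Q}$ is the principal submatrix of $Q$ obtained by striking the row and column indexed by $p_j$. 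Cramer's rule then gives
\[
r(p_i, p_j) \;=\; v_i \;=\; \frac{\det \tilde{Q}_{i \mid i}}{\det \tilde{Q}},
\]
where $\tilde{Q}_{i\mid i}$ is the matrix obtained from $\tilde{Q}$ by striking the row and column indexed by $p_i$ as well.

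Next I would invoke two applications of Kirchhoff's matrix-tree theorem, at the level of weighted $\ell_k^{-1}$ edge-conductances. The standard version (as in \cite[\S II.3]{Bollobas_Modern_Graph_Theory_1998}) identifies $\det \tilde{Q}$ with $\kappa^*(G) = \sum_{T}\prod_{e \in T}\ell_e^{-1}$, the sum over spanning trees $T$ of $G$. Its ``all-minors'' refinement identifies the doubly-reduced minor $\det \tilde{Q}_{i\mid i}$ with $\sum_{F}\prod_{e \in F}\ell_e^{-1}$, summed over spanning 2-forests $F$ of $G$ whose two components separate $p_i$ from $p_j$. Multiplying both numerator and denominator by $\prod_{k=1}^{m}\ell_k$ clears the reciprocal weights: the denominator becomes $\eta(\ell_1,\ldots,\ell_m)$ by \eqref{Eqn: Define Eta}, while the numerator becomes
\[
\xi_{ij}(\ell_1,\ldots,\ell_m) \;=\; \sum_{F} \prod_{e_k \notin F}\ell_k,
\]
with the sum still running over spanning 2-forests of $G$ separating $p_i$ from $p_j$. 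By construction, $R_{ij} = r(p_i,p_j)\,\eta = \xi_{ij}$ is a polynomial in $\ell_1, \ldots, \ell_m$ whose coefficients are nonnegative integers (each equal to $0$ or $1$).

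Finally I would verify the degree claim. Each spanning 2-forest of $G$ has exactly $n-2$ edges, so its complement has $m - (n-2) = b_1 + 1$ edges, using $b_1 = m - n + 1$ from \eqref{Eq: Betti number}. Hence every monomial of $\xi_{ij}$ has total degree exactly $b_1 + 1$, and $R_{ij}$ is homogeneous of this degree, as required. The main obstacle here is mostly bookkeeping: invoking the all-minors form of the matrix-tree theorem to identify $\det \tilde{Q}_{i\mid i}$ with the weighted spanning 2-forest count is standard but non-trivial, and one should remark on why loop edges (which carry no $\ell_k^{-1}$ entry in $Q$) cause no trouble, since they automatically appear in the complement of every spanning tree or 2-forest and so contribute only to the overall degree.
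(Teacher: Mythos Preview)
Your argument is correct and follows the same skeleton as the paper's proof: reduce $r(p_i,p_j)$ to the discrete Laplace equation $Q\mathbf{f}=\mathbf{e}_i-\mathbf{e}_j$, apply Cramer's rule to get $r(p_i,p_j)=\det(\tilde Q_{i\mid i})/\det(\tilde Q)$, clear denominators by multiplying through by $\prod_k\ell_k$, and then invoke a matrix--tree identity to see that the numerator is a homogeneous integer polynomial of degree $b_1+1$.

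The only substantive difference is in the last step. You appeal to the all-minors matrix--tree theorem to identify $\det(\tilde Q_{i\mid i})\prod_k\ell_k$ with the spanning $2$-forest polynomial $\xi_{ij}=\sum_F\prod_{e_k\notin F}\ell_k$, summed over spanning $2$-forests separating $p_i$ from $p_j$. The paper instead observes that this same minor is a first cofactor of the Laplacian of the graph $G'$ obtained by \emph{fusing} $p_i$ and $p_j$, and hence equals $\kappa^*(G')$; multiplying by $\prod_k\ell_k$ then gives $R_{ij}=\eta_{G'}$, the ordinary spanning-tree polynomial of $G'$. The two descriptions coincide under the evident bijection between spanning trees of $G'$ and spanning $2$-forests of $G$ separating $p_i$ from $p_j$. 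Your route yields the explicit $2$-forest formula (and the pleasant observation that all coefficients are $0$ or $1$) at the cost of invoking the stronger all-minors theorem; the paper's route needs only the classical matrix--tree theorem, but applied to a second graph. Either way the homogeneity and degree are immediate, and your remark about loop edges is apt and handles the one bookkeeping subtlety.
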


\begin{proof}
	For $i = j$, we see immediately that $R_{ij} = 0$. By symmetry, it suffices to prove the result for fixed $i$ and $j$ with $1 \leq i < j \leq n$. Set $y= p_i$ and $z = p_j$. By definition, $r(y,z) = j_z(y,y)$, where $f(x) = j_z(x,y)$ is the fundamental solution to the Laplace equation $\Delta_x f = \delta_y - \delta_z$ satisfying $f(z)=0$.  We may compute the value of $j_z (p_k, y)$ for any $k$ using the discrete Laplace equation
	\be \label{Eqn: Laplace}
		Q\mathbf{f} = \mathbf{e}_i - \mathbf{e}_j, \qquad f_j = 0,
	\ee
where $\mathbf{e}_k$ is the $k$th standard basis vector of $\RR^n$ and $\mathbf{f} = \sum_k f_k \mathbf{e}_k$. (Compare \cite[\S5--6]{Baker_Faber_2006}.) By definition, if $\mathbf{f}$ is the unique solution, then $j_z(p_k, y) = f_k$.

	The unique solution to the system of equations \eqref{Eqn: Laplace} gives rise to a solution of
	\be \label{Eqn: Modified Laplace}
		Q^{(j)}\mathbf{h} = \mathbf{e}_i,
	\ee
where $Q^{(j)}$ is the $(n-1) \times (n-1)$ matrix given by deleting the $j$th row and column from $Q$. The correspondence is given by 
	\[
	 	\mathbf{f} \mapsto \mathbf{h} = \sum_{k =1}^{j-1} f_k \mathbf{u}_k 
			+ \sum_{k = j+1}^n f_k \mathbf{u}_{k-1}.
	\]
Here $\mathbf{u}_k$ is the $k$th standard basis vector of $\RR^{n-1}$. Since $\det(Q^{(j)}) = \kappa^*(G) \not= 0$ by the matrix-tree theorem, the matrix $Q^{(j)}$ is invertible and the solution to \eqref{Eqn: Modified Laplace} is unique.

	Define $Q^{(j)}_i$ to be the matrix $Q^{(j)}$ with the $i$th column replaced by the $i$th standard basis vector $\mathbf{u}_i$. The above argument and Cramer's rule shows that if $\mathbf{h}$ is the solution to \eqref{Eqn: Modified Laplace}, 
	\benn
		\ba
			r(p_i, p_j) = h_i 
				&= \frac{\det(Q^{(j)}_i)}{\det(Q^{(j)})} 
				= \frac{\det(Q^{(j)}_i) \prod_{k=1}^m \ell_k}{\det(Q^{(j)}) \prod_{k=1}^m \ell_k} 
				= \frac{\det(Q^{(j)}_i) \prod_{k=1}^m \ell_k}{\eta(\ell_1, \ldots, \ell_m)} \\
				&\Longrightarrow R_{ij}(\ell_1, \ldots, \ell_m) = \det(Q^{(j)}_i) \prod_{k=1}^m \ell_k.
		\ea
	\eenn
To complete the proof of the lemma, we must show that $\det(Q^{(j)}_i) \prod_{k=1}^m \ell_k$ is a homogeneous polynomial of degree $b_1+1$ in the lengths $\ell_1, \ldots, \ell_m$ with integer coefficients.
	
	To see that $R_{ij}$ is a polynomial, we assume that $i=1$ and $j=2$, perhaps after relabeling the vertices. Let $G'$ be the graph given by fusing the vertices $p_1$ and $p_2$. If $p_{12}$ is the image of $p_1$ and $p_2$ in $G'$, then we may write $V(G') = \{p_{12}, p_3, \ldots, p_n\}$ and $E(G') = E(G)$. Let $Q'$ be the Laplacian matrix of $G'$. As the edges adjacent to $p_3, \ldots, p_n$ are unaffected by fusing $p_1$ and $p_2$, the lower right $(n-2) \times (n-2)$ submatrix of $Q'$ agrees with that of $Q_1^{(2)}$. Hence $\det(Q_1^{(2)}) = \kappa^*(G')$ in the notation preceding Lemma~\ref{Lem: Modified Resistance}. It follows that
	\be \label{Eqn: Poly Formula}
		R_{ij}(\ell_1, \ldots, \ell_m) = \det(Q_1^{(2)}) \prod_{k=1}^m \ell_k = \eta_{G'}(\ell_1, \ldots, \ell_m),
	\ee
where $\eta_{G'}$ is the polynomial associated to the graph $G'$ as in \eqref{Eqn: Define Eta}. The definition~\eqref{Eq: Def Eta} of $\eta_{G'}$ shows that it is a polynomial in $\ell_1, \ldots, \ell_m$ with integer coefficients. Moreover, the complement of any spanning tree in $G'$ consists of $b_1+1$ edges, and so $R_{ij}$ is homogeneous of degree $b_1+1$.
\end{proof}

\begin{lem} \label{Lem: Modified Edge Resistance}
	Let $\Gamma$ be a metric graph and $G$ be a model of $\Gamma$ with vertex set $\{p_1, \ldots, p_n\}$
	and edge set $\{e_1, \ldots, e_m\}$. For an edge $e_k = \{p_i, p_j\}$ of $G$, set
	$R_k=  R_{ij}$ to be the polynomial defined in the previous lemma. Then $\ell_k$ divides $R_k$ as polynomials
	in $\ZZ[\ell_1, \ldots, \ell_m]$ and 
		\benn
			\frac{R_k(\ell_1, \ldots, \ell_m)}{\ell_k} = \eta(\ell_1, \ldots, \ell_m)|_{\ell_k = 0}.
		\eenn
\end{lem}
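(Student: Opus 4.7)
The plan is to deduce both claims from the identification $R_k = \eta_{G'}(\ell_1,\dots,\ell_m)$ established in equation~\eqref{Eqn: Poly Formula}, where $G'$ is the multigraph obtained from $G$ by fusing the two endpoints $p_i$ and $p_j$ of the edge $e_k$. The crucial observation is that under this fusion $e_k$ becomes a loop in $G'$.

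Since a loop edge can never appear in a spanning tree, every spanning tree $T'$ of $G'$ satisfies $e_k \not\subset T'$, and consequently $\ell_k$ appears as a factor in each monomial of the defining sum
$$\eta_{G'}(\ell_1,\dots,\ell_m) = \sum_{T' \subset G'} \prod_{e_l \not\subset T'} \ell_l.$$
This immediately yields $\ell_k \mid R_k$ in $\ZZ[\ell_1,\dots,\ell_m]$.

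To compute the quotient I would set up a bijection between spanning trees of $G'$ and spanning trees of $G$ that contain $e_k$: contracting $e_k$ in a spanning tree $T$ of $G$ with $e_k \in T$ produces a spanning tree $T'$ of $G'$. Conversely, given a spanning tree $T'$ of $G'$, viewing its edges inside $G$ gives a forest on $n$ vertices with $n-2$ edges, hence exactly two connected components, which must separate $p_i$ from $p_j$; adjoining $e_k$ then restores a spanning tree $T$ of $G$ that contains $e_k$. Under this correspondence the complementary edge sets match up as $\{e_l : e_l \not\subset T'\} = \{e_k\} \cup \{e_l : e_l \not\subset T\}$, so rewriting the sum yields
$$R_k \;=\; \eta_{G'} \;=\; \ell_k \sum_{\substack{T \subset G \\ e_k \in T}} \prod_{e_l \not\subset T} \ell_l.$$

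Finally, setting $\ell_k = 0$ in the definition~\eqref{Eq: Def Eta} of $\eta$ annihilates precisely those monomials indexed by spanning trees $T$ with $e_k \not\subset T$, leaving
$$\eta(\ell_1,\dots,\ell_m)\big|_{\ell_k = 0} \;=\; \sum_{\substack{T \subset G \\ e_k \in T}} \prod_{e_l \not\subset T} \ell_l,$$
which is exactly $R_k/\ell_k$. The only real obstacle is verifying the spanning-tree bijection; but this reduces to the standard observation that deleting any single edge from a tree disconnects it into exactly two components, and conversely that contracting an edge of a tree yields a tree with one fewer vertex.
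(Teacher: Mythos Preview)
Your proof is correct and follows essentially the same combinatorial path as the paper, with one pleasant shortcut. The paper re-enters the Laplacian cofactor computation from the proof of Lemma~\ref{Lem: Modified Resistance}, identifies $\det(Q_1^{(2)})$ with $\kappa^*(G/e_1)$ for the \emph{contracted} graph, and only then invokes the bijection between spanning trees of $G/e_1$ and spanning trees of $G$ containing $e_1$. You instead start directly from the formula $R_k = \eta_{G'}$ already obtained in~\eqref{Eqn: Poly Formula} for the \emph{fusion} graph $G'$, observe that $e_k$ is a loop there, and run the same spanning-tree bijection. Since $G'$ and $G/e_k$ differ only by that loop, which is invisible to spanning trees, the two arguments are really the same; yours just avoids reopening the matrix algebra.
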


\begin{proof}
	The proof is similar to that of the previous lemma, except that we will contract the edge $e_k$, rather than fusing its endpoints. After reordering the edges and vertices if necessary, we may assume that the edge of interest is $e_1 = \{p_1, p_2\}$. Using the notation and strategy of the previous proof, we find 
	\benn
		\frac{R_1(\ell_1, \ldots, \ell_m)}{\ell_1} = \det(Q_1^{(2)})\prod_{k=2}^m \ell_k. 
	\eenn
	
	Define a new graph $G/e_1$ given by contracting the edge $e_1$. Let $p_{12}$ be the image of the vertices $p_1$ and $p_2$ in $G/e_1$. Then we may make the identifications $E(G/e_1) = E(G) \smallsetminus \{e_1\}$ and $V(G/ e_1) = \{p_{12}, p_3, \ldots, p_n\}$. Let $Q(G/ e_1)$ be the Laplacian matrix of $G/e_1$. It is not difficult to see that the lower right $(n-2) \times (n-2)$ submatrix of $Q(G/e_1)$ agrees with that of $Q_1^{(2)}$, and so $\det(Q_1^{(2)}) = \kappa^*(G/e_1)$ in the notation preceding Lemma~\ref{Lem: Modified Resistance}. In particular, 
	\benn
		\frac{R_1(\ell_1, \ldots, \ell_m)}{\ell_1} = \eta_{G/e_1}(\ell_2, \ldots, \ell_m),
	\eenn
where $\eta_{G/e_1}$ is the polynomial associated to the graph $G / e_1$ as in \eqref{Eqn: Define Eta}.

	To complete the proof, we must show that $\eta_{G/e_1}(\ell_2, \ldots, \ell_m) = \eta(0, \ell_2, \ldots, \ell_m)$. But this follows from its definition and the bijective correspondence between spanning trees of $G / e_1$ and spanning trees of $G$ containing the edge $e_1$.
\end{proof}

\begin{prop} \label{Prop: Special Forms}
	Let $\agam = (\Gamma, q)$ be a pm-graph of genus $g \geq 1$ and first Betti number $b_1$. Fix a model $G$ 
	of  $\agam$. With the notation above, there exists a homogeneous polynomial 
	$\omega_1 \in \QQ[\ell_1, \ldots, \ell_m]$ of degree $2b_1+1$, depending only on the combinatorial type of $G$
	and the function $q$, such that
		\benn
			\Zphi{\agam} = \frac{g-1}{6g} \ell(\Gamma) 
				- \frac{\omega_1(\ell_1, \ldots, \ell_m)}{\eta(\ell_1, \ldots, \ell_m)^2}.
		\eenn
\end{prop}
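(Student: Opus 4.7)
The plan is to combine Corollary~\ref{Cor: No tau}, which already expresses $\Zphi{\agam}$ as a rational combination of $\Zepsilon{\agam}$, $r(K,K)$, and $\ell(\Gamma)$, with the explicit formula for $\Zepsilon{\agam}$ from Lemma~\ref{Lem: Computing epsilon}, and then use the polynomial interpretations of effective resistances provided by Lemmas~\ref{Lem: Modified Resistance} and~\ref{Lem: Modified Edge Resistance} to clear denominators. Concretely, the target identity $\Zphi{\agam} = \tfrac{g-1}{6g}\ell(\Gamma) - \omega_1/\eta^2$ forces the definition
\[
\omega_1 \;=\; \eta^2\!\left[\left(\tfrac{g-1}{6g}+\tfrac{1}{4}\right)\ell(\Gamma) \;-\; \tfrac{5g-2}{4(g-1)}\,\Zepsilon{\agam} \;+\; \tfrac{3}{8(g-1)}\,r(K,K)\right],
\]
so the content of the proposition reduces to showing that this expression is a homogeneous polynomial of degree $2b_1+1$ in $\QQ[\ell_1,\ldots,\ell_m]$ whose coefficients depend only on the combinatorial type of $G$ and the function $q$.

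Next, I would substitute the three-term formula from Lemma~\ref{Lem: Computing epsilon} for $\Zepsilon{\agam}$ and the expansion $r(K,K)=\sum_{i,j}K(p_i)K(p_j)r(p_i,p_j)$. Every effective resistance $r(p_i,p_j)$ gets replaced by $R_{ij}/\eta$ via Lemma~\ref{Lem: Modified Resistance}, and every factor $F(e_k)=1-r(e_k)/\ell_k$ is rewritten as $(\eta-\eta|_{\ell_k=0})/\eta$, using Lemma~\ref{Lem: Modified Edge Resistance} in the form $R_k/\ell_k=\eta|_{\ell_k=0}$ to show $F(e_k)\cdot\eta$ is a polynomial of degree $b_1$. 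After multiplying through by $\eta^2$, every term becomes a product of polynomials drawn from $\{\eta,\ R_{ij},\ F(e_k)\eta,\ \ell_k\}$ with rational constants and integer weights coming from $q(p_i)$ and $K(p_i)=v(p_i)-2+2q(p_i)$.

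The final step is a degree count, which is the only place one must be careful. Recall that $\eta$ is homogeneous of degree $b_1$, each $R_{ij}$ has degree $b_1+1$, and each $F(e_k)\eta$ has degree $b_1$. Thus: the term $q(p_i)K(p_j)r(p_i,p_j)\eta^2$ becomes $q(p_i)K(p_j)R_{ij}\eta$ of degree $2b_1+1$; the term $F(e_k)^2\ell_k\eta^2$ becomes $(F(e_k)\eta)^2\ell_k$ of degree $2b_1+1$; the mixed term $K(p_i)F(e_k)r(p_i,e_k^{\pm})\eta^2$ becomes $K(p_i)(F(e_k)\eta)R_{i,e_k^{\pm}}$ of degree $2b_1+1$; and $\ell(\Gamma)\eta^2$ and $r(K,K)\eta^2$ are both evidently of degree $2b_1+1$. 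Since $\eta$ and the $R_{ij}$ depend only on the combinatorial type of $G$ (by the matrix-tree theorem and the fusion construction in the proof of Lemma~\ref{Lem: Modified Resistance}), and the remaining data $q(p_i),K(p_i)$ depend only on $G$ and $q$, the assembled $\omega_1$ has the required form.

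The only potential obstacle is bookkeeping: one must verify that no hidden cancellation forces $\omega_1$ to have lower-degree inhomogeneous pieces (it does not, because every summand is separately homogeneous of degree $2b_1+1$) and that the overall rational coefficients do not develop spurious $g-1$ singularities at $g=1$; for the proposition as stated with $g\ge1$, the case $g=1$ can be treated by going back to the original definition $\Zphi{\agam}=3g\Ztau{\agam}-\tfrac14(\Zepsilon{\agam}+\ell(\Gamma))$ and applying the same polynomial substitutions directly, bypassing Corollary~\ref{Cor: No tau}.
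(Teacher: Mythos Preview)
Your approach is essentially identical to the paper's: define $\omega_1$ so that the target identity holds tautologically, then use Corollary~\ref{Cor: No tau} to reduce to showing that $\Zepsilon{\agam}\,\eta^2$ and $r(K,K)\,\eta^2$ are homogeneous of degree $2b_1+1$, and verify this via Lemma~\ref{Lem: Computing epsilon} together with Lemmas~\ref{Lem: Modified Resistance} and~\ref{Lem: Modified Edge Resistance}. Your explicit degree-count for each of the three summands in Lemma~\ref{Lem: Computing epsilon} is exactly what the paper does in its final displayed equation for $\Zepsilon{\agam}\,\eta^2$, and your observation that the $g=1$ case must bypass Corollary~\ref{Cor: No tau} and return to the original definition of $\Zphi{\agam}$ is a correct refinement that the paper's proof silently omits.
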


\begin{proof}
	Write $\eta = \eta(\ell_1, \ldots, \ell_m)$ for simplicity. Let us begin by defining $\omega_1$ as the following
 function of $\ell_1, \ldots, \ell_m$:
		\benn
			\omega_1(\ell_1, \ldots, \ell_m) := \frac{g-1}{6g}\ell(\Gamma) \ \eta^2 - \Zphi{\agam}\eta^2.
		\eenn
We must prove that $\omega_1$ is a homogeneous polynomial of degree $2b_1+1$ with rational coefficients. 

	As $\ell(\Gamma) = \ell_1 + \cdots +\ell_m$, it is clear that $\ell(\Gamma) \ \eta^2$ is homogeneous of the correct degree, so we are reduced to showing that $\Zphi{\agam} \eta^2$ has the same property. By Corollary~\ref{Cor: No tau} it suffices to verify that $\Zepsilon{\agam} \eta^2$ and $r(K,K) \  \eta^2$ are homogeneous 
degree-$(2b_1+1)$ polynomials with rational coefficients. The latter is true by 
Lemma~\ref{Lem: Modified Resistance}.
	
	By Lemma~\ref{Lem: Computing epsilon} and the fact that $F(e_k) = 1 - r(e_k) / \ell_k$, we have
	\benn
		\ba
			\Zepsilon{\agam} \eta^2 &= \frac{\eta}{g}\sum_{i,j} q(p_i) K(p_j) R_{ij} +
				\frac{g-1}{3g} \sum_k \left( \eta - \frac{R_k}{\ell_k}\right)^2 \ell_k \\
				& \qquad + \frac{1}{2g} \sum_i K(p_i) \sum_k \left( \eta - \frac{R_k}{\ell_k}\right)
					\Big(\eta \cdot r(p_i, e_k^-) + \eta \cdot r(p_i, e_k^+) \Big).
		\ea
	\eenn
Referring once more to Lemmas~\ref{Lem: Modified Resistance} and~\ref{Lem: Modified Edge Resistance}, we see that this last expression is indeed a polynomial of the correct degree in the lengths $\ell_1, \ldots, \ell_m$ with rational coefficients. 
\end{proof}

\begin{remark}
	Having computed a number of examples, it seems to be the case that the polynomial $\omega_1$ 
	defined in the previous proposition is divisible by the polynomial $\eta$. While
	we cannot prove this in general, a strategy to prove it in the special case of hyperelliptic graphs is suggested by
	\cite[\S3.3]{Yamaki_Bogomolov_Hyperelliptic_2008}. 
\end{remark}

\begin{remark} \label{Rem: Algorithms2}
	The results of this section and their proofs indicate the algorithms we have implemented in \textit{Mathematica}.
	In all of these algorithms, the input is a weighted graph. If for example one wanted to 
	calculate $\Zphi{\agam}$ for a metric graph, the best way to do it is to pick a model of $\agam$ with very few 
	vertices, and then apply these implementations to the model. 
	\bi
		\item[1.] The algorithm used to calculate effective resistance is given by the beginning of the proof of 
			Lemma~\ref{Lem: Modified Resistance}. 
			
		\item[2.] To calculate $\Zepsilon{\agam}$, we use the formula given by the proof of
			Proposition~\ref{Prop: Special Forms}. 
			
		\item[3.] To calculate $\Zphi{\agam}$, we use the proof of 
		Proposition~\ref{Prop: Special Forms} to compute the polynomial $\omega_1$ and
		then apply the formula in the statement of the proposition. See the next remark for an explicit formula
		for $\omega_1$.
	\ei
\end{remark}

\begin{remark} \label{Rem: Explicit Formula}
	For easy reference and later use, we now give an explicit formula for the polynomial $\omega_1$
	associated to a metric graph $\agam = (\Gamma, q)$ of genus $g$. This formula is easily derived from 
	the arguments in this section. For simplicity, we use slightly different notation here; let 
	$R(x,y) = \eta \cdot r(x,y)$. Also, we write $e = \{e^-, e^+\}$ for some choice of orientation on each edge $e$. 
	Then
		\benn
			\ba
				\omega_1 &= \frac{5g-2}{12g}\left[ \ell(\Gamma) \ \eta^2
					 	- \sum_k \left(\eta - \frac{R(e_k^-, e_k^+)}{\ell_k} \right)^2 \ell_k \right]\\
					 & \qquad + \frac{\eta}{8g(g-1)}\sum_{i,j} K(p_i) R(p_i, p_j) 
					 	\Big( 3gK(p_j) -2(5g-2) q(p_j) \Big) \\
					 & \qquad - \frac{5g-2}{8g(g-1)} \sum_i K(p_i) \sum_k 
						\left(\eta - \frac{R(e_k^-, e_k^+)}{\ell_k}\right)
						\Big(R(p_i, e_k^-) + R(p_i, e_k^+) \Big).
			\ea
		\eenn
	By symmetry, this formula is independent of the choice of edge orientations.

\end{remark}

	
\section{The Proof of Theorem~\ref{Thm: Graph Bounds}}
\label{Sec: Reductions}

	Our goal for this section is to give the proof of Theorem~\ref{Thm: Graph Bounds}, which we restate for the reader's convenience. 

\begin{thm*}
	Let $c(2)=\frac{1}{27}$, $c(3)= \frac{2}{81}$, and $c(4) = \frac{1}{36}$. Then for any polarized metric graph 
	of genus $g = 2, 3$, or $4$, we have
		\benn
				\Zphi{\agam} \geq c(g) \ell_0\left(\agam\right)
					+\sum_{i \in (0, g/2]} \frac{2i(g-i)}{g}\ell_i\left(\agam\right).
		\eenn
\end{thm*}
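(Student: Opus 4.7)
The plan is to reduce the problem to a finite list of explicit polynomial inequalities---one for each stable combinatorial type of pm-graph of genus $g \in \{2,3,4\}$---and then verify each by direct computation in a computer algebra system.

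First, I would establish a continuity/reduction step. Using the formulas of Proposition~\ref{Prop: Special Forms} and Remark~\ref{Rem: Explicit Formula}, both $\Zphi{\agam}$ and the length invariants $\ell_i\left(\agam\right)$ depend continuously on the edge lengths of a fixed model, and contracting an edge whose length tends to zero (when this still produces a pm-graph) sends the inequality for $\agam$ to the inequality for the contracted pm-graph. Consequently, it suffices to verify the inequality on \emph{stable} pm-graphs---those with no weight-zero vertex of valence less than $3$. For $g \leq 4$ there are only finitely many such combinatorial types $(G,q)$, which can be enumerated.

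Second, I would reformulate the inequality polynomially. For a stable pm-graph every interior point of an edge has a fixed type, so $\ell_i\left(\agam\right) = \sum_{\mathrm{type}(e) = i} \ell(e)$ is a linear form in the edge lengths. Combined with Proposition~\ref{Prop: Special Forms}, the inequality becomes
\[
P_{G,q}(\ell) \ := \ \Bigl[\tfrac{g-1}{6g}\ell(\Gamma) - c(g)\ell_0\left(\agam\right) - \sum_{i \in (0, g/2]} \tfrac{2i(g-i)}{g}\ell_i\left(\agam\right)\Bigr]\,\eta(\ell)^2 \ -\ \omega_1(\ell) \ \geq \ 0
\]
for all positive edge-length vectors $\ell = (\ell_1, \dots, \ell_m)$, where $\eta$ and $\omega_1$ are the polynomials furnished by Proposition~\ref{Prop: Special Forms}. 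Since $P_{G,q}$ is homogeneous, it is enough to check non-negativity on the simplex $\ell_k \geq 0$, $\sum_k \ell_k = 1$.

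Third, for each stable combinatorial type $(G,q)$, I would expand $P_{G,q}$ in \emph{Mathematica} using the explicit formula of Remark~\ref{Rem: Explicit Formula}, and then exhibit a positivity certificate: ideally a monomial expansion with only non-negative coefficients in the variables $\ell_k$, or failing that a decomposition as a non-negative combination of products of $\ell_k$'s with squares. Graph automorphisms can be used to reduce the number of independent edge-length variables in each case. The main obstacle is the combinatorial bulk: the genus-$4$ stable types are numerous, their associated graphs have up to nine edges, and the resulting polynomials contain many terms with large coefficients. Moreover, the constants $c(g)$ are tuned so that the inequality is sharp on certain extremal graphs (such as banana or $\theta$-type configurations), so any looseness in the argument must be tracked carefully; identifying those extremal configurations in advance will be essential both for choosing good variables and for spotting the correct positivity certificate in each case.
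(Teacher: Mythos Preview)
Your outline is sound in principle---the inequality is homogeneous and, for each fixed combinatorial type, reduces to a polynomial positivity statement that can in principle be checked by computer---but it diverges from the paper's proof in two substantial ways, and both differences cost you efficiency.

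First, the paper does not stop at ``stable'' types. It uses Zhang's additivity of $\varphi$ over pointed sums (Lemmas~\ref{Lem: Additivity} and~\ref{Lem: Decomp}) to peel off the intervals of positive type \emph{exactly}, which is where the coefficients $2i(g-i)/g$ come from; then it glues on small circles to kill $q$ (Lemma~\ref{Lem: q equals 0}), and finally pulls apart high-valence vertices using the continuity of Proposition~\ref{Prop: Continuity} to reach irreducible cubic graphs with $q\equiv 0$ (Lemma~\ref{Lem: Reduction Lemma}). The upshot is that only eight graphs need to be checked across $g=2,3,4$, whereas your enumeration of all stable $(G,q)$ runs to several dozen and forces you to re-derive the $\ell_i$ coefficients computationally rather than structurally.

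Second, the paper does not attempt a direct positivity certificate for $P_{G,q}$. Instead it interposes the symmetric-function inequality $\sigma_3/\sigma_2 \le \frac{m-2}{3m}\,\ell(\Gamma)$ (Proposition~\ref{Prop: Verify}): one only needs a constant $A$ with $A\,\sigma_3\,\eta^2 - \omega_1\,\sigma_2 \ge 0$ coefficientwise, which is a \emph{linear} feasibility problem in $A$. This sidesteps the difficulty you flag---that $c(g)$ is tuned so the inequality is sharp on extremal graphs, so a naive monomial-positivity or SOS search on $P_{G,q}$ may be delicate. Your route would eventually succeed, and might even yield sharper constants (the paper notes its $c(3),c(4)$ are not optimal precisely because of the $\sigma_3/\sigma_2$ slack), but the paper's reductions make the computation dramatically lighter.
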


\begin{remark}
	The constant $c(2)$ is sharp, but $c(3)$ and $c(4)$ are not optimal due to the use of a wasteful estimate in 	
	Proposition~\ref{Prop: Verify} below. For example, further computational evidence suggests that one could 
	instead use $c(3) = 17 / 288$, but this has not been rigorously verified.
\end{remark}

	The inequality in the theorem is a special case of the following more general conjecture:
	
\begin{conjecture}[{\cite[Conj.~4.1.1]{Zhang_Gross-Schoen_Cycles_2008}}] 	\label{Conj: Zhang}
	Let $g \geq 2$ be an integer. There exists a positive constant $c(g)$ such that for any pm-graph $\agam$ of 
	genus $g$, 
		\benn
				\Zphi{\agam} \geq c(g) \ell_0\left(\agam\right)
					+\sum_{i \in (0, g/2]} \frac{2i(g-i)}{g}\ell_i\left(\agam\right).
		\eenn
\end{conjecture}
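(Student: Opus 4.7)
The plan is to reduce the inequality to a finite, essentially polynomial, verification carried out one combinatorial type at a time.

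First, note that both sides of the desired inequality have a natural segment-local structure. The right-hand side is by definition $\sum_{i} \alpha_i(g)\, \ell_i(\agam)$, where $\alpha_0(g)=c(g)$ and $\alpha_i(g)=\frac{2i(g-i)}{g}$ for $i>0$, so it is an integral over $\Gamma$ against a density depending only on the type of each point. Lemma~\ref{Lem: Decomp} (alluded to in \S\ref{Sec: Main Definitions}) encodes a parallel decomposition of $\Zphi{\agam}$ in terms of contributions by type. I would use this to try to establish the inequality pointwise (edge-by-edge), reducing a global statement on $\agam$ to a local statement on each segment and its type.

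Second, I would fix a combinatorial model $G$ and weight function $q$ and exploit the structural formula from Proposition~\ref{Prop: Special Forms} and Remark~\ref{Rem: Explicit Formula}, which expresses
\[
\Zphi{\agam} \;=\; \frac{g-1}{6g}\,\ell(\Gamma)\;-\;\frac{\omega_1(\ell_1,\ldots,\ell_m)}{\eta(\ell_1,\ldots,\ell_m)^2},
\]
as a homogeneous rational function in the edge lengths. The difference $\Zphi{\agam} - \sum_i \alpha_i(g)\,\ell_i(\agam)$ is then also a homogeneous rational function, and since $\eta>0$ throughout the positive orthant, clearing denominators turns the desired inequality into a polynomial non-negativity statement in $\ell_1,\ldots,\ell_m$. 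The continuity under edge contractions developed in \S\ref{Sec: Continuity} should further allow one to reduce to pm-graphs whose models are \emph{cubic} (every unweighted vertex of valence exactly $3$): non-cubic types arise as boundary limits ($\ell_k \to 0$) of cubic ones, so if the inequality holds on all cubic models of genus $g$, it propagates to the degenerations by continuity and homogeneity.

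Third, for each $g\in\{2,3,4\}$, I would enumerate the finitely many isomorphism classes of cubic pm-graphs of genus $g$ (equivalently, dual graphs of stable curves of genus $g$) and, for each class, apply the polynomial verification. Because the polynomials involved grow rapidly in the number of edges and are not obviously sums of squares, I would follow a uniform bounding strategy (the one carried out in Proposition~\ref{Prop: Verify}, using inequalities due to Xinyi Yuan) to dominate each $\omega_1/\eta^2$ term by an explicit combination of $\ell_i$'s. The constants $c(2)=\frac{1}{27}$, $c(3)=\frac{2}{81}$, $c(4)=\frac{1}{36}$ should then drop out of these estimates; the fact that $c(3)$ and $c(4)$ are suboptimal, as flagged in the remark after the theorem, is a direct consequence of the coarseness of this bounding step rather than of the strategy itself.

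The principal obstacle is the combinatorial and computational scaling in the third step. The number of cubic pm-graph types grows quickly in $g$, and for genus $4$ the associated polynomial inequalities involve many variables and high total degree, well beyond what can be verified by hand. I would therefore automate both the enumeration of combinatorial types and the symbolic generation and bounding of the relevant polynomials in \emph{Mathematica} \cite{Mathematica_6.0.1.0}, using the explicit formula of Remark~\ref{Rem: Explicit Formula} and the algorithmic scheme outlined in Remark~\ref{Rem: Algorithms2}, and then summarize the per-graph outputs in \S\ref{Sec: Data}. A secondary difficulty is ensuring that the bounding strategy of Proposition~\ref{Prop: Verify} is robust enough to produce a \emph{uniform} positive constant $c(g)$ in each genus; sharpening the constants to optimal values would require a finer, more graph-specific argument, which I would not attempt here.
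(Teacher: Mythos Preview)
Your overall strategy matches the paper's: use the decomposition of Lemma~\ref{Lem: Decomp} to strip off the positive-type contributions, reduce by continuity arguments to a finite list of combinatorial types, and for each type bound $\omega_1/\eta^2$ via Proposition~\ref{Prop: Verify} with a \textit{Mathematica} verification. Note that the statement is a conjecture; the paper establishes it only for $g\le 4$ (Theorem~\ref{Thm: Graph Bounds}), and your proposal correctly targets those cases.

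There is, however, a genuine gap in your reduction chain. Lemma~\ref{Lem: Decomp} is not an ``edge-by-edge'' decomposition: it writes $\Gamma$ as a pointed sum of its maximal \emph{irreducible} subgraphs $\Gamma_j$ together with intervals $I_j$, and the terms $\frac{2i(g-i)}{g}\ell_i(\agam)$ come exactly and only from the intervals. After this step one still must prove $\varphi(\agam_j)\ge c(g)\,\ell(\Gamma_j)$ for each irreducible piece $\agam_j=(\Gamma_j,q_j)$, where $q_j$ is in general \emph{nonzero} because it encodes the genus and first Betti number of the part of $\Gamma$ retracted onto $\Gamma_j$. Your passage straight from ``pm-graphs'' to ``cubic pm-graphs'' via edge-contraction continuity skips Lemma~\ref{Lem: q equals 0}, which eliminates $q$ by gluing small circles at points with $q(p)>0$, invoking the explicit value $\varphi(\alg{C})=\frac{g-1}{6g}\ell(C)$ from Lemma~\ref{Lem: Constants for Circles}, and letting the circle length tend to zero. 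Only after both this step and the irreducibility reduction does one land in the class of \emph{irreducible} cubic pm-graphs with $q\equiv 0$, which is what is actually enumerated in Figure~\ref{Fig: All Graphs} and what admits the canonical model with $2g-2$ vertices and $3g-3$ edges used in Proposition~\ref{Prop: Verify}. Without the $q\equiv 0$ reduction your third-step enumeration is over the wrong, and substantially larger, class of graphs.
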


\begin{remark}
	Based on empirical evidence, we assert further that the conjecture should hold for polarized metric graphs of 
	arbitrary genus $g \geq 2$ with
		\benn
			c(g) = \frac{g-1}{27g}.
		\eenn
	This is discussed at the end of Section~\ref{Sec: Strategy}.
\end{remark}

Our plan is to show that, in order to prove the conjecture, it suffices to prove it for a finite simple class of graphs of genus $g$. (See Proposition~\ref{Reduce conjecture}.) This will require a number of reduction steps. In the first part, we give a number of results on the continuity of certain metric graph invariants with respect to edge contraction. In the second part we show that, in order to give a lower bound for $\Zphi{\agam}$ for all pm-graphs of a given genus $g$, it suffices to give a lower bound when $\agam = (\Gamma, q)$ is an irreducible cubic pm-graph of genus $g$ with $q$ identically zero. The basic idea is to reduce first to the case where $\Gamma$ has no points of type $i > 0$ by an additivity result on its components. Then we reduce to the case $q \equiv 0$ by adjoining circles along the support of $q$. Finally, we observe that an arbitrary metric graph with these properties can be obtained from an irreducible cubic one by letting some of the edge lengths tend to zero. This last step uses the continuity results in the first part. A similar strategy was used in \cite{Cinkir_Thesis_2007, Yamaki_Bogomolov_Hyperelliptic_2008, Yamaki_Bogomolov_Genus_3_2002} to reduce their respective questions to the case of cubic graphs.  In the third part we give the strategy used to complete the proof of Theorem~\ref{Thm: Graph Bounds} for graphs of a fixed genus. It requires a bit of difficult computation which we can accomplish when the genus is at most $4$. In the fourth part we summarize these calculations, having performed them in \textit{Mathematica}. The \textit{Mathematica} code and the actual calculation notebooks close out the article. 
	
\subsection{Continuity with Respect to Edge Contraction}
\label{Sec: Continuity}

	In this section we restrict our attention to pm-graphs $\agam = (\Gamma, q)$ with $q \equiv 0$ and genus $g \geq 2$. For simplicity, we will write $\agam = (\Gamma, 0)$. The results hold for an arbitrary pm-graph with only slight modification, but we have no need for the general case. 
	
	Let $\agam_1 = (\Gamma_1, 0)$ be a pm-graph, and let $G$ be a model of $\agam_1$. Fix an edge $e_1$ of $G$ \textit{that is not a loop edge}, and let us identify it with the corresponding segment of $\Gamma_1$. Define a family of pm-graphs $\agam_t = (\Gamma_t, 0)$ as follows. For each $t > 0$, the combinatorial structure of $\Gamma_t$ is given by $G$, but the length of the segment $e_1$ is $t$. For $t=0$, let $\Gamma_0$ be the metric graph given by contracting the segment $e_1$ to a point; the weighted graph $G / e_1$ serves as a model of $\Gamma_0$. Let $e_1 = \{p_1, p_2\}$, and let $p_{12}$ be the image of the segment $e_1$ under the contraction $\Gamma_1 \to \Gamma_0$. As $e_1$ is not a loop edge, the family $\Gamma_t$ defines a homotopy from $\Gamma_1$ to $\Gamma_0$. In particular, $g(\agam_t)$ is constant for all $t$. 
		
\begin{prop} \label{Prop: Continuity}
	With notation as in the previous paragraph,
		\benn
			\Zphi{\agam_0} = \lim_{t \to 0} \Zphi{\agam_t}.
		\eenn
\end{prop}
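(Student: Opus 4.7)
The plan is to exploit the rational-function representation of $\Zphi{\agam}$ provided by Proposition~\ref{Prop: Special Forms}, viewing $\Zphi{\agam_t}$ as a function of $t$ with an explicit extension to $t = 0$. For $t > 0$, taking $G$ as the model of $\agam_t$ gives
\benn
  \Zphi{\agam_t} = \frac{g-1}{6g}\ell(\Gamma_t) - \frac{\omega_1^G(t, \ell_2, \ldots, \ell_m)}{\eta^G(t, \ell_2, \ldots, \ell_m)^2},
\eenn
while $G/e_1$ is a model of $\agam_0$, so
\benn
  \Zphi{\agam_0} = \frac{g-1}{6g}\ell(\Gamma_0) - \frac{\omega_1^{G/e_1}(\ell_2, \ldots, \ell_m)}{\eta^{G/e_1}(\ell_2, \ldots, \ell_m)^2}.
\eenn
The total-length term is obviously continuous in $t$, so the proof reduces to showing that the rational function on the first line, evaluated at $t = 0$, coincides with the expression on the second line.

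First I would handle the denominator. By Lemma~\ref{Lem: Modified Edge Resistance}, $\eta^G(\ell_1,\ldots,\ell_m)|_{\ell_1=0} = R_1(\ell_1,\ldots,\ell_m)/\ell_1|_{\ell_1=0} = \eta^{G/e_1}(\ell_2,\ldots,\ell_m)$, and this value is strictly positive as a sum over spanning trees of $G/e_1$ weighted by positive edge-length products. Consequently $\Zphi{\agam_t}$ extends to a continuous function of $t$ on a neighborhood of $0$, and its value at $0$ is what we must identify with $\Zphi{\agam_0}$.

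The main task is to verify $\omega_1^G(0, \ell_2, \ldots, \ell_m) = \omega_1^{G/e_1}(\ell_2, \ldots, \ell_m)$, which I would carry out by expanding both sides using the explicit formula in Remark~\ref{Rem: Explicit Formula} and matching them under the identification $p_1, p_2 \mapsto p_{12}$ induced by the contraction. Three observations drive this comparison: (i) contracting the non-loop edge $e_1$ removes one incidence at each of $p_1$ and $p_2$, so the canonical coefficients satisfy $K(p_{12}) = K(p_1) + K(p_2)$; (ii) the resistance polynomials restrict correctly, namely $R_{ij}^G|_{\ell_1=0} = R_{ij}^{G/e_1}$ for $p_i, p_j \notin \{p_1, p_2\}$, while $R_{1j}^G|_{\ell_1=0} = R_{2j}^G|_{\ell_1=0} = R_{(12),j}^{G/e_1}$ and $R_{12}^G|_{\ell_1=0} = 0$; (iii) the $k=1$ contributions in Remark~\ref{Rem: Explicit Formula} vanish at $\ell_1=0$, since Lemma~\ref{Lem: Modified Edge Resistance} says $\eta^G - R_1^G/\ell_1$ is a polynomial vanishing at $\ell_1=0$, making both $(\eta^G - R_1^G/\ell_1)^2\ell_1$ and $(\eta^G - R_1^G/\ell_1)(R(p_i, e_1^-) + R(p_i, e_1^+))$ disappear.

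The main obstacle is the bookkeeping in step (iii): every sum in the formula for $\omega_1^G$ must be split according to whether its indices involve $\{p_1, p_2\}$, and the pieces summed using $K(p_1) + K(p_2) = K(p_{12})$ together with the resistance-identification in observation (ii). Since $q \equiv 0$ throughout, no further complications from weight functions arise, and once the term-by-term matching is complete the equality of polynomials follows, giving continuity of $\Zphi{\agam_t}$ at $t=0$ as claimed.
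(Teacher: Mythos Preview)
Your proposal is correct and follows essentially the same route as the paper: invoke Proposition~\ref{Prop: Special Forms}, handle $\ell(\Gamma_t)$ and $\eta$ separately, and then match $\omega_1^G|_{\ell_1=0}$ with $\omega_1^{G/e_1}$ term by term via the explicit formula of Remark~\ref{Rem: Explicit Formula}, using the canonical-divisor additivity $K(p_1)+K(p_2)=K(p_{12})$ and the vanishing of the $k=1$ summands. The one step you pass over lightly is observation~(ii): the paper isolates this as its own lemma and proves it combinatorially via the spanning-tree description~\eqref{Eqn: Poly Formula} of $R_{ij}$, so in a written-out version you would want to supply that argument (or an equivalent one from circuit theory) rather than assert it.
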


	We will spend the rest of the section proving this result. Our strategy will be to use Proposition~\ref{Prop: Special Forms} and prove each of the quantities that appears there behaves well as $t$ tends to zero. Each of these will be a separate lemma. 
	
\begin{lem}
		\benn
			\lim_{t \to 0} \ell(\Gamma_t) =  \ell(\Gamma_0).
		\eenn
\end{lem}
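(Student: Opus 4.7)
The plan is to observe that the total length of a metric graph is simply the sum of the lengths of the edges of any model, and that the only edge whose length varies in the family $\{\agam_t\}$ is $e_1$. So the statement reduces to noting that a linear function of $t$ is continuous at $t=0$.

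More precisely, fix the model $G$ of $\agam_1$ with edge set $\{e_1, e_2, \ldots, e_m\}$ and original lengths $\ell_1, \ell_2, \ldots, \ell_m$. By construction, for every $t > 0$ the metric graph $\Gamma_t$ has model $G$ with edge lengths $(t, \ell_2, \ldots, \ell_m)$. Therefore
\[
\ell(\Gamma_t) = t + \sum_{k=2}^{m} \ell_k.
\]
On the other hand, $\Gamma_0$ is obtained from $\Gamma_1$ by contracting the segment $e_1$, so the graph $G/e_1$ is a model of $\Gamma_0$ with edge lengths $(\ell_2, \ldots, \ell_m)$. Consequently
\[
\ell(\Gamma_0) = \sum_{k=2}^{m} \ell_k.
\]
Taking $t \to 0$ in the first expression yields $\ell(\Gamma_0)$, which proves the claim.

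There is no real obstacle here; the one tiny point to verify is that $\ell(\Gamma_0)$ computed from the contracted model $G/e_1$ agrees with the intrinsic total length of the metric graph $\Gamma_0$, which follows from the observation at the end of Section~\ref{Sec: Main Definitions} that the total length is independent of the choice of model. The lemma is really a warm-up for the more substantive continuity statements needed for $\eta$, the effective resistance, and eventually $\Zphi{\agam}$.
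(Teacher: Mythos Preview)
Your proof is correct and takes essentially the same approach as the paper, which simply records that $\ell(\Gamma_t) = t + \ell(\Gamma_0)$ by the definitions. You have just spelled out the intermediate step $\ell(\Gamma_0) = \sum_{k \geq 2} \ell_k$ and noted model-independence of total length, but the argument is identical in substance.
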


\begin{proof}
	The definitions immediately give $\ell(\Gamma_t) = t + \ell(\Gamma_0)$.
\end{proof}

	Suppose the vertex and edge sets of $G$ are given by $\{p_1, \ldots, p_n\}$ and $\{e_1, \ldots, e_m\}$, respectively. Let $\ell_1, \ell_2, \ldots, \ell_m$ be the segment lengths of $\Gamma_t$, where $\ell_1 = t$. We also have $E(G/e_1) = E(G) \smallsetminus \{e_1\}$ and $V(G/e_1) = \{p_{12}, p_3, \ldots, p_n\}$. Define
	\benn
		\eta(\Gamma_t) = \begin{cases} \sum_{T \subset G} \prod_{e_k \not\subset T} \ell_k & \text{if $t>0$,} \\ 
			\sum_{T \subset G/e_1} \prod_{e_k \not\subset T} \ell_k & \text{if $t=0$,} \end{cases}
	\eenn
where the summations are over spanning trees $T$. Note that $\eta(\Gamma_t) \in \ZZ[t, \ell_2, \ldots, \ell_m]$. 

\begin{lem}
	\benn
		\lim_{t \to 0} \eta(\Gamma_t) = \eta(\Gamma_0)
	\eenn
\end{lem}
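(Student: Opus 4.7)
The plan is to observe that $\eta(\Gamma_t)$ is a polynomial in $t$ (with coefficients in $\ZZ[\ell_2,\ldots,\ell_m]$), so the limit as $t\to 0$ is simply evaluation at $t=0$, and then to identify this evaluation with $\eta(\Gamma_0)$ via a standard bijection on spanning trees.

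First I would partition the spanning trees of $G$ into two classes: those containing the distinguished edge $e_1$, and those not containing it. If $T \subset G$ is a spanning tree with $e_1 \in T$, then $e_1$ is not in the complement of $T$, so the factor $\ell_1 = t$ is absent from $\prod_{e_k \not\subset T} \ell_k$. If instead $e_1 \notin T$, then $\ell_1 = t$ does appear as a factor in the product. Consequently, setting $t = 0$ in $\eta(\Gamma_t)$ annihilates every term arising from a spanning tree of $G$ not containing $e_1$, and leaves the terms from spanning trees containing $e_1$ unchanged.

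Next I would invoke the standard bijection between spanning trees of $G$ containing $e_1$ and spanning trees of $G/e_1$: contracting $e_1$ in a spanning tree of $G$ that contains $e_1$ produces a spanning tree of $G/e_1$, and conversely any spanning tree of $G/e_1$ lifts to a spanning tree of $G$ by reinserting $e_1$. Under the identification $E(G/e_1) = E(G) \smallsetminus \{e_1\}$, the complementary edge set is preserved by this bijection, and so the products $\prod_{e_k \not\subset T}\ell_k$ match term for term. Combining these two steps,
\benn
    \lim_{t \to 0} \eta(\Gamma_t) \;=\; \eta(\Gamma_t)\big|_{t=0} \;=\; \sum_{\substack{T \subset G \\ e_1 \in T}} \prod_{e_k \not\subset T} \ell_k \;=\; \sum_{T' \subset G/e_1} \prod_{e_k \not\subset T'} \ell_k \;=\; \eta(\Gamma_0).
\eenn
There is no real obstacle here: once $\eta$ is recognized as a polynomial in $t$ (immediate from \eqref{Eq: Def Eta}) and the tree-contraction bijection is applied, the result drops out. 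The only mild care needed is to ensure that $e_1$ is not a loop edge, which is already part of the setup; this guarantees that contracting $e_1$ does not change the first Betti number and that the bijection between spanning trees of $G$ containing $e_1$ and spanning trees of $G/e_1$ is well-defined.
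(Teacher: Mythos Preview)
Your proof is correct and follows essentially the same approach as the paper: split the spanning trees of $G$ according to whether they contain $e_1$, observe that the terms from trees not containing $e_1$ carry a factor of $t$ and hence vanish in the limit, and then use the bijection between spanning trees of $G$ containing $e_1$ and spanning trees of $G/e_1$ to identify the surviving sum with $\eta(\Gamma_0)$.
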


\begin{proof}
	Since the spanning trees of $G/e_1$ are in bijection with the spanning trees of $G$ containing the edge $e_1$, we see that as $t \to 0$, 
	\benn
		\ba		
			\eta(\Gamma_t) &= \sum_{e_1 \subset T \subset G} 
				\ \prod_{e_k \not\subset T} \ell_k + t \sum_{\substack{T \subset G \\ e_1 \not\subset T}} 
				\ \prod_{\substack{e_k \not\subset T \\ k \not= 1}} \ell_k \\
			&= \sum_{T \subset G/e_1} \prod_{e_k \not\subset T} \ell_k + o(1) \\
			&= \eta(\Gamma_0) + o(1).
		\ea
	\eenn
\end{proof}

	Let $r_t(x,y)$ denote the effective resistance between points $x,y \in \Gamma_t$. Define 
		\[
			R_t(x,y) = \eta(\Gamma_t)\cdot r_t(x,y). 
		\]
We have already seen in Lemma~\ref{Lem: Modified Resistance} that $R_t(p, p')$ is a polynomial in the lengths $t, \ell_2, \ldots, \ell_m$ for any vertices $p, p' \in G$ if $t > 0$ (respectively $p, p' \in G/e_1$ if $t=0$). 
	
\begin{lem} \label{Lem: Edge Contraction}
	\benn
		\lim_{t \to 0} R_t(p_i, p_j) = \begin{cases} 0 & \text{if $\{i, j\} \subset \{1, 2\}$} \\
			R_0(p_{12}, p_j) & \text{if $i \in \{1, 2\}$ and $j \geq 3$} \\
			R_0(p_i, p_{12}) & \text{if $j \in \{1, 2\}$ and $i \geq 3$} \\
			R_0(p_i, p_j) & \text{if $i, j \geq 3$}.
		\end{cases}
	\eenn
\end{lem}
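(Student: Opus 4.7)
The plan is to use the polynomial description of $R_t(p_i,p_j)$ from the proof of Lemma~\ref{Lem: Modified Resistance}, together with the observation that for a weighted graph $H$ in which $e_1$ has length $t$, one has
$\eta_H(0,\ell_2,\ldots,\ell_m)=\eta_{H/e_1}(\ell_2,\ldots,\ell_m)$. This identity holds because a spanning tree $T$ of $H$ either contains $e_1$ (in which case $\ell_1=t$ does not appear in $\prod_{e_k\not\subset T}\ell_k$ and $T\mapsto T/e_1$ gives a bijection with spanning trees of $H/e_1$) or it does not (in which case the term is divisible by $t$ and vanishes at $t=0$). So, once I exhibit each $R_t(p_i,p_j)$ as an $\eta$ of an appropriate auxiliary graph, the $t\to 0$ limit is obtained by edge contraction.

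Recall from the proof of Lemma~\ref{Lem: Modified Resistance} that if $G^{(i,j)}$ denotes the weighted graph obtained from $G$ by fusing the vertices $p_i$ and $p_j$ (preserving all edges, so that former edges between them become loops), then $R_t(p_i,p_j)=\eta_{G^{(i,j)}_t}(t,\ell_2,\ldots,\ell_m)$, where the subscript $t$ reminds us that $\ell_1=t$. I will handle the four cases in turn.

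\textbf{Case $\{i,j\}=\{1,2\}$.} In $G^{(1,2)}$ the edge $e_1$ becomes a loop, hence lies in no spanning tree. Thus every monomial in $\eta_{G^{(1,2)}_t}$ is divisible by $t$, which gives $\lim_{t\to 0}R_t(p_1,p_2)=0$.

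\textbf{Cases $i\in\{1,2\}$, $j\geq 3$ (and the symmetric case).} Say $i=1$. Applying the identity $\eta_H(0,\ldots)=\eta_{H/e_1}(\ldots)$ with $H=G^{(1,j)}$, it suffices to observe that $G^{(1,j)}/e_1$ and $(G/e_1)^{(12,j)}$ are the same weighted graph, since both are obtained from $G$ by identifying the three vertices $p_1,p_2,p_j$ and retaining all edges except $e_1$. Hence the limit equals $\eta_{(G/e_1)^{(12,j)}}=R_0(p_{12},p_j)$.

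\textbf{Case $i,j\geq 3$.} The operations of fusing $\{p_i,p_j\}$ and of contracting $e_1=\{p_1,p_2\}$ involve disjoint vertex sets and hence commute: $G^{(i,j)}/e_1=(G/e_1)^{(i,j)}$. Applying the contraction identity again yields $\lim_{t\to 0}R_t(p_i,p_j)=\eta_{(G/e_1)^{(i,j)}}=R_0(p_i,p_j)$.

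No step is genuinely difficult; the only thing to be careful about is the bookkeeping that fusing non-contracted vertices commutes with contracting $e_1$, and that when $\{i,j\}=\{1,2\}$ the contracted edge becomes a loop and so forces the polynomial to vanish at $t=0$. The hypothesis that $e_1$ is not a loop edge is used implicitly to ensure $p_1\neq p_2$, so that all four cases are genuinely distinct and the fused graph $G^{(1,2)}$ differs from $G$.
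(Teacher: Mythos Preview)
Your proof is correct and follows essentially the same approach as the paper: both arguments invoke the formula $R_t(p_i,p_j)=\eta_{G'}(t,\ell_2,\ldots,\ell_m)$ from Lemma~\ref{Lem: Modified Resistance} (with $G'$ the graph obtained by fusing $p_i$ and $p_j$), observe that setting $t=0$ corresponds to contracting $e_1$ via the spanning-tree bijection, and then check that fusing vertices commutes with contracting $e_1$ except in the case $\{i,j\}=\{1,2\}$, where $e_1$ becomes a loop and the limit vanishes. The only cosmetic difference is that you split the last three cases explicitly while the paper treats them uniformly via the notation $\bar{p}_i,\bar{p}_j$; you might also mention the trivial diagonal case $i=j$ (covered by $\{i,j\}\subset\{1,2\}$ when $i=j\in\{1,2\}$, and by the fourth case when $i=j\geq 3$), though $R_t(p_i,p_i)=0$ is immediate.
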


\begin{proof}
	This result is intuitively obvious from the vantage point of circuit theory since the effective resistance behaves continuously under edge contractions. We will give a rigorous proof using the ideas from Lemma~\ref{Lem: Modified Resistance}.
	
	If $p_i = p_j$, then $R_t(p_i, p_j) = 0$ for all $t > 0$. If $i,j \geq 3$, then $R_0(p_i, p_j) = 0$ as well. Since we are in either the first or fourth cases of the lemma, we have proved the statement.

	For the remainder of the proof we assume $p_i \not= p_j$. By \eqref{Eqn: Poly Formula} in the proof of Lemma~\ref{Lem: Modified Resistance}, we find
	\[
		R_t(p_i, p_j) = \eta_{G'}(t, \ell_2, \ldots, \ell_m),	
	\]
where $G'$ is the graph given by fusing the vertices $p_i$ and $p_j$. Thus
	\be \label{Eqn: Limit Formula}
		\ba
			\lim_{t \to 0} R_t(p_i, p_j) &= \eta_{G'}(0, \ell_2,\ldots, \ell_m) \\
				&= \sum_{T \subset G'} \ \prod_{e_k \not\subset T} \ell_k\  \Big|_{\ell_1 = 0} \\
				&= \sum_{e_1 \subset T \subset G'} \ \prod_{e_k \not\subset T} \ell_k.
		\ea
	\ee
The first summation is over all spanning trees of $G'$ while the second is only over the spanning trees containing $e_1$. 

	Let us now assume that $\{i,j\} = \{1,2\}$. Then the edge $e_1$ becomes a loop edge in $G'$, and so no spanning tree contains $e_1$. The last expression in \eqref{Eqn: Limit Formula} must be zero, which is the desired result.
	
	Finally, we assume we are in one of the remaining three cases of the lemma. Then $e_1$ is not a loop edge in $G'$. Let $G' / e_1$ be the graph derived from $G'$ by collapsing the edge $e_1$. It is naturally identified with the graph $(G / e_1)'$ given by first collapsing $e_1$ and then fusing the images of $p_i$ and $p_j$.  The spanning trees of $G'$ containing $e_1$ are in bijection with the spanning trees of $G' / e_1 = (G / e_1)'$. From \eqref{Eqn: Limit Formula} we see that
	\benn
		\ba
		\lim_{t \to 0} R_t(p_i, p_j) &= \sum_{e_1 \subset T \subset G'} \ \prod_{e_k \not\subset T} \ell_k \\
			&= \sum_{T \subset (G / e_1)'} \ \prod_{e_k \not\subset T} \ell_k \\
			&= \eta_{(G / e_1)'} \left(\ell_2, \ldots, \ell_m\right) 
				\hspace{0.7cm} \text{(definition of $\eta_{(G/e_1)'}$)}\\
			&= R_0(\bar{p}_i, \bar{p}_j),
				 \hspace{2.25cm} \text{(by \eqref{Eqn: Poly Formula} again),}
		\ea
	\eenn
where $\bar{p}_i$ and $\bar{p}_j$ are the images of $p_i$ and $p_j$, respectively, in the limit graph $\Gamma_0$. This completes the proof of the lemma.
\end{proof}

	Let $\omega_1(\agam_t) \in \QQ[t, \ell_2, \ldots, \ell_m]$ be the polynomial defined in Proposition~\ref{Prop: Special Forms} with respect to the model $G$ when $t > 0$ (respectively $G/e_1$ when $t=0$).

\begin{lem} 
	In the setting above,
	\benn
		\lim_{t \to 0} \omega_1(\agam_t) = \omega_1(\agam_0).
	\eenn
\end{lem}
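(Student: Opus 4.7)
The plan is to invoke the explicit formula for $\omega_1$ from Remark~\ref{Rem: Explicit Formula} and verify that each of its constituent terms is a continuous function of $t$ as $t \to 0$, using the three preceding lemmas. Writing $\eta_t = \eta(\Gamma_t)$, $R_t = R_t(\cdot,\cdot)$, and keeping in mind that $q \equiv 0$ so $K(p_i) = v(p_i) - 2$, the formula decomposes $\omega_1(\agam_t)$ into four summands: $\ell(\Gamma_t)\eta_t^2$, an edge sum $\sum_k (\eta_t - R_t(e_k^-,e_k^+)/\ell_k)^2 \ell_k$, a vertex double-sum involving $K(p_i)R_t(p_i,p_j)K(p_j)$, and a mixed sum pairing vertices against edges. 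Continuity of the first summand is immediate from the first two lemmas of this section.

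The main technical point is how the contracting edge $e_1$ behaves in the edge sum. For $k=1$ we have $R_t(p_1,p_2)/t = \eta_t|_{t=0} = \eta(\Gamma_0)$ as a polynomial identity by Lemma~\ref{Lem: Modified Edge Resistance}; thus this ratio is \emph{independent} of $t$ and equals $\eta(\Gamma_0)$. Since $\eta_t \to \eta(\Gamma_0)$, the factor $\eta_t - R_t(p_1,p_2)/t$ tends to $0$, and multiplication by $\ell_1 = t \to 0$ annihilates the entire $k=1$ contribution. For $k \geq 2$ the length $\ell_k$ is fixed, so continuity follows directly from continuity of $\eta_t$ and from Lemma~\ref{Lem: Edge Contraction}, which sends $R_t(e_k^-, e_k^+)$ to $R_0(\bar e_k^-, \bar e_k^+)$, where $\bar{e_k}$ denotes the image of $e_k$ in $G/e_1$.

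For the vertex double-sum and mixed sum, the key bookkeeping step is that when $e_1 = \{p_1, p_2\}$ is contracted, the two valences combine as $v_G(p_1) + v_G(p_2) - 2 = v_{G/e_1}(p_{12})$, so $K(p_1) + K(p_2) = K(p_{12})$ in the limit model. Applying Lemma~\ref{Lem: Edge Contraction} case by case: terms with $\{i,j\} \subset \{1,2\}$ die because $R_t(p_i, p_j) \to 0$; for $i \in \{1,2\}$ and $j \geq 3$ one has $R_t(p_i, p_j) \to R_0(p_{12}, p_j)$, and summing over $i = 1, 2$ produces exactly $K(p_{12}) R_0(p_{12}, p_j) K(p_j)$; the remaining terms with $i, j \geq 3$ are manifestly continuous. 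A parallel analysis applies to the mixed sum: the $k=1$ contributions vanish because $\eta_t - R_t(p_1,p_2)/t \to 0$, while for $k \geq 2$ the edge-endpoint resistances $R_t(p_i, e_k^\pm)$ converge under Lemma~\ref{Lem: Edge Contraction}, and the vertex sums collapse as above. Assembling these limits reproduces exactly the formula for $\omega_1(\agam_0)$ built on the model $G/e_1$.

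The main obstacle is not any deep technicality but rather the meticulous bookkeeping: one must track how each piece of the formula built on $G$ (with vertex set $\{p_1, \dots, p_n\}$ and edge set $\{e_1, \dots, e_m\}$) reduces, as $t \to 0$, to the corresponding piece built on $G/e_1$ (with vertex set $\{p_{12}, p_3, \dots, p_n\}$ and edge set $\{e_2, \dots, e_m\}$), treating loop edges and adjacencies at $p_1$ or $p_2$ as special sub-cases of Lemma~\ref{Lem: Edge Contraction}.
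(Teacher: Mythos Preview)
Your proposal is correct and follows essentially the same approach as the paper's proof: both invoke the explicit formula of Remark~\ref{Rem: Explicit Formula}, isolate the $k=1$ summands via the identity $R_t(e_1^-,e_1^+)/t = \eta(\Gamma_0)$ from Lemma~\ref{Lem: Modified Edge Resistance}, and then handle the remaining terms by the canonical-divisor bookkeeping $K(p_1)+K(p_2)=K(p_{12})$ together with the case analysis of Lemma~\ref{Lem: Edge Contraction}. The paper's write-up differs only cosmetically, displaying the four expressions explicitly before treating them one by one.
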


\begin{proof}
	Remark~\ref{Rem: Explicit Formula} gives a formula for $\omega_1$. Let $K_t$ be the canonical divisor for the graph $\agam_t$.  As Lemma~\ref{Lem: Modified Edge Resistance} and its proof imply $\eta(\Gamma_0) = R_t(e_1^-, e_1^+) / t$, we have
{\small 
	\be \label{Eqn: Four expressions}
		\ba
			\omega_1(\agam_t) &= \frac{5g-2}{12g}\left[ \ell(\Gamma_t) \ \eta(\Gamma_t)^2
						 - t \Big( \eta(\Gamma_t) -  \eta(\Gamma_0) \Big)^2 
						- \sum_{k \geq 2} \left(\eta(\Gamma_t) 
						- \frac{R_t(e_k^-, e_k^+)}{\ell_k} \right)^2 \ell_k \right]\\
					 & \qquad + \frac{3\eta(\Gamma_t)}{8(g-1)}\sum_{i,j} K_t(p_i) K_t(p_j) R_t(p_i, p_j) \\
					 & \qquad - \frac{5g-2}{8g(g-1)} 
					 	\Big(\eta(\Gamma_t) - \eta(\Gamma_0) \Big) \sum_i K_t(p_i)
						\Big(R_t(p_i, e_1^-) + R_t(p_i, e_1^+) \Big)\\
					 & \qquad - \frac{5g-2}{8g(g-1)} \sum_i K_t(p_i) \sum_{k \geq 2} 
						\left(\eta(\Gamma_t) - \frac{R_t(e_k^-, e_k^+)}{\ell_k}\right)
						\Big(R_t(p_i, e_k^-) + R_t(p_i, e_k^+) \Big).
		\ea
	\ee
}
We will treat each of these four expressions separately using the previous several lemmas. 

	For the first expression of \eqref{Eqn: Four expressions}, we have
	\benn 	\label{Eqn: First expression}
		\ba
			\frac{5g-2}{12g}&\left[ \ell(\Gamma_t) \ \eta(\Gamma_t)^2
						 - t \Big( \eta(\Gamma_t) -  \eta(\Gamma_0) \Big)^2 
						- \sum_{k \geq 2} \left(\eta(\Gamma_t) 
						- \frac{R_t(e_k^-, e_k^+)}{\ell_k} \right)^2 \ell_k \right]\\ & = 
			\frac{5g-2}{12g}\left[ \ell(\Gamma_0) \eta(\Gamma_0)^2 - \sum_{k \geq 2} \left(\eta(\Gamma_0) - 
				\frac{R_0(e_k^-, e_k^+)}{\ell_k}\right)^2\ell_k \right]+ o(1),
		\ea
	\eenn
as $t \to 0$. Thus we obtain the correct first expression for $\omega_1(\Gamma_0)$. Moving to the second expression, we note that $K_t(p_i) = K_0(p_i)$ for $i \geq 3$ and that 
	\[
		K_t(p_1) + K_t(p_2) = v(p_1) +v(p_2) - 4 = v(p_{12}) - 2 = K_0(p_{12}).
	\]
Ignoring the leading factor on the second expression and using Lemma~\ref{Lem: Edge Contraction}, we see
{\small
	\benn \label{Eqn: Second expression}
		\ba
			\sum_{i,j} &K_t(p_i) K_t(p_j)  R_t(p_i, p_j)  = 
					2K_t(p_1)K_t(p_2)R_t(p_1, p_2) + \sum_{i,j \geq 3} K_t(p_i)K_t(p_j) R_t(p_i, p_j) \\
				& \qquad + 2\sum_{i \geq 3} K_t(p_1)K_t(p_i)R_t(p_1, p_i) 
					+ 2\sum_{i \geq 3} K_t(p_2)K_t(p_i)R_t(p_2, p_i) \\
			 	&= \sum_{i,j \geq 3} K_0(p_i)K_0(p_j) R_0(p_i, p_j)  
					+ 2\sum_{i \geq 3}\Big[K_t(p_1) + K_t(p_2)\Big] K_0(p_i)R_0(p_{12}, p_i) 
					+o(1) \\
				&= \sum_{i,j \geq 3} K_0(p_i)K_0(p_j) R_0(p_i, p_j)  
					+ 2\sum_{i \geq 3} K_0(p_{12})K_0(p_i)R_0(p_{12}, p_i) + o(1).
		\ea
	\eenn
}
Thus we find the second expression tends to the correct limit as $t \to 0$. 

	The third expression in \eqref{Eqn: Four expressions} vanishes as $t$ tends to zero because of the factor $\eta(\Gamma_t) - \eta(\Gamma_0)$. The fourth expression tends to the correct limit using the same idea as in the second expression (i.e., group the terms corresponding to $i=1, 2$ separately).
\end{proof}

\begin{proof}[Proof of Proposition~\ref{Prop: Continuity}]

	By Proposition~\ref{Prop: Special Forms} and the previous lemmas, we have
	\benn
		\ba
			\Zphi{\agam_t} &= \frac{g-1}{6g}\ell(\Gamma_t) - \frac{\omega_1(\agam_t)}{\eta(\Gamma_t)^2}
				= \frac{g-1}{6g}\ell(\Gamma_0) - \frac{\omega_1(\agam_0)}{\eta(\Gamma_0)^2} + o(1) \\
				&= \Zphi{\agam_0} + o(1)
		\ea
	\eenn
as $t \to 0$. Here we have used the fact that $\eta(\Gamma_0) \not= 0$, which is evident by its definition and the fact that none of the other edge lengths is zero.
\end{proof}

	
\subsection{Reduction to Irreducible Cubic PM-Graphs}

	In this section we show the following proposition:
	
\begin{prop} \label{Reduce conjecture}
	Fix $g \geq 2$. Suppose there is a positive constant $c(g)$ such that for all irreducible cubic pm-graphs 
	$\agam = (\Gamma, 0)$ of genus $g$,
		\benn
			\Zphi{\agam} \geq c(g)\ell(\Gamma).
		\eenn 
	Then for an arbitrary pm-graph $\agam$, we have
		\benn
				\Zphi{\agam} \geq c(g) \ell_0\left(\agam\right)
					+\sum_{i \in (0, g/2]} \frac{2i(g-i)}{g}\ell_i\left(\agam\right).
		\eenn
	In particular, it suffices to verify Conjecture~\ref{Conj: Zhang} only for cubic irreducible 
	pm-graphs $\agam = (\Gamma, 0)$. 
\end{prop}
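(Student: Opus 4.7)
The plan is to execute the three reductions sketched in the paragraph preceding the proposition, in order.

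\textbf{Step 1: Additivity along type-$i$ cuts for $i>0$.} First I would establish an exact additivity formula for $\Zphi{\agam}$. If $\agam$ decomposes as a wedge $\agam = \agam_1 \cup \agam_2$ with $\agam_1 \cap \agam_2 = \{p\}$ a single point of type $i > 0$ (so $g = i + (g-i)$), I expect
\[
\Zphi{\agam} = \Zphi{\agam_1} + \Zphi{\agam_2}.
\]
Moreover, for a single type-$i$ edge---a segment of length $\ell$ whose interior is type $i$---joining two pm-subgraphs $\agam_1$, $\agam_2$ of genera $i$ and $g-i$, the full invariant should equal
\[
\Zphi{\agam} = \frac{2i(g-i)}{g}\ell + \Zphi{\agam_1} + \Zphi{\agam_2}.
\]
Both identities can be derived from Corollary~\ref{Cor: No tau} by computing $\mu$, the effective resistance $r(x,y)$, and $r(K,K)$ in a wedge/bridge decomposition. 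The key facts are that effective resistance is additive across a cut point (series rule) and that the admissible measure $\mu$ on a wedge restricts to rescaled canonical measures of the two components, with weights proportional to their genera. Iterating along every type-$i$ cut ($i>0$) reduces the global bound to an irreducible bound of the form $\Zphi{\agam'} \geq c(g')\ell_0(\agam')$ on each resulting irreducible piece $\agam'$.

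\textbf{Step 2: Eliminating vertex weights.} For an irreducible pm-graph $(\Gamma, q)$ of genus $g$ with $q \not\equiv 0$, I would construct a family $\agam_t$, $t > 0$, with vertex weight $\equiv 0$ by replacing each vertex $p$ of weight $k := q(p) \geq 1$ with $k$ loops of length $t$ attached at $p$ (and setting the new weight to $0$). This preserves the genus and adds only type-$0$ length, since no interior point of a loop disconnects $\Gamma$. After subdividing each loop into two non-loop edges so that Proposition~\ref{Prop: Continuity} applies, taking $t \to 0$ yields $\Zphi{\agam_t} \to \Zphi{\agam}$ and $\ell_0(\agam_t) \to \ell_0(\agam)$. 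Applying the hypothesis to each $\agam_t$ and passing to the limit gives $\Zphi{\agam} \geq c(g)\ell_0(\agam)$.

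\textbf{Step 3: Reducing to cubic graphs.} Finally, any irreducible pm-graph $(\Gamma, 0)$ of genus $g$ can be realized as a contraction of irreducible cubic pm-graphs: resolve every vertex of valence $v \geq 4$ into a tree of valence-$3$ vertices joined by auxiliary non-loop edges of length $t$. Iteratively applying Proposition~\ref{Prop: Continuity} and letting $t \to 0$ shows that both $\Zphi{\cdot}$ and total length converge, so the hypothesized cubic bound passes to the general irreducible $q \equiv 0$ case, completing the reduction.

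\textbf{The main obstacle} is Step 1: pinning down the additivity with the precise coefficient $\frac{2i(g-i)}{g}$. The pieces $\Zepsilon{\agam}$ and $r(K,K)$ split cleanly across a cut point using additivity of effective resistance through series, but the relative weighting of the admissible measure on the two components involves genus-dependent factors, and it is the careful bookkeeping in the formula of Corollary~\ref{Cor: No tau}---combined with the identity $g = i + (g-i)$ when computing $\ell_i$ contributions along a bridging edge---that should produce precisely the coefficient $\frac{2i(g-i)}{g}$. Once Step 1 is in place, Steps 2 and 3 are routine applications of the continuity results developed in Section~\ref{Sec: Continuity}.
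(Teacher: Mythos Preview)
Your three-step outline matches the paper's (Lemmas~\ref{Lem: All type 0}, \ref{Lem: q equals 0}, \ref{Lem: Reduction Lemma}), and Steps~1 and~3 are essentially right. One caution in Step~1: the irreducible pieces $\agam'$ must each be given an induced pm-structure of genus~$g$, not some smaller $g'$. The retraction $\pi_i:\Gamma\to\Gamma_i$ pushes the Betti number and weights of the complement into $q_i$ at the attaching point (this is Zhang's Lemma~\ref{Lem: Additivity}), and only with that convention does $\Zphi{\agam}=\sum_i\Zphi{\agam_i}$ hold with the same~$g$ in every summand. If you let the pieces keep their naive genera $i$ and $g-i$, the additivity fails and you would need the hypothesis for all genera at once.

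Step~2, however, has a real gap. Proposition~\ref{Prop: Continuity} contracts a \emph{non-loop} edge of a $q\equiv 0$ graph and returns another $q\equiv 0$ graph of the same genus. Subdividing a loop into two edges $e_1,e_2$ does not evade this: contracting $e_1$ turns $e_2$ into a loop, which the proposition forbids; and even were both contractions permitted, the limit would be a $q\equiv 0$ graph of genus $g-1$, not the original $\agam$ with its positive weight at~$p$ restored. Edge contraction can never reinstate a nonzero value of~$q$. The paper's fix is to reuse the additivity from Step~1 rather than continuity: attach one circle $C_t$ of length~$t$ at a point $p$ with $q(p)>0$, lower $q(p)$ by~$1$, and note that Lemma~\ref{Lem: Additivity} gives $\Zphi{\agam_t}=\Zphi{\agam}+\Zphi{\alg{C}_t}$ with $\Zphi{\alg{C}_t}=\frac{g-1}{6g}\,t$ computed directly (Lemma~\ref{Lem: Constants for Circles}). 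Applying the inductive hypothesis to $\agam_t$ and sending $t\to 0$ then yields $\Zphi{\agam}\geq c(g)\ell(\Gamma)$ with no appeal to Proposition~\ref{Prop: Continuity}.
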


Our first step is to summarize some useful observations of Zhang \cite[\S4.3]{Zhang_Gross-Schoen_Cycles_2008}. Let $\agam = (\Gamma, q)$ be an arbitrary pm-graph. Consider a finite collection of (closed, connected) metric subgraphs $\Gamma_1, \ldots, \Gamma_s$ such that $\Gamma$ is the successive pointed sum of the $\Gamma_i$. For example, if $\Gamma$ can be disconnected by removing a point $p$, then we may write $\Gamma = \Gamma_1 \cup \Gamma_2$ for some subgraphs $\Gamma_i$ such that $\Gamma_1 \cap \Gamma_2 = \{p\}$. In general, we may assign a pm-graph structure to each of the subgraphs $\Gamma_i$ as follows. Let $\pi_i: \Gamma \to \Gamma_i$ be the retraction map sending each point in $\Gamma$ to the closest point in $\Gamma_i$. Define a new function $q_i: \Gamma_i \to \ZZ$ by 
	\benn
		q_i(y) 
			= \dim_{\RR} H^1(\pi_i^{-1}(y), \RR) + \sum_{x \in \pi_i^{-1}(y)}q(x).
	\eenn
With these definitions, we have

\begin{lem}[{\cite[Thm.~4.3.2]{Zhang_Gross-Schoen_Cycles_2008}}] \label{Lem: Additivity}
	Each pair $\agam_i = (\Gamma_i, q_i)$ is a pm-graph and $g(\agam) = g(\agam_i)$. Moreover,
		\benn
			\Zphi{\agam} = \sum_i \Zphi{\agam_i}.
		\eenn
\end{lem}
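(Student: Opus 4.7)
The plan is to reduce by induction on $s$ to the case $s = 2$, so that $\Gamma = \Gamma_1 \cup \Gamma_2$ with $\Gamma_1 \cap \Gamma_2 = \{p\}$ a single cut point; the retraction $\pi_i \colon \Gamma \to \Gamma_i$ is then the identity on $\Gamma_i$ and collapses the other subgraph to $p$. The conceptual core of the argument I would give is to show that both the canonical divisor and the admissible measure are compatible with these retractions in the sense of pushforward:
\benn
K_i = (\pi_i)_* K, \qquad \mu_i = (\pi_i)_* \mu.
\eenn

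With these in hand, the pm-graph axioms for $\agam_i$ become easy to verify. Non-negativity and finite support of $q_i$ are immediate, and effectivity of $K_i$ reduces to an inequality at $p$ which a direct computation converts to
\benn
K_1(p) - K(p) = \sum_{x \in \Gamma_2 \setminus \{p\}} K(x) \geq 0.
\eenn
This uses only the effectivity of $K$ on $\agam$, and it simultaneously confirms the pushforward identity for $K_i$. The genus equality $g(\agam_i) = g(\agam)$ then follows by unpacking the definition of $q_i$ and using the additivity $b_1(\Gamma) = b_1(\Gamma_1) + b_1(\Gamma_2)$ for a pointed sum.

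For the main additivity assertion, note that since $g(\agam) = g(\agam_i)$, Corollary~\ref{Cor: No tau} writes $\Zphi$ for each pm-graph as a common linear combination of $\Zepsilon$, $r(K,K)$, and $\ell(\Gamma)$; hence it suffices to show each of these three invariants is additive under pointed sum. Length is trivial. For the other two, the key input is the cut-vertex identity
\benn
r_\Gamma(x, y) = r_{\Gamma_1}(\pi_1 x, \pi_1 y) + r_{\Gamma_2}(\pi_2 x, \pi_2 y), \qquad x, y \in \Gamma,
\eenn
which I would establish by case analysis: one summand always vanishes when $x$ and $y$ lie in the same subgraph, and when they lie in opposite subgraphs the sum recovers the series combination through $p$. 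Integrating this identity against $dK \otimes dK$ and $dK \otimes d\mu$ and applying the pushforward identities for $K_i$ and $\mu_i$ then yields $r_\Gamma(K,K) = r_{\Gamma_1}(K_1, K_1) + r_{\Gamma_2}(K_2, K_2)$ and $\Zepsilon{\agam} = \Zepsilon{\agam_1} + \Zepsilon{\agam_2}$, from which $\Zphi{\agam} = \Zphi{\agam_1} + \Zphi{\agam_2}$ follows.

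The main obstacle will be verifying $\mu_i = (\pi_i)_* \mu$. From the explicit formula for the admissible measure in \S\ref{Sec: Admissible Stuff}, the two measures manifestly agree on segments interior to $\Gamma_i$, since the resistance factors $F(e)$ there are unchanged by the attached subgraph. At $p$ the comparison reduces to the identity
\benn
q_1(p) - q(p) = \sum_{x \in \Gamma_2 \setminus \{p\}} q(x) + \sum_{e \in E(\Gamma_2)} F(e),
\eenn
and the sum of $F$-factors on the right equals $b_1(\Gamma_2)$, a normalization that one extracts by applying the total-mass-one property of the canonical measure $\mu_0$ on $\Gamma_2$ to its explicit formula. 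This matches the definition of $q_1(p)$, and the proof concludes.
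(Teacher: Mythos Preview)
The paper does not prove this lemma; it is quoted directly from Zhang with the citation \cite[Thm.~4.3.2]{Zhang_Gross-Schoen_Cycles_2008} and no argument is given. So there is no in-paper proof to compare against.

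Your sketch is correct and is essentially the natural direct argument. The two pushforward identities $K_i=(\pi_i)_*K$ and $\mu_i=(\pi_i)_*\mu$, combined with the cut-vertex resistance formula $r_\Gamma(x,y)=r_{\Gamma_1}(\pi_1x,\pi_1y)+r_{\Gamma_2}(\pi_2x,\pi_2y)$, are exactly what one needs to push the double integrals defining $\Zepsilon{\agam}$ and $r(K,K)$ down to the pieces; Corollary~\ref{Cor: No tau} then converts this into additivity of $\Zphi{\agam}$. Your verification of $\mu_i=(\pi_i)_*\mu$ at the cut point via $\sum_{e\subset\Gamma_2}F(e)=b_1(\Gamma_2)$ is the right computation, and it does follow from the total-mass-one normalization of $\mu_0$ applied to $\Gamma_2$.

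One point worth tightening in the induction on $s$: when you strip off $\Gamma_s$ and apply the $s-1$ case to $\Gamma''=\Gamma_1\cup\cdots\cup\Gamma_{s-1}$ with its induced weight $q''$, you implicitly use that the pm-structures $(\Gamma_i,q_i)$ built from $(\Gamma,q)$ agree with those built from $(\Gamma'',q'')$ for $i<s$. This reduces to the transitivity of retractions $\pi_i=\pi_i'\circ\pi''$ together with additivity of $b_1$ over pointed sums of fibers; it is routine, but you should say it.
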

	
\begin{lem}[{\cite[Lem.~4.3.1, Prop.~4.4.1]{Zhang_Gross-Schoen_Cycles_2008}}] \label{Lem: Decomp}
	Let $\agam = (\Gamma, q)$ be a pm-graph of genus $g$. Then $\Gamma$ can be written as a successive pointed 
	sum of finitely many subgraphs $\Gamma_i$ and $I_j$ such that each $\Gamma_i$ is a maximal irreducible 
	subgraph of $\Gamma$ and each $I_j$ is isometric to a closed interval for which $q$ is identically zero on its 
	interior. Moreover,
		\benn
			\Zphi{\agam} = \sum_i \Zphi{\agam_i} + \sum_{i \in (0, g/2]} \frac{2i(g-i)}{g}\ell_i\left(\agam\right).			\eenn
\end{lem}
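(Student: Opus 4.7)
The plan is to establish the two halves of the statement separately: first the structural decomposition of $\Gamma$ into irreducible pieces and intervals, and then the explicit formula for $\Zphi{\agam}$.

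For the structural part, I would use the classical $2$-edge-connected (block) decomposition from graph theory. The maximal irreducible subgraphs of $\Gamma$ are the maximal connected closed subsets that are not disconnected by the removal of any single point; these are uniquely determined, and the complement of their union in $\Gamma$ is a finite disjoint union of open bridge segments. After further subdividing each such bridge at any point of $\operatorname{supp}(q)$ lying on it (treating such a $q$-supported point as a one-point irreducible subgraph, which is legitimate since $q(p)\ge 1$ makes its canonical divisor effective), one obtains a pointed-sum decomposition $\Gamma = \Gamma_1 \cup \cdots \cup \Gamma_s \cup I_1 \cup \cdots \cup I_t$ with the advertised properties.

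Applying Lemma~\ref{Lem: Additivity} directly to this decomposition yields
$$\Zphi{\agam} = \sum_i \Zphi{\agam_i} + \sum_j \Zphi{\agam_j'},$$
where $\agam_j' = (I_j, q_j)$ is the induced pm-graph structure on $I_j$. From the definition of $q_j$ in Lemma~\ref{Lem: Additivity}, the two endpoints of $I_j$ carry weights $a_j$ and $g - a_j$ with $a_j\in[1,g-1]$, these being the weighted genera of the two connected components of $\Gamma\setminus I_j^\circ$ that retract to the respective endpoints. Since every interior point $p\in I_j^\circ$ has valence $2$ and disconnects $\Gamma$ into pieces of weighted genera $a_j$ and $g-a_j$, it is of type $\min(a_j, g-a_j)$ by definition. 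Consequently $\ell_i(\agam)$ for $i\in(0,g/2]$ is exactly the sum of the lengths of those intervals $I_j$ of type $i$, and the lemma reduces to the per-interval identity $\Zphi{\agam_j'} = \tfrac{2\,a_j(g-a_j)}{g}\,\ell(I_j)$.

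The hard part is this last identity, which I would prove by direct computation using the explicit formulas from Section~\ref{Sec: Rapid}. For an interval of length $L$ with endpoint weights $i$ and $g-i$, the graph has a single edge, $b_1=0$, and $r(e)=L$ so that $F(e)=0$; the admissible measure is $\mu = \tfrac{1}{g}\bigl(i\,\delta_{p_-} + (g-i)\,\delta_{p_+}\bigr)$, the canonical divisor is $K=(2i-1)[p_-] + (2(g-i)-1)[p_+]$, the effective resistance is $r(x,y)=|x-y|$, and $r(K,K) = 2(2i-1)(2(g-i)-1)\,L$. A direct integration gives
$$\Zepsilon{\agam_j'} \;=\; \int r(K,y)\,d\mu(y) \;=\; \frac{L}{g}\bigl(4i(g-i) - g\bigr).$$
Substituting these values into Corollary~\ref{Cor: No tau} and simplifying the resulting expression (the algebra is elementary: the $\ell(\Gamma)$ term and the $r(K,K)$ term conspire to cancel a common factor of $g-1$) yields $\Zphi{\agam_j'} = \tfrac{2i(g-i)L}{g}$, as required. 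This final simplification is the only place the combinatorial factor $\tfrac{2i(g-i)}{g}$ actually materializes and is where one must be careful with bookkeeping, but beyond that the argument is essentially formal.
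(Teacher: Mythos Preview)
The paper does not supply its own proof of this lemma; it simply cites Zhang's \emph{Gross--Schoen cycles} paper (Lemma~4.3.1 and Proposition~4.4.1 there). So there is no in-paper argument to compare against, and your proposal effectively supplies what the paper only quotes.

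Your argument is correct. The structural decomposition via the block/bridge decomposition is the standard one, and your use of the additivity lemma (Lemma~\ref{Lem: Additivity}) reduces everything to the single computation of $\Zphi{\cdot}$ on a weighted interval. I checked that computation: with $K=(2i-1)[p_-]+(2(g-i)-1)[p_+]$, one gets $r(K,K)=2(2i-1)(2(g-i)-1)L$ and $\Zepsilon{\agam_j'}=\frac{L}{g}\bigl(4i(g-i)-g\bigr)$; plugging into Corollary~\ref{Cor: No tau} and writing $A=4i(g-i)$ gives
\[
\Zphi{\agam_j'}=\frac{L}{4g(g-1)}\Bigl[(5g-2)(A-g)-3g(A-2g+1)-g(g-1)\Bigr]=\frac{L\cdot 2(g-1)A}{4g(g-1)}=\frac{2i(g-i)}{g}L,
\]
exactly as you claim. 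Two small points worth tightening in your write-up: (i) the phrase ``maximal irreducible subgraph'' is awkward on a pure bridge segment (every singleton is irreducible), so it is cleaner to say the $\Gamma_i$ are the nontrivial blocks and the isolated $q$-supported bridge points, with the $I_j$ being the complementary closed arcs; and (ii) you should note that each side of a bridge has genus at least $1$ (forced by effectivity of $K$ on the original $\agam$), so the interval endpoint weights indeed lie in $[1,g-1]$ and the induced pm-graph structure on $I_j$ is legitimate. Neither affects the substance.
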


We extend our terminology slightly from \S\ref{Sec: Main Definitions}. Let $\agam = (\Gamma, q)$ be a pm-graph and $p \in \Gamma$ any point. Then $p$ is of \textbf{positive type} if it is a point of type $i$ for some $i > 0$. Equivalently, $p$ is of positive type if $p$ has valence $2$, $q(p) = 0$, and $\Gamma \smallsetminus \{p\}$ is disconnected. Note that an irreducible metric graph has no points of positive type, but the converse is false. For example, the pointed sum of two circles can be disconnected by deleting their common point, but the type of this point is not defined.

\begin{lem} \label{Lem: All type 0}
	Fix $g \geq 2$. Suppose there exists a positive constant $c(g)$ such that for every pm-graph 
	$\agam = (\Gamma, q)$ of genus $g$ with no points of positive type, 
		\benn
			\Zphi{\agam} \geq c(g) \ell(\Gamma).
		\eenn
	Then for an arbitrary pm-graph of genus $g$, we have
		\benn
			\Zphi{\agam} \geq c(g)\ell_0\left(\agam\right) + \sum_{i \in (0, g/2]} \frac{2i(g-i)}{g}\ell_i\left(\agam\right).	
		\eenn
\end{lem}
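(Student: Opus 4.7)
The plan is to feed $\agam$ into Lemma~\ref{Lem: Decomp}, which writes $\Gamma$ as a successive pointed sum of its maximal irreducible pm-subgraphs $\agam_i = (\Gamma_i, q_i)$ together with closed intervals $I_j$ on whose interiors $q$ vanishes, and which yields the identity
\[
\Zphi{\agam} = \sum_i \Zphi{\agam_i} + \sum_{i \in (0, g/2]} \frac{2i(g-i)}{g}\ell_i\left(\agam\right).
\]
The second summand already has the shape of the desired inequality, so what remains is twofold: first, bound each $\Zphi{\agam_i}$ below by $c(g)\ell(\Gamma_i)$ using the hypothesis; second, identify $\sum_i \ell(\Gamma_i)$ with $\ell_0(\agam)$.

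For the first task, Lemma~\ref{Lem: Additivity} guarantees that each $\agam_i$ has genus $g$, so the hypothesis is in principle applicable; I only need to verify that $\agam_i$ has no points of positive type. But this is immediate from the definition of irreducibility: since $\Gamma_i \smallsetminus \{p\}$ is connected for every $p \in \Gamma_i$, no point of $\agam_i$ satisfies the disconnection requirement demanded by the definition of positive type. The hypothesis then delivers $\Zphi{\agam_i} \geq c(g)\ell(\Gamma_i)$ for every $i$.

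For the second task, I would argue pointwise. A point $p$ lying in the interior of some $\Gamma_i$ (i.e.\ not one of the finitely many attachment points that glue $\Gamma_i$ to the rest of the decomposition) retains its valence and its $q$-value when passing from $\agam_i$ to $\agam$, since the retraction $\pi_i$ satisfies $\pi_i^{-1}(p) = \{p\}$ at such $p$; and removing $p$ does not disconnect $\Gamma_i$ by irreducibility, hence does not disconnect $\agam$ either. Thus such a $p$ of valence $2$ and $q(p) = 0$ is of type~0 in $\agam$. Conversely, removing any interior point of a closed interval $I_j$ disconnects $I_j$ and therefore disconnects $\agam$, so the interior of $I_j$ consists entirely of points of positive type. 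The finitely many attachment points contribute measure zero, whence $\ell_0(\agam) = \sum_i \ell(\Gamma_i)$. Combining this identity with the bound from the previous paragraph and substituting into the formula from Lemma~\ref{Lem: Decomp} gives the claim. There is no serious obstacle here: the lemma is essentially a bookkeeping exercise, with all of the analytic content already packaged into Lemma~\ref{Lem: Decomp}.
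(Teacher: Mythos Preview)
Your proposal is correct and follows essentially the same route as the paper: invoke Lemma~\ref{Lem: Decomp}, apply the hypothesis to each maximal irreducible piece $\agam_i$ (which has genus $g$ and no points of positive type by irreducibility), and identify $\sum_i \ell(\Gamma_i)$ with $\ell_0(\agam)$. The only difference is that you spell out the measure-zero bookkeeping for the identity $\ell_0(\agam) = \sum_i \ell(\Gamma_i)$, whereas the paper simply asserts it as evident.
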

			
\begin{proof}
	Let $\agam = (\Gamma, q)$ be an arbitrary pm-graph of genus $g$. By Lemma~\ref{Lem: Decomp} we have		\benn
		\Zphi{\agam} = \sum_j \Zphi{\agam_j} + \sum_{i \in (0, g/2]} \frac{2i(g-i)}{g}\ell_i\left(\agam\right),
	\eenn
where $\Gamma_j$ are the maximal irreducible subgraphs of $\Gamma$. By irreducibility, every point of $\Gamma_j$ is of type~$0$ or else the type is undefined. We may apply the hypothesis of the lemma to conclude that $\Zphi{\agam_j} \geq c(g) \ell(\Gamma_j)$. Evidently $\ell_0\left(\agam\right) = \sum_j \ell(\Gamma_j)$, which completes the proof.
\end{proof}

	Next we reduce to the case where $q$ is identically zero. We first provide an explicit formula in the special case when $\agam$ is a circle and $q$ is supported at a single point.
	
\begin{lem} \label{Lem: Constants for Circles}
	Let $C$ be a circle of length $\ell(C)$, let $p \in C$ be a point, and let $g \geq 1$ be an integer. 
	Set $\alg{C} = (C, q)$ to be the $pm$-graph with $q(p) = g-1$ and $q(x) = 0$ for $x \not=p$. Then 
		\benn
			\Zphi{\alg{C}}  =  \frac{g-1}{6g}\ell(C).
		\eenn
\end{lem}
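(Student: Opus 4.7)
The plan is to apply Corollary~\ref{Cor: No tau} after directly computing $\Zepsilon{\agam}$ and $r(K,K)$ from the explicit formula for the admissible measure $\mu$ on a circle. Choose the natural model $G$ of $\alg{C}$ with a single vertex at $p$ and a single loop edge $e$ of length $L = \ell(C)$. Since every point of $C$ has valence $2$, the canonical divisor is
\benn
    K = \bigl(2q(p) + v(p) - 2\bigr)[p] = (2g-2)[p],
\eenn
as all other points contribute $0$.

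Next I would evaluate the two pieces appearing in Corollary~\ref{Cor: No tau}. Since the only point in the support of $K$ is $p$, we have $r(K,K) = (2g-2)^2 \, r(p,p) = 0$. For $\Zepsilon{\agam}$, note that for the loop edge $e$ the endpoints coincide, so $r(e) = 0$ and $F(e) = 1$; the formula for $\mu$ from Section~\ref{Sec: Admissible Stuff} then collapses to
\benn
    \mu = \frac{1}{g}\left\{(g-1)\,\delta_p + \frac{dx}{L}\right\},
\eenn
which indeed has total mass $1$. The effective resistance from $p$ to the point $y$ at arc length $s$ along the circle is given by two parallel paths of lengths $s$ and $L-s$, so $r(p,y) = s(L-s)/L$. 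A one-line calculus computation gives $\int_0^L r(p,y)\,ds = L^2/6$, hence
\benn
    \Zepsilon{\agam} = (2g-2)\int_C r(p,y)\,d\mu(y)
        = \frac{2g-2}{g}\left[(g-1)\cdot 0 + \frac{1}{L}\cdot \frac{L^2}{6}\right]
        = \frac{(g-1)L}{3g}.
\eenn

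Finally, substituting into Corollary~\ref{Cor: No tau},
\benn
    \Zphi{\alg{C}} = \frac{5g-2}{4(g-1)}\cdot \frac{(g-1)L}{3g} - 0 - \frac{L}{4}
        = \frac{(5g-2)L - 3gL}{12g} = \frac{(g-1)L}{6g},
\eenn
which is the desired identity. There is no real obstacle here: the only thing to be careful about is the treatment of the loop edge (making sure $r(e) = 0$ and hence $F(e) = 1$ in the admissible measure formula), and recognizing that $r(p,p) = 0$ makes the $r(K,K)$ term vanish entirely.
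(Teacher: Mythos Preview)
Your computation is correct for $g \geq 2$ and is precisely the kind of direct calculation the paper intends: its own proof reads ``This computation is sufficiently simple to be done by hand and we leave it as an exercise for the reader.'' One small caveat: Corollary~\ref{Cor: No tau} carries a factor of $1/(g-1)$, so it does not literally apply when $g=1$. In that boundary case $q\equiv 0$ and $K=0$, hence $\Zepsilon{\alg{C}}=0$ and both sides of the identity vanish; you can dispatch it in one line by computing $\Ztau{\alg{C}}=L/12$ directly from the definition and substituting into the original formula $\Zphi{\alg{C}}=3g\,\Ztau{\alg{C}}-\tfrac14(\Zepsilon{\alg{C}}+L)$.
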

	
\begin{proof}
	This computation is sufficiently simple to be done by hand and we leave it as an exercise for the reader. Alternatively, one could look at \cite[Prop.~4.4.3]{Zhang_Gross-Schoen_Cycles_2008}, of which the present lemma is a special case.
\end{proof}

\begin{lem} \label{Lem: q equals 0}
	Fix $g \geq 2$. Suppose there exists a positive constant $c(g)$ such that for every pm-graph 
	$\agam = (\Gamma, 0)$ of genus $g$ with no points of positive type, 
		\be \label{Eqn: q reduction}
			\Zphi{\agam} \geq c(g) \ell(\Gamma).
		\ee
	Then the above inequality holds for an arbitrary pm-graph of genus $g$ with no points of positive type. 
\end{lem}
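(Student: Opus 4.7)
The plan is to bootstrap from the $q \equiv 0$ case by surgery: at each vertex where $q$ is supported, attach small auxiliary circles, producing a pm-graph with $q \equiv 0$ whose $\Zphi$ differs from the original only by an explicitly computable amount.

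More precisely, let $\agam = (\Gamma, q)$ be a pm-graph of genus $g$ with no points of positive type. For each $p \in \Gamma$ with $q(p) > 0$, I would attach $q(p)$ disjoint circles at $p$, each of length $\epsilon > 0$, and form the pm-graph $\agam'_\epsilon = (\Gamma'_\epsilon, 0)$ in which all vertex weights are set to zero. The total number of circles attached is $N = \sum_{p \in \Gamma} q(p)$, and the Betti number of $\Gamma'_\epsilon$ increases by exactly $N$; thus $g(\agam'_\epsilon) = b_1(\Gamma) + N = g$ by \eqref{Eqn: Genus}. Next I would check $\agam'_\epsilon$ has no points of positive type: each attachment point $p$ now has valence at least $2 + 1 = 3$ (or $2N \geq 4$ in the degenerate case $\Gamma = \{p\}$, using $g \geq 2$); interior points of the attached circles are non-separating in $\Gamma'_\epsilon$; and every other point of $\Gamma$ has the same valence, $q$-value, and separation behavior as in $\agam$.

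To compare $\Zphi{\agam'_\epsilon}$ with $\Zphi{\agam}$, I would apply Lemma~\ref{Lem: Additivity} to the pointed-sum decomposition of $\Gamma'_\epsilon$ into $\Gamma$ together with the $N$ attached circles $C_1, \ldots, C_N$. A direct computation of the induced vertex weights shows that the induced pm-graph structure on $\Gamma$ coincides with $\agam$ (each attached circle contributes $\dim H^1 = 1$ to the induced weight at its attachment point, exactly reproducing $q$), while the induced weight on each $C_j$ is concentrated at its attachment point with value $g-1$. By Lemma~\ref{Lem: Constants for Circles}, each $\Zphi{(C_j, q_{C_j})} = \tfrac{g-1}{6g}\epsilon$. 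Additivity yields
\benn
    \Zphi{\agam'_\epsilon} = \Zphi{\agam} + \frac{g-1}{6g} N \epsilon.
\eenn

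Finally, I would apply the hypothesis to $\agam'_\epsilon$ (which has $q \equiv 0$, no points of positive type, and genus $g$) to obtain $\Zphi{\agam'_\epsilon} \geq c(g) \ell(\Gamma'_\epsilon) = c(g)(\ell(\Gamma) + N\epsilon)$. Combining with the previous display,
\benn
    \Zphi{\agam} \geq c(g) \ell(\Gamma) + N\epsilon \left( c(g) - \frac{g-1}{6g} \right),
\eenn
and letting $\epsilon \to 0^+$ gives the desired inequality. No continuity result beyond Lemmas~\ref{Lem: Additivity} and \ref{Lem: Constants for Circles} is required, because the error term is linear in $\epsilon$ and vanishes in the limit. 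The only mild obstacle is the bookkeeping verification that the induced weights in the pointed-sum decomposition come out as claimed and that $\agam'_\epsilon$ indeed has no points of positive type; both reduce to straightforward inspection of valences and retractions.
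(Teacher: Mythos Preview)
Your argument is correct and follows essentially the same route as the paper: attach small circles at the support of $q$, invoke the additivity of $\varphi$ under pointed sums together with the explicit value $\varphi(\alg{C}) = \tfrac{g-1}{6g}\ell(C)$ for a weighted circle, apply the hypothesis to the resulting $q\equiv 0$ graph, and let the circle lengths shrink to zero. The only cosmetic difference is that the paper attaches one circle at a time and inducts on $\sum_p q(p)$, whereas you attach all $N$ circles simultaneously; the ingredients and the final inequality are identical.
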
	

\begin{proof}
	We proceed by induction on the quantity $\sum_{x \in \Gamma} q(x)$. If $\sum_{x \in \Gamma} q(x)=0$, then $q \equiv 0$ and \eqref{Eqn: q reduction} holds by hypothesis. Let us suppose that we have proved \eqref{Eqn: q reduction} holds for all pm-graphs $\agam=(\Gamma, q)$ whenever $\Gamma$ has no points of positive type and $\sum_{x \in \Gamma} q(x) \leq N$. Choose $\agam = (\Gamma, q)$ to be a pm-graph with $N+1 =\sum_{x \in \Gamma} q(x)$ and no points of positive type. Then there is a point $p \in \Gamma$ with $q(p) >0$. For each $t > 0$, define a new pm-graph $\agam_t = (\Gamma_t, q_t)$ as follows:
	\bi
		\item The metric graph $\Gamma_t$ is obtained from $\Gamma$ by gluing a circle of length
			$t$ to the point $p$. Label the circle $C_t$.
								
		\item Set 
				\[
						q_t(x) =  \begin{cases}
					q(x) & \text{if $x \in \Gamma \smallsetminus \{p\}$} \\
					q(p)- 1 & \text{if $x=p$.} \\
					0 &  \text{if $x \in C_t \smallsetminus \{p\}$} \\
					\end{cases}
				\]
	\ei
Then $\agam_t$ is a pm-graph of genus $g$ and $\sum_{x \in \Gamma_t} q(x) = N$. Moreover, every point of $C_t \smallsetminus\{p\}$ has type~0, and so $\Gamma_t$ has no points of positive type. By the induction hypothesis, the inequality~\eqref{Eqn: q reduction} holds for $\Gamma_t$.

	Define $\alg{C}_t = (C_t, q')$ by setting $q'(p) = g-1$ and $q'(x) = 0$ otherwise. Then the induced pm-graph structures on the metric subgraphs $\Gamma$ and $C_t$ of $\Gamma_t$ are given by $\agam$ and $\alg{C}_t$, respectively. By the additivity described in Lemma~\ref{Lem: Additivity}, we have
	\benn
		\ba
			 \Zphi{\agam_t} &= \Zphi{\agam}+ \Zphi{\alg{C}_t} \\
			\Rightarrow \Zphi{\agam} &= \Zphi{\agam_t} - \Zphi{\alg{C}_t} \\
				&\geq c(g)\ell(\Gamma_t) -  \frac{g-1}{6g}t  
					\quad \text{by Lemma~\ref{Lem: Constants for Circles}} \\
				& = c(g) \ell(\Gamma) +  \left( c(g) -\frac{g-1}{6g} \right)t.
		\ea
	\eenn
Let $t \to 0$ to obtain the desired bound for $\Zphi{\agam}$. This completes the induction step.
\end{proof}
		
\begin{lem} \label{Lem: Reduction Lemma}
	Fix $g \geq 2$. Suppose there exists a positive constant $c(g)$ such that for every irreducible cubic 
	pm-graph $\agam = (\Gamma, 0)$ of genus $g$,   
		\be	\label{Eq: Phi Inequality}
			\Zphi{\agam} \geq c(g) \ell(\Gamma).
		\ee
	Then the above inequality holds for an arbitrary pm-graph $\agam = (\Gamma, 0)$ of genus $g$ with no 
	points of positive type.
\end{lem}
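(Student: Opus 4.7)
My plan is to prove Lemma~\ref{Lem: Reduction Lemma} by constructing, for any pm-graph $\agam = (\Gamma, 0)$ of genus $g$ with no points of positive type, an irreducible cubic pm-graph $\agam^* = (\Gamma^*, 0)$ of the same genus together with a finite collection of non-loop edges of $\Gamma^*$ whose simultaneous contraction (letting their lengths tend to zero) returns $\Gamma$. Once such an inflation is in hand, the iterated application of Proposition~\ref{Prop: Continuity} gives $\Zphi{\agam^*_t} \to \Zphi{\agam}$ and $\ell(\Gamma^*_t) \to \ell(\Gamma)$ as the contracted edge lengths tend to zero. The hypothesis supplies $\Zphi{\agam^*_t} \geq c(g)\ell(\Gamma^*_t)$ for every positive value of these lengths, so passing to the limit yields $\Zphi{\agam} \geq c(g)\ell(\Gamma)$.

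To construct $\Gamma^*$, I first exploit the structural constraints on $\Gamma$. Since $q \equiv 0$, the effectivity of the canonical divisor rules out valence-$1$ points, and the no-positive-type hypothesis then forbids valence-$2$ bridge points, since removing such a point would split $\Gamma$ into two pieces of positive genus. Hence the minimal model of $\Gamma$ has every vertex of valence $\geq 3$, and any cut vertex has valence $\geq 4$ because each of the two irreducible components meeting at it contributes valence at least~$2$. I build $\Gamma^*$ vertex-by-vertex: at every vertex $v$ of valence $n \geq 4$ in the minimal model, I replace $v$ by a cubic gadget of $n - 2$ new valence-$3$ vertices joined by $n - 3$ new edges of small length $t$, with the $n$ original edges at $v$ attached as the external ports of the gadget; vertices of valence exactly $3$ are left alone. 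Since each gadget adds $n - 2$ vertices and $n - 3$ edges, the first Betti number and hence the genus is preserved, and by construction $\Gamma^*$ is cubic with $q \equiv 0$; contracting the new edges recovers $\Gamma$.

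The delicate point is to distribute the $n$ original edges across each gadget so that $\Gamma^*$ is irreducible. When $v$ is not a cut vertex of $\Gamma$, any distribution works, because $\Gamma \smallsetminus \{v\}$ is already connected and this connectivity persists after the local replacement. When $v$ is a cut vertex separating $\Gamma$ into subgraphs $A$ and $B$, the naive choice that places all $A$-connections on one side of the gadget and all $B$-connections on the other turns the gadget's internal edges into cut edges; I avoid this by \emph{interleaving} the $A$- and $B$-connections so that each internal gadget vertex has neighbors on both sides of the original cut. The paradigm is the passage from the figure-eight graph to the theta graph, in which the valence-$4$ cut vertex carrying two loops is replaced by two new cubic vertices and the four loop endpoints are paired across them to produce three parallel edges.

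The main obstacle will be establishing the existence of an irreducible cubic inflation in full combinatorial generality, when $\Gamma$ may contain several cut vertices of arbitrary valence, possibly with numerous loops attached. I anticipate handling this by induction on the number or total valence of cut vertices, resolving them one at a time with appropriately interleaved gadgets and verifying that irreducibility is preserved throughout; the flexibility afforded by having $n - 2$ internal vertices in a valence-$n$ gadget should allow the required distribution in every local configuration. With the combinatorial construction in place, the remainder of the proof reduces to the iterated continuity argument sketched in the first paragraph.
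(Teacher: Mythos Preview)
Your overall strategy---inflate each high-valence vertex to a cubic gadget and then invoke Proposition~\ref{Prop: Continuity}---is the same as the paper's, but your execution is more laborious than necessary because you try to manufacture \emph{irreducibility} directly. The paper avoids this by making a single observation at the outset: if $\agam=(\Gamma,0)$ is cubic and has no points of positive type, then $\Gamma$ is automatically irreducible. (Reason: a cut point of a cubic graph has valence~$3$, so one of the pieces of $\Gamma\smallsetminus\{p\}$ contributes a single tangent direction at $p$; any nearby point in that direction is then a valence-$2$ disconnecting point, contradicting the hypothesis.) With this in hand the paper runs a reverse induction on $N_3(\Gamma)$, the number of valence-$3$ points: at each step it splits a single vertex $p_0$ of valence $\nu\ge 4$ into a valence-$3$ vertex $\tilde p_1$ and a valence-$(\nu-1)$ vertex $\tilde p_2$ joined by a short edge, and only needs to check that the new graph still has \emph{no points of positive type}. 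That is a local, one-line verification (if $p_0$ is a cut vertex, assign the two ports at $\tilde p_1$ to distinct components of $\Gamma\smallsetminus\{p_0\}$), and irreducibility comes for free once the induction terminates at a cubic graph.

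Your plan, by contrast, postpones this observation and therefore has to build the entire cubic inflation $\Gamma^*$ in one shot and certify it is irreducible. That is genuinely harder: your gadget description (a caterpillar with $n-2$ vertices and $n-3$ internal edges) is correct, but the ``interleaving'' prescription is only sketched for a cut vertex with exactly two complementary components, whereas a cut vertex of valence $n$ can separate $\Gamma$ into up to $\lfloor n/2\rfloor$ pieces, and several cut vertices can interact. None of this is fatal---one can prove such an irreducible inflation always exists---but it is exactly the combinatorial bookkeeping that the paper's inductive invariant (``no positive-type points'') sidesteps entirely. I would recommend you adopt the paper's observation and split one vertex at a time; then the ``main obstacle'' you flag simply disappears.
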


\begin{proof}
	Let $\agam = (\Gamma, 0)$ be a pm-graph of genus $g$ with no points of positive type. It follows that if $\Gamma \smallsetminus \{p\}$ is disconnected for some $p \in \Gamma$, then the valence of $p$ is at least $3$. If moreover $\Gamma$ is cubic, then it must be irreducible. Indeed, if $\Gamma \smallsetminus \{p\}$ is disconnected, then $p$ is a point of valence~$3$. There is a component of $\Gamma \smallsetminus \{p\}$ that contributes a single tangent direction at $p$ since three tangent directions must be shared among at least two components. Any point near $p$ in this distinguished tangent direction will be a point of positive type, which is a contradiction.
	
	Now we proceed by reverse induction on the number of points of $\Gamma$ of valence three, denoted $N_3(\Gamma)$. Observe that
	\benn
			N_3(\Gamma) = \sum_{\substack{p \in \Gamma \\ v(p) = 3}} 1 
			\leq \sum_{p \in \Gamma} (v(p) - 2) 
			= \deg K = 2g-2,
	\eenn
and equality is achieved precisely when $\Gamma$ is cubic. So if $N_3(\Gamma) = 2g-2$, then \eqref{Eq: Phi Inequality} holds by the main hypothesis of the lemma. 

	We now suppose that $N_3(\Gamma) < 2g-2$. There must exist a point $p_0 \in \Gamma$ of valence $\nu > 3$. For each $t > 0$, we define a new graph $\agam_t = (\Gamma_t, 0)$ as follows. Delete the point $p_0$ from $\Gamma$ and compactify the resulting graph by adjoining new limit points $x_1, \ldots, x_\nu$ at the ends of each of the edges that previously met at $p$. If $\Gamma \smallsetminus \{p_0\}$ is disconnected, we further assume that $x_1$ and $x_2$ belong to \textit{distinct} components of the compactification (after relabeling if necessary). The metric graph $\Gamma_t$ is obtained from this compactification by gluing $x_1$ and $x_2$ to one end of a closed segment of length $t$, and by gluing $x_3, \ldots, x_\nu$ to the other end of the segment. (See Figure~\ref{Fig: Cubic_Reduction}.) We call the segment $e_0 = \{\tilde{p}_1, \tilde{p}_2\}$, where $\tilde{p}_1$ is the image of $x_1$ and $x_2$, while $\tilde{p}_2$ is the image of $x_3, \ldots, x_\nu$. By construction $\Gamma_t$ is connected and has no points of positive type for each $t > 0$. There is a canonical deformation retraction $r_t: \Gamma_t \to \Gamma$ mapping the closed segment $e_0$ to $p_0$. 
	
	By construction the valence of $\tilde{p}_1$ is $3$. In particular, $N_3(\Gamma_t) > N_3(\Gamma)$, so the induction hypothesis implies
	\benn
		\Zphi{\agam_t} \geq c(g) \ell(\Gamma_t) = c(g) \ell(\Gamma) + c(g)t.
	\eenn
Therefore, to complete the proof it suffices to show that 
	\benn
		\Zphi{\agam} = \lim_{t \to 0} \Zphi{\agam_t}.
	\eenn
But this is precisely the content of Proposition~\ref{Prop: Continuity}. We have now completed the induction step. 
\end{proof} 

\begin{figure}[htb]
	\begin{picture}(100,48)(107,-5)

		\put(0,0){\includegraphics{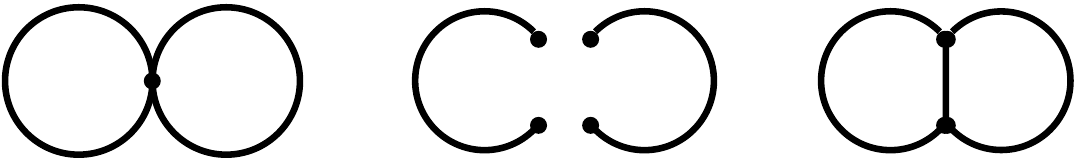}}
		\put(85,-5){$\agam$}
		\put(50,20){$p_0$}
		\put(98, 20){$\leadsto$}
		\put(141, 30){$x_1$}
		\put(176, 30){$x_2$}
		\put(141, 14){$x_3$}
		\put(174, 14){$x_4$}
		\put(216, 20){$\leadsto$}
		\put(308,-5){$\agam_t$}
		\put(259, 29){$\tilde{p}_1$}
		\put(281, 12){$\tilde{p}_2$}
		
	\end{picture}
	
	\caption{Here is an illustration of the construction of $\agam_t$ given in the proof of 
		Lemma~\ref{Lem: Reduction Lemma} in the case where the valence of $p_0$ is $\nu = 4$.} 
	\label{Fig: Cubic_Reduction}
\end{figure}

\begin{proof} [Proof of Proposition~\ref{Reduce conjecture}]
	Apply the sequence of Lemmas~\ref{Lem: All type 0}, \ref{Lem: q equals 0}, and~\ref{Lem: Reduction Lemma}.
\end{proof}


\subsection{Strategy for the Proof of Theorem~\ref{Thm: Graph Bounds}}
\label{Sec: Strategy}

	We have seen in Proposition~\ref{Reduce conjecture} that, to prove Theorem~\ref{Thm: Graph Bounds}, it suffices to prove it when $\agam = (\Gamma, 0)$ is an irreducible cubic pm-graph of genus $g \geq 2$. Observe that such a graph admits a \textbf{canonical model} $G^{\circ}$ with precisely $2g-2$ vertices. Indeed, we may take the set of valence $3$ points as a vertex set, and the calculation in the proof of Lemma~\ref{Lem: Reduction Lemma} shows this set has cardinality $2g-2$. This model may have multiple edges, but it cannot have loop edges by irreducibility. Note further that $g = \#E(G^\circ) - \#V(G^\circ) + 1$, which shows
	\benn
		\#V(G^\circ) = 2g-2 \qquad \#E(G^\circ) = 3g - 3.
	\eenn
	
	The next step is to list all possible combinatorial graphs that can arise as the canonical model $G^{\circ}$ as above, up to combinatorial isomorphism. One simple way to do it is to note that such a graph admits a maximal spanning tree consisting of $2g-2$ vertices and $2g-3$ edges. It is easy to write down all of the trees of this type for small values of $g$. Given any such tree we build candidates for $G^\circ$ by adjoining $g$ edges to it. Many of these will yield reducible graphs or graphs that fail to be $3$-regular. Of the remaining candidates, many will be isomorphic. Rather than go through the details, we simply give the complete list of such graphs up to combinatorial isomorphism in Figure~\ref{Fig: All Graphs} below.
	
	Now let us suppose we have a combinatorial graph $G^{\circ}$ constructed as in the last paragraph. It is the canonical model of some metric graph $\agam$ with edge lengths $\ell_1, \ldots, \ell_m$, where $m = 3g-3$. By Proposition~\ref{Prop: Special Forms}, we know that
	\be \label{Eqn: Special Forms}
		\ba
			\Zphi{\agam} &= \frac{g-1}{6g} \ell(\Gamma) 
				- \frac{\omega_1(\ell_1, \ldots, \ell_m)}{\eta(\ell_1, \ldots, \ell_m)^2}.
		\ea
	\ee
	
	Finally, we are reduced to the problem of bounding the quotient $\omega_1 / \eta^2$. The next proposition illustrates the strategy we will take for obtaining such a bound.
		
\begin{prop} \label{Prop: Verify}
	Let $\agam = (\Gamma, 0)$ be an irreducible cubic pm-graph of genus $g \geq 2$. 
	Let $G^{\circ}$ be its canonical model with edges $e_1, \ldots, e_m$. Let $\sigma_2$ and $\sigma_3$ be 
	the second and third symmetric polynomials in the lengths $\ell_1, \ldots, \ell_m$.
	If there exists a positive constant $A$ such that
		\be \label{Eqn: A Constant}
			\frac{\omega_1(\ell_1, \ldots, \ell_m)}{\eta(\ell_1, \ldots, \ell_m)^2} 
				\leq A \frac{\sigma_3(\ell_1, \ldots, \ell_m)}{\sigma_2(\ell_1, \ldots, \ell_m)},
		\ee
	then 
		\benn
			\Zphi{\agam}   \geq  \left[ \frac{g-1}{6g} 	- A\frac{3g-5}{9(g-1)} \right]\ell(\Gamma).				\eenn
\end{prop}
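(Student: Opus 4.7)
The plan is to combine the explicit formula for $\Zphi{\agam}$ from Proposition~\ref{Prop: Special Forms} with the hypothesized bound on $\omega_1/\eta^2$, reducing everything to a classical inequality among elementary symmetric polynomials. By \eqref{Eqn: Special Forms}, $\Zphi{\agam} = \frac{g-1}{6g}\ell(\Gamma) - \omega_1/\eta^2$, so feeding in $\omega_1/\eta^2 \leq A \sigma_3/\sigma_2$ gives immediately
\benn
	\Zphi{\agam} \geq \frac{g-1}{6g}\ell(\Gamma) - A\frac{\sigma_3}{\sigma_2}.
\eenn
Thus the proposition reduces to establishing the purely algebraic inequality
\be \label{Eq: target}
	\frac{\sigma_3(\ell_1,\ldots,\ell_m)}{\sigma_2(\ell_1,\ldots,\ell_m)} \leq \frac{3g-5}{9(g-1)}\ell(\Gamma),
\ee
where $\ell(\Gamma) = \sigma_1 = \ell_1 + \cdots + \ell_m$.

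Since the canonical model $G^\circ$ has $m = 3g-3$ edges, a direct arithmetic check shows $\frac{3g-5}{9(g-1)} = \frac{m-2}{3m}$. After this rewriting, the content of \eqref{Eq: target} is the clean statement
\benn
	3m\,\sigma_3 \leq (m-2)\,\sigma_1\,\sigma_2.
\eenn
Introducing the normalized symmetric means $e_k = \sigma_k / \binom{m}{k}$ and expanding both sides, every factor of $m(m-1)(m-2)/2$ cancels, and the inequality becomes simply
\benn
	e_3 \leq e_1\, e_2.
\eenn
I would then invoke Newton's inequalities, which assert $e_k^2 \geq e_{k-1}\,e_{k+1}$ for any $m \geq k+1$ positive reals; in particular $e_2^2 \geq e_1 e_3$, whence $e_3 \leq e_2^2/e_1$, and combining with Maclaurin's $e_2 \leq e_1^2$ yields $e_3 \leq e_1\, e_2$. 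This is valid whenever $m \geq 3$, which holds for all $g \geq 2$ since $m = 3g-3 \geq 3$.

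There is essentially no hard step: all of the heavy lifting has already been carried out in Proposition~\ref{Prop: Special Forms} and is encapsulated by the hypothesis \eqref{Eqn: A Constant}. The only points requiring care are the two bookkeeping reductions, namely rewriting the target constant $\frac{3g-5}{9(g-1)}$ as $\frac{m-2}{3m}$ and verifying cleanly that the binomial coefficients collapse to give $e_3 \leq e_1 e_2$. The substantive content of the whole proposition is therefore shifted to the production of the constant $A$ in \eqref{Eqn: A Constant} for each combinatorial type $G^\circ$ in the finite list of cubic models of genus $g \leq 4$, which is addressed in the subsequent sections.
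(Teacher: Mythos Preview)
Your proof is correct and follows essentially the same approach as the paper: both reduce to the symmetric-function inequality $e_3 \leq e_1 e_2$ (in your normalized means) via $m = 3g-3$, and both deduce it from Maclaurin-type inequalities. The only cosmetic difference is that the paper multiplies the two Maclaurin bounds $e_1 \geq e_3^{1/3}$ and $e_2 \geq e_3^{2/3}$ directly, whereas you route through Newton's inequality $e_2^2 \geq e_1 e_3$ together with $e_2 \leq e_1^2$; the content is identical.
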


	Before beginning the proof, let us indicate how one might use the proposition to find the desired constant $A$. Clearing denominators in \eqref{Eqn: A Constant}, we find the inequality
	\benn
		A  \sigma_3(\ell_1, \ldots, \ell_m)  \eta(\ell_1, \ldots, \ell_m)^2 
			- \omega_1(\ell_1, \ldots, \ell_m) \sigma_2(\ell_1, \ldots, \ell_m) \geq 0.
	\eenn 
The left side is a polynomial $\sum_{\alpha} f_\alpha(A) \ell_\alpha$, where $\alpha$ is a multi-index and $f_\alpha$ is a linear polynomial in $A$ with rational coefficients. If there exists a value of $A$ such that all of these linear polynomials are nonnegative, then of course $\sum_{\alpha} f_\alpha(A) \ell_\alpha \geq 0$ and the desired inequality holds. In order to find $A$, we solve the system of linear inequalities $f_\alpha(A) \geq 0$. As there can be thousands of such inequalities, we utilize \textit{Mathematica} for this step. 
\begin{withcode} The computations are summarized in the next section, and the \textit{Mathematica} notebooks are recreated afterward. \end{withcode}
\begin{withoutcode} The computations are summarized in the next section. \end{withoutcode}

\begin{proof}[Proof of Proposition~\ref{Prop: Verify}]
	We begin by showing the inequality
	\be \label{Eqn: Symmetric}
		\frac{\sigma_{3}}{\sigma_2}\leq \frac{m-2}{3m}\ell(\Gamma).
	\ee
We use the following well-known generalization of the arithmetic-geometric mean inequality:
		\benn
			\left[\frac{\sigma_1}{\binom{m}{1}}\right]^{1} \geq 
			\left[\frac{\sigma_2}{\binom{m}{2}}\right]^{1/2} \geq
			\left[\frac{\sigma_3}{\binom{m}{3}}\right]^{1/3} \geq \cdots \geq
			\left[\frac{\sigma_m}{\binom{m}{m}}\right]^{1/m},
		\eenn
where $\sigma_i$ is the $i$th symmetric polynomial in $\ell_1, \ldots, \ell_m$. Using the first two inequalities, we find
	\benn
		\frac{\sigma_1}{\binom{m}{1}}\frac{\sigma_2}{\binom{m}{2}}\geq
		\left[\frac{\sigma_3}{\binom{m}{3}}\right]^{1/3}\left[\frac{\sigma_3}{\binom{m}{3}}\right]^{2/3}
		= \frac{\sigma_3}{\binom{m}{3}}.
	\eenn
Rearranging and expanding out the binomial coefficients gives~\eqref{Eqn: Symmetric}, using $\sigma_1 = \ell(\Gamma)$.

	Now recall that the number of edges of $G^\circ$ is $m = 3g - 3$. Combining this fact with~\eqref{Eqn: A Constant} and~\eqref{Eqn: Symmetric} shows
	\benn
		\frac{\omega_1(\ell_1, \ldots, \ell_m)}{\eta(\ell_1, \ldots, \ell_m)^2} 
						\leq A \frac{\sigma_3(\ell_1, \ldots, \ell_m)}{\sigma_2(\ell_1, \ldots, \ell_m)} 
					\leq A\frac{3g-5}{9(g-1)}\ell(\Gamma).
	\eenn
The proof is complete upon applying this last inequality to \eqref{Eqn: Special Forms}.
\end{proof}

	We have now given the complete strategy for proving Theorem~\ref{Thm: Graph Bounds}, and all that remains is to provide the computations. To conclude this part, we give a general conjecture on the constant $A$ in the proposition, and consequently a strong conjectural lower bound for $\Zphi{\agam}$. 
		
\begin{conjecture} \label{Conj: Big Conjecture}
	With the notation of Proposition~\ref{Prop: Verify}, one may take
		\benn
			A = \frac{7(g-1)^2}{6g(3g-5)}.
		\eenn
	That is, Conjecture~\ref{Conj: Zhang} holds with $c(g) = (g-1)/ 27g$.
\end{conjecture}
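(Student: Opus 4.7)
The plan is to combine Proposition~\ref{Reduce conjecture} and Proposition~\ref{Prop: Verify} with a direct polynomial inequality. The former reduces the conjecture to bounding $\Zphi{\agam}$ for irreducible cubic pm-graphs $\agam = (\Gamma, 0)$ of genus $g$, and the latter further reduces it to proving
\[
	A\, \sigma_3(\ell_1, \ldots, \ell_{3g-3})\, \eta(\ell_1, \ldots, \ell_{3g-3})^2
		\;\geq\; \omega_1(\ell_1, \ldots, \ell_{3g-3})\, \sigma_2(\ell_1, \ldots, \ell_{3g-3})
\]
for each combinatorial type of canonical cubic model $G^{\circ}$, where $A = 7(g-1)^2/(6g(3g-5))$. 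A direct check shows that the bracket $\frac{g-1}{6g} - A\frac{3g-5}{9(g-1)}$ appearing in Proposition~\ref{Prop: Verify} collapses to $\frac{g-1}{27g}$, so the conjecture on $c(g)$ will follow. The strongest sufficient form of the inequality is to show that $P := A\sigma_3\eta^2 - \omega_1\sigma_2$ has nonnegative coefficients when expanded as a polynomial in the edge lengths; since edge lengths are positive, this will give the pointwise inequality.

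The first step is to expand $\omega_1$ and $\eta$ explicitly. Since $q \equiv 0$ and $G^{\circ}$ is $3$-regular, the canonical orders $K(p_i) = v(p_i) - 2 = 1$ at every vertex, so the formula of Remark~\ref{Rem: Explicit Formula} simplifies considerably. By Lemma~\ref{Lem: Modified Resistance} and Lemma~\ref{Lem: Modified Edge Resistance}, each effective-resistance term $R(p_i, p_j) = \eta \cdot r(p_i, p_j)$ equals the spanning-tree generating polynomial of the graph obtained from $G^{\circ}$ by fusing $p_i$ and $p_j$, while $\eta - R(e_k^-, e_k^+)/\ell_k$ equals $\eta|_{\ell_k = 0}$, the spanning-tree polynomial of $G^{\circ} \smallsetminus \{e_k\}$. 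Consequently each monomial coefficient of $P$ becomes a signed count of combinatorial configurations: pairs of spanning subgraphs of prescribed contractions or deletions of $G^{\circ}$, together with two or three distinguished edges playing the role of the $\sigma_2$ or $\sigma_3$ factors.

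The main step is to prove nonnegativity of these coefficients. The most promising strategy is to construct an explicit combinatorial injection: for every configuration contributing negatively to $P$ (those coming from $\omega_1 \sigma_2$), produce $A$ times as many positively-contributing configurations (from $\sigma_3 \eta^2$) via local moves, such as the matroid basis-exchange axiom applied to spanning trees or vertex splittings compatible with the $3$-regular structure. The very specific value $A = 7(g-1)^2/(6g(3g-5))$ strongly suggests such an injection exists and degenerates to equality on a distinguished extremal family of graphs. Identifying this family---my first guess would be chains of theta graphs, which already saturate Moriwaki's bound for $g = 2$---is a natural intermediate target; once the identity is established on the family, one could then attempt to propagate it to arbitrary $G^{\circ}$ by the edge-contraction continuity of Section~\ref{Sec: Continuity} combined with a monotonicity of $P$ under edge subdivision.

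The hard part will be coefficient nonnegativity uniformly in $g$. The number of isomorphism classes of irreducible $3$-regular multigraphs on $2g-2$ vertices grows super-exponentially, so the direct computational method used in Section~\ref{Sec: Data} for $g \leq 4$ cannot be extended to all $g$ without essentially new ideas; any uniform proof must exploit a structural identity not visible at small genus. An alternative analytic approach would be to find a sum-of-squares decomposition of $P$ whose summands are themselves interpretable in terms of effective resistances on $G^{\circ}$; this would have the advantage of yielding strict inequality with an explicit remainder. Either way, the central technical obstacle is to convert the difference $A\sigma_3\eta^2 - \omega_1\sigma_2$ into a manifestly nonnegative expression through a single combinatorial or algebraic identity that is uniform across the combinatorial moduli of cubic graphs of genus $g$.
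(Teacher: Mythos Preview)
This statement is a \emph{conjecture} in the paper, not a theorem; the paper does not prove it. Immediately after stating it, the author writes: ``This conjecture is born entirely from empirical data. It holds for all pm-graphs of genus $g = 2, 3, 4$, although it does not yield the smallest possible constant $A$ in some cases.'' So there is no ``paper's own proof'' to compare against; the paper only verifies the conjecture for $g \leq 4$ by the explicit \textit{Mathematica} computations of \S\ref{Sec: Data}.

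Your proposal is accordingly not a proof but a research outline, and you are candid about this: you say ``the most promising strategy is to construct an explicit combinatorial injection,'' you speculate about an extremal family (``my first guess would be chains of theta graphs''), and you concede that ``any uniform proof must exploit a structural identity not visible at small genus.'' None of these steps is carried out. The reductions you cite (Proposition~\ref{Reduce conjecture} and Proposition~\ref{Prop: Verify}) and the arithmetic check that $\frac{g-1}{6g} - A\frac{3g-5}{9(g-1)} = \frac{g-1}{27g}$ are correct and are exactly the framework the paper sets up, but the substantive content---establishing $A\sigma_3\eta^2 \geq \omega_1\sigma_2$ for every irreducible cubic graph of every genus---remains entirely open in your write-up, just as it does in the paper.

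One further caution: your proposed sufficient condition, that $P = A\sigma_3\eta^2 - \omega_1\sigma_2$ have all nonnegative coefficients, may already be too strong. The paper's computations for $g \leq 4$ solve the system $f_\alpha(A) \geq 0$ and find that the \emph{smallest} admissible $A$ is sometimes strictly less than $7(g-1)^2/(6g(3g-5))$ (e.g.\ $A = 7/26$ for $\agam_{4,2}$ versus the conjectural $3/8$), but there is no guarantee that for large $g$ the conjectural value of $A$ clears every coefficient. The conjecture only asserts the pointwise inequality on the positive orthant, which is weaker than coefficientwise nonnegativity.
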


	This conjecture is born entirely from empirical data. It holds for all pm-graphs of genus $g = 2, 3, 4$, although it does not yield the smallest possible constant $A$ in some cases. See Table~\ref{Table: Data}.

\subsection{Computational Data}
\label{Sec: Data}

	Here we provide all of the necessary data to complete the proof of Thereom~\ref{Thm: Graph Bounds}. First, we enumerate the possible isomorphism classes of irreducible cubic combinatorial graphs $G$ that can arise as the canonical model of a pm-graph $\agam = (\Gamma, 0)$ of genus $ 2 \leq g \leq 4$. As we have already indicated the strategy for finding all such graphs in the previous section, we simply list them in Figure~\ref{Fig: All Graphs}.

\begin{figure}[p] 

	\begin{picture}(200,420)(100,-15)

		\put(25,-10){\includegraphics{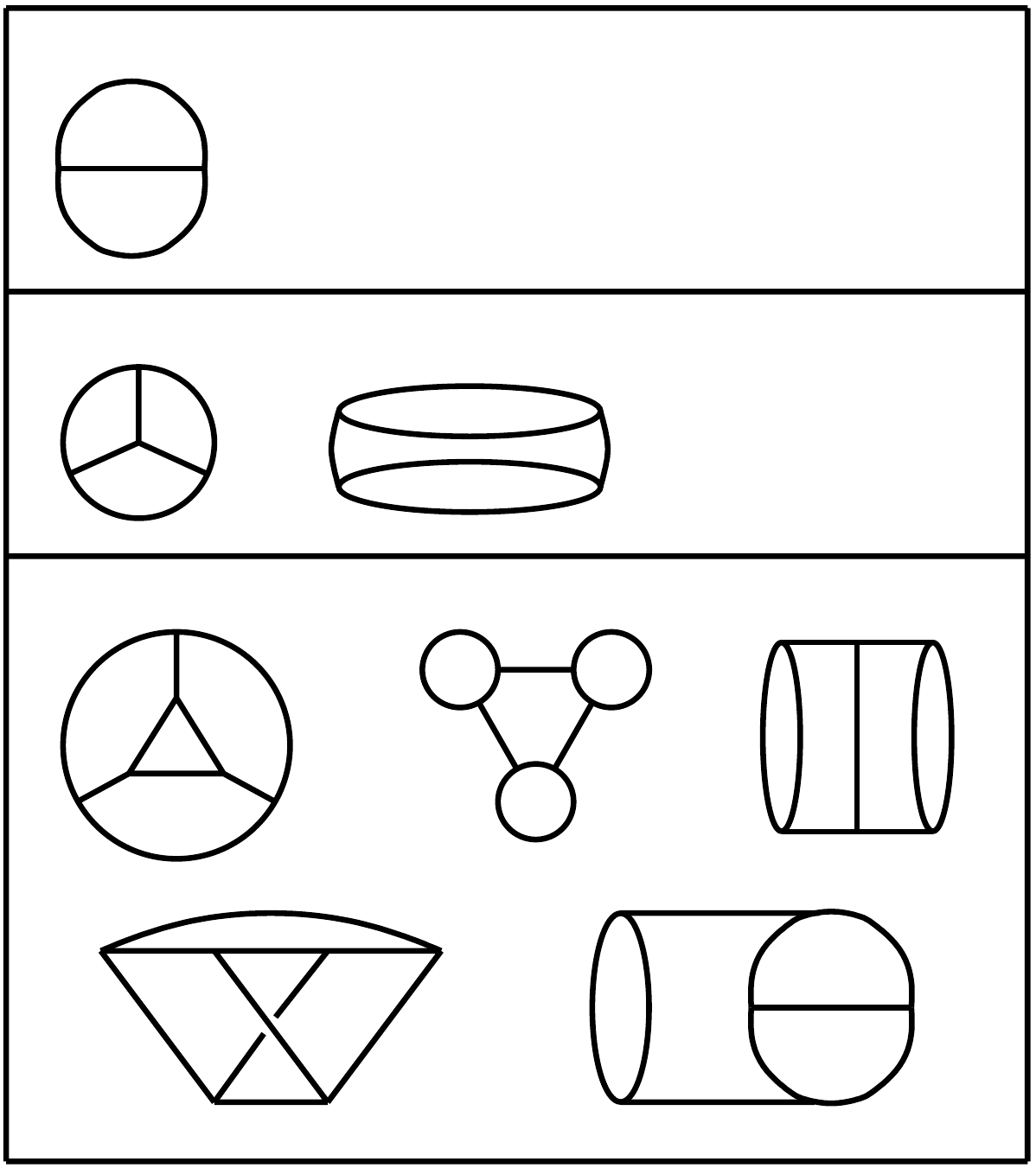}}
		
		\put(40, 366){\textbf{Genus $2$}}
		\put(40, 271){\textbf{Genus $3$}}
		\put(40, 182){\textbf{Genus $4$}}
		
		\put(95, 295){\textbf{$\agam_2$}}
		\put(96, 205){\textbf{$\agam_{3,1}$}}
		\put(230, 205){\textbf{$\agam_{3,2}$}}	
		\put(119, 98){\textbf{$\agam_{4,1}$}}
		\put(221, 98){\textbf{$\agam_{4,2}$}}
		\put(345, 98){\textbf{$\agam_{4,3}$}}
		\put(145, 8){\textbf{$\agam_{4,4}$}}
		\put(326, 8){\textbf{$\agam_{4,5}$}}
			
	\end{picture}
	
	\caption{Here we have a complete list of irreducible cubic pm-graphs with $q \equiv 0$ and genus at most $4$, 
		up to segment scaling and topological isomorphism. Equivalently, one can view this list as giving all 
		possible combinatorial graphs (up to isomorphism) that can arise as the canonical model of
		 such a pm-graph.}
	\label{Fig: All Graphs}
	
\end{figure}

	Table~\ref{Table: Data} summarizes the data obtained for the graphs in Figure~\ref{Fig: All Graphs}. All of the computation was performed in \textit{Mathematica} using the algorithms suggested in Section~\ref{Sec: Rapid}. The calculations are exact in the sense that no numerical methods were used. 
\begin{withoutcode}
To reiterate, the code used for these calculations can be found appended to the end of the arXiv edition of this article \cite{Faber_EGBC_arXiv_2008}.
\end{withoutcode}
	
	As there is only one graph of genus $2$ in the table, we see immediately that Theorem~\ref{Thm: Graph Bounds} holds with $c(2) = 1/27$. Both graphs of genus $3$ yield the same lower bound for $\Zphi{\agam}$, so we may take $c(3) = 2/81$ in the theorem. For genus $4$, two different lower bounds for $\Zphi{\agam}$ were obtained depending on whether the reductions in the previous section yield the graph $\agam_{4,2}$. Of course we take the smaller of the two and conclude that $c(4) = 1/36$ will suffice in the theorem. 
	
\begin{withcode}
	Following Table~\ref{Table: Data} is the \textit{Mathematica} code used for these computations. Some of the functions that appear there are useful for experimenting, but were not necessarily used to produce the results in this article. Finally, we reproduce the \textit{Mathematica} notebooks in which the computations were executed.
\end{withcode}
	
\begin{table}[p]

	\begin{tabular}{|c|c|c|}
		\hline
		$\stackrel{\phantom{=}}{\agam}$ & $A$ & $\frac{g-1}{6g} - A \frac{3g-5}{9(g-1)}$ \\
		\hline
		\hline
		$\stackrel{\phantom{=}}{\agam_2}$ & $5/12$ & $1 / 27$ \\
		\hline
		\hline
		$\stackrel{\phantom{=}}{\agam_{3,1}}$ & $7 / 18$ & $ 2/ 81$  \\
		\hline
		$\stackrel{\phantom{=}}{\agam_{3,2}}$ & $7 / 18$ & $2 / 81$  \\
		\hline
		\hline
		$\stackrel{\phantom{=}}{\agam_{4,1}}$& $3/8$ & $1/36 \approx 0.0278$  \\
		\hline
		$\stackrel{\phantom{=}}{\agam_{4,2}}$& $7/26$ & $155/2808 \approx 0.0552$  \\
		\hline
		$\stackrel{\phantom{=}}{\agam_{4,3}}$ & $3/8$ & $1/36$  \\
		\hline
		$\stackrel{\phantom{=}}{\agam_{4,4}}$ & $3/8$ & $1/36$  \\
		\hline
		$\stackrel{\phantom{=}}{\agam_{4,5}}$& $3/8$ & $1/36$  \\
		\hline
	\end{tabular}

	\vspace{0.5cm}
	
	\caption{The first column lists the names of the irreducible cubic pm-graphs given in 
		Figure~\ref{Fig: All Graphs}. The second column gives the smallest value of $A$ that
		satisfies the inequality \eqref{Eqn: A Constant} of Proposition~\ref{Prop: Verify}, while the
		third column gives the value of the constant provided by the conclusion of the proposition.}
	\label{Table: Data}
\end{table}

\begin{withcode}

\begin{figure*}[p] 

	\begin{picture}(200,420)(0,0)

		\put(-195,-290){\includegraphics{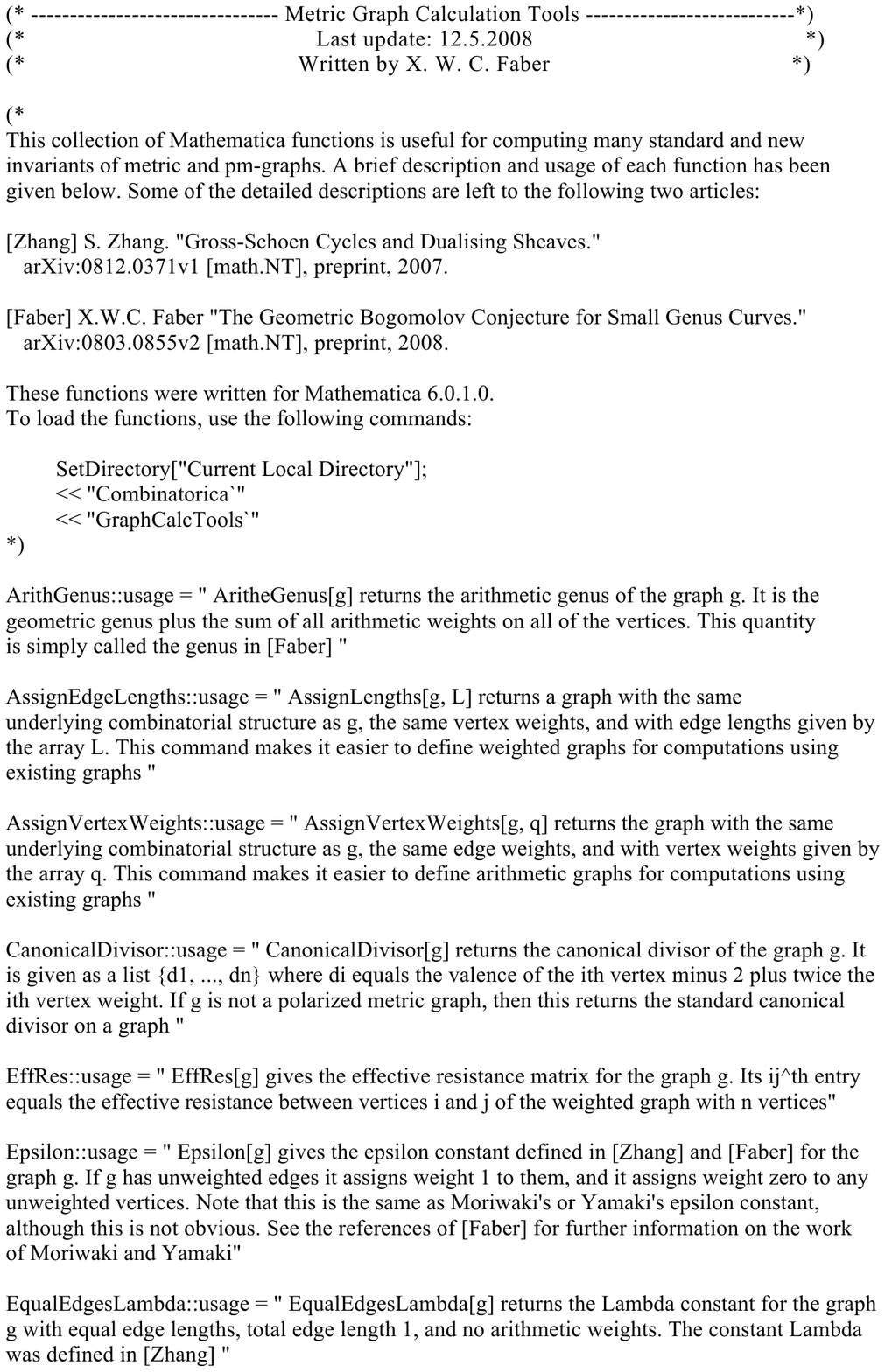}}

	\end{picture}
	
\end{figure*}

\begin{figure*}[p] 

	\begin{picture}(200,420)(0,0)

		\put(-195,-290){\includegraphics{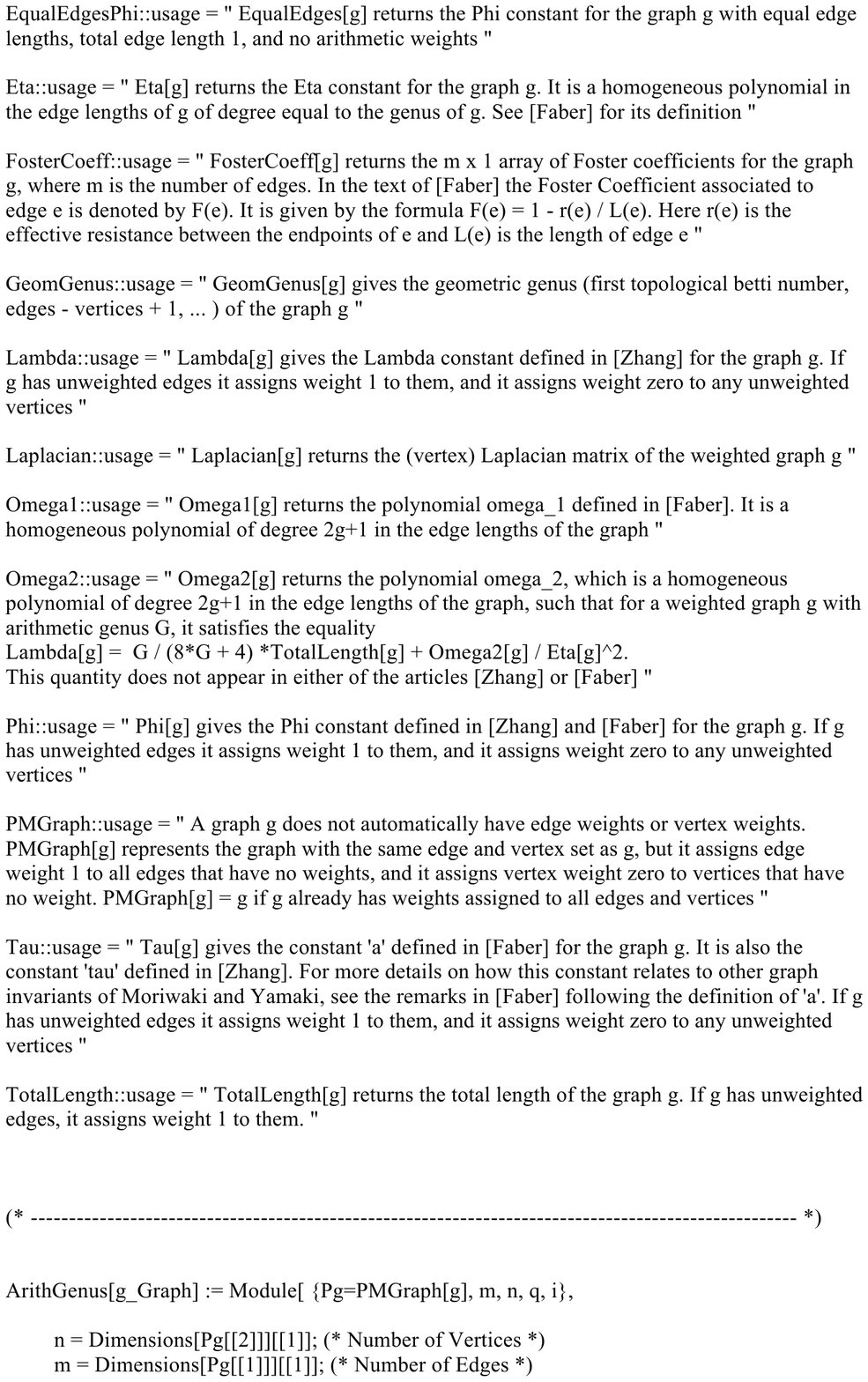}}

	\end{picture}
	
\end{figure*}

\begin{figure*}[p] 

	\begin{picture}(200,420)(0,0)

		\put(-195,-290){\includegraphics{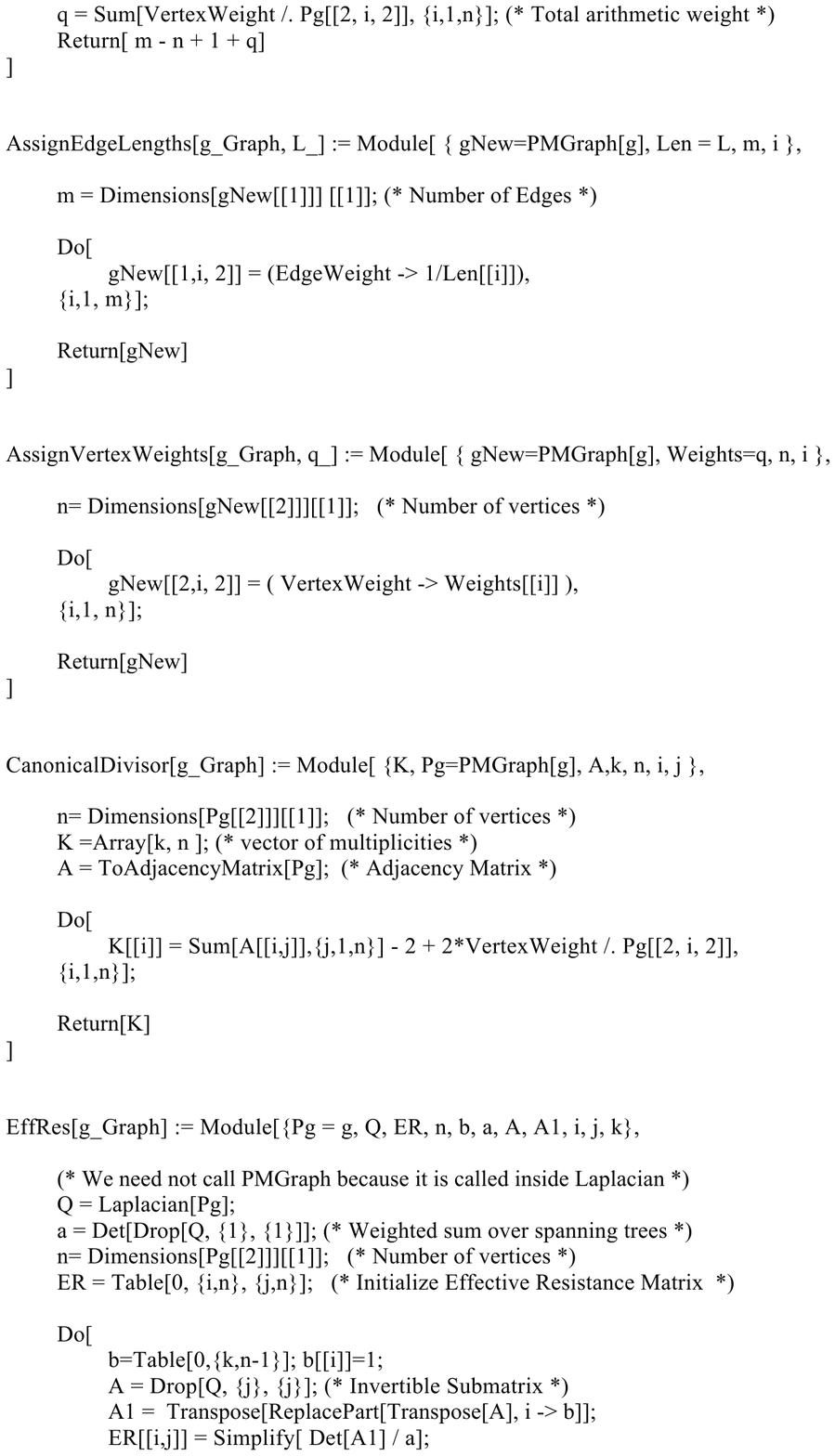}}

	\end{picture}
	
\end{figure*}

\begin{figure*}[p] 

	\begin{picture}(200,420)(0,0)

		\put(-195,-290){\includegraphics{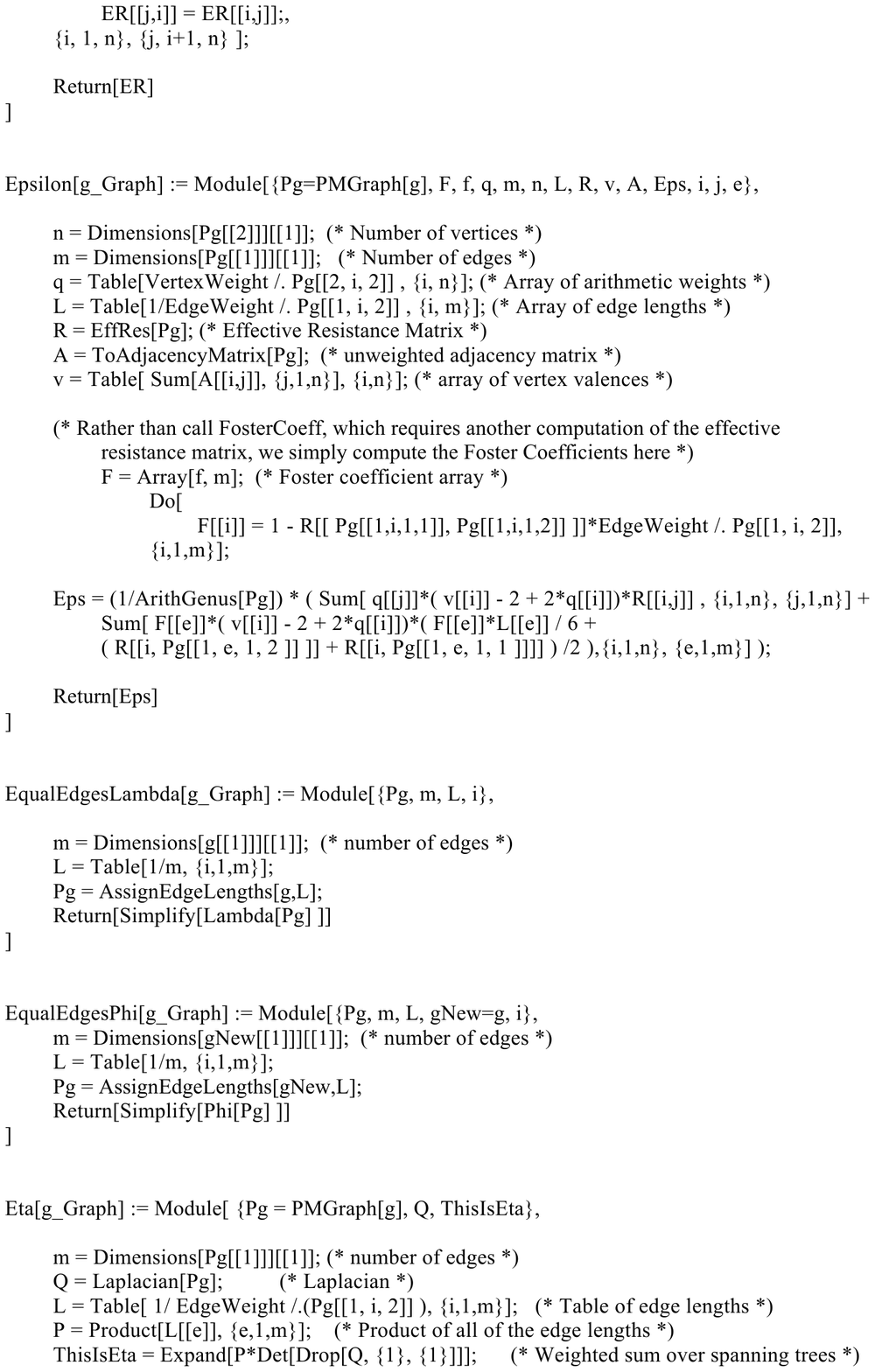}}

	\end{picture}
	
\end{figure*}

\begin{figure*}[p] 

	\begin{picture}(200,420)(0,0)

		\put(-195,-290){\includegraphics{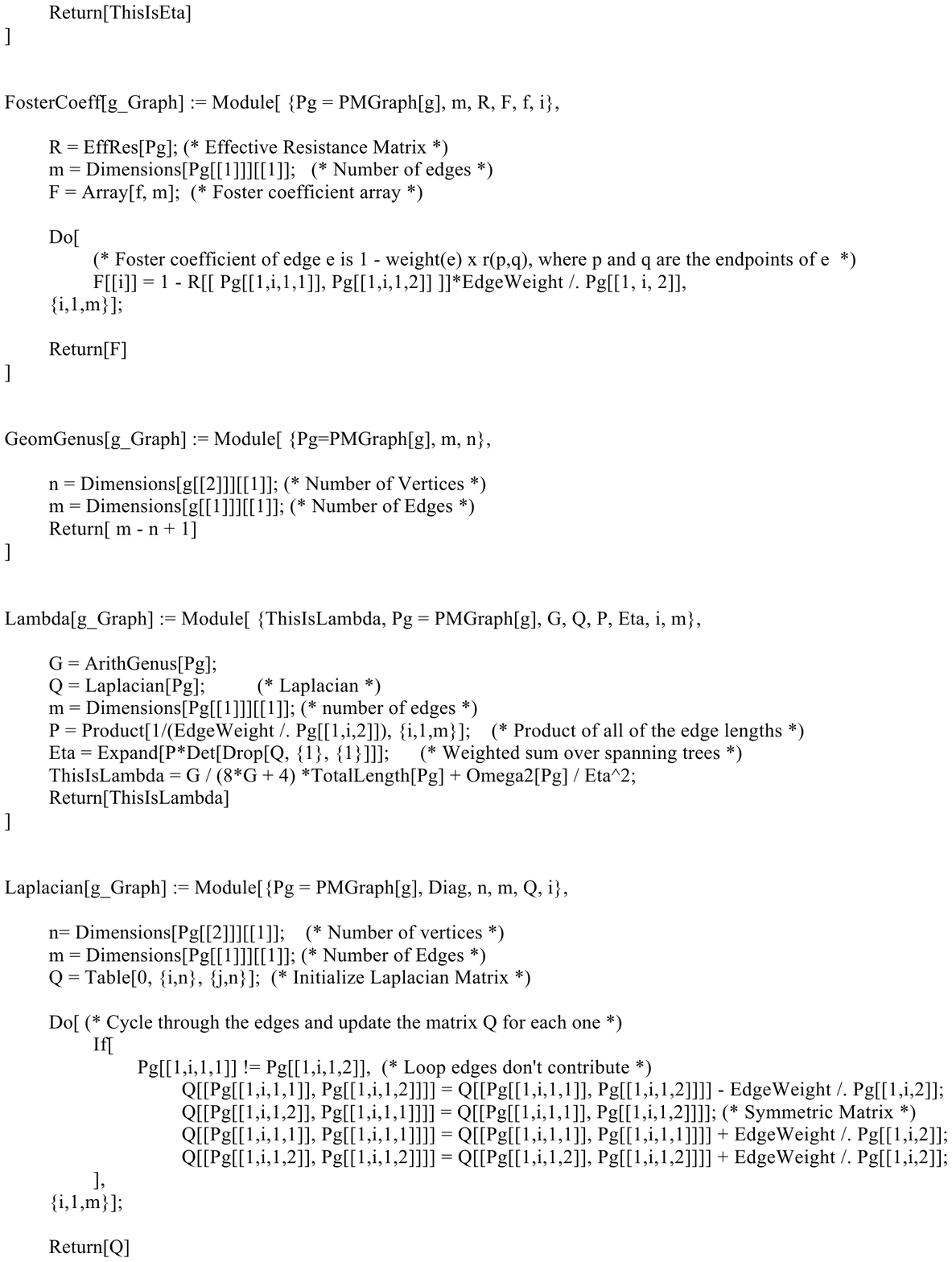}}

	\end{picture}
	
\end{figure*}

\begin{figure*}[p] 

	\begin{picture}(200,420)(0,0)

		\put(-195,-290){\includegraphics{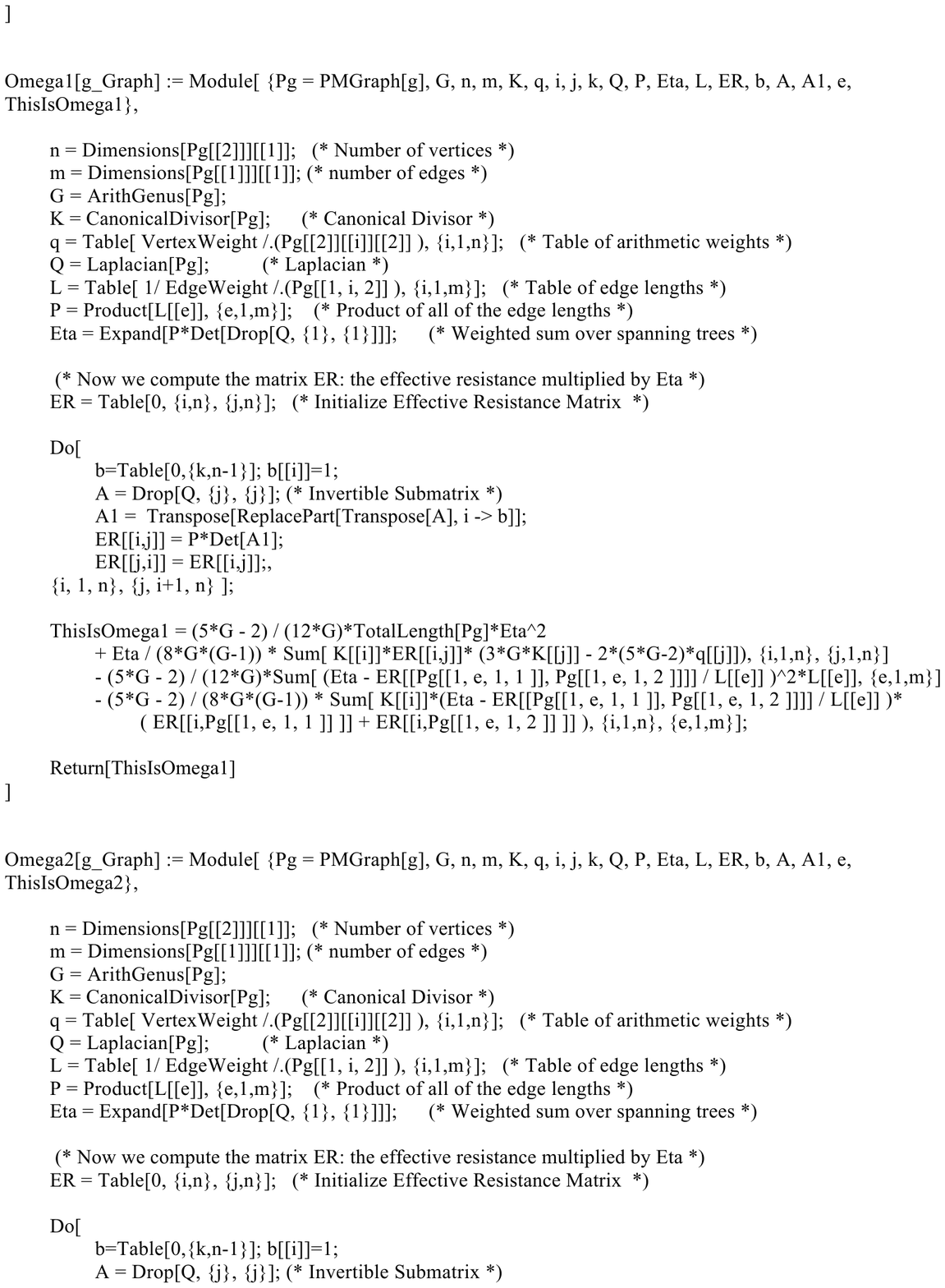}}

	\end{picture}
	
\end{figure*}

\begin{figure*}[p] 

	\begin{picture}(200,420)(0,0)

		\put(-195,-290){\includegraphics{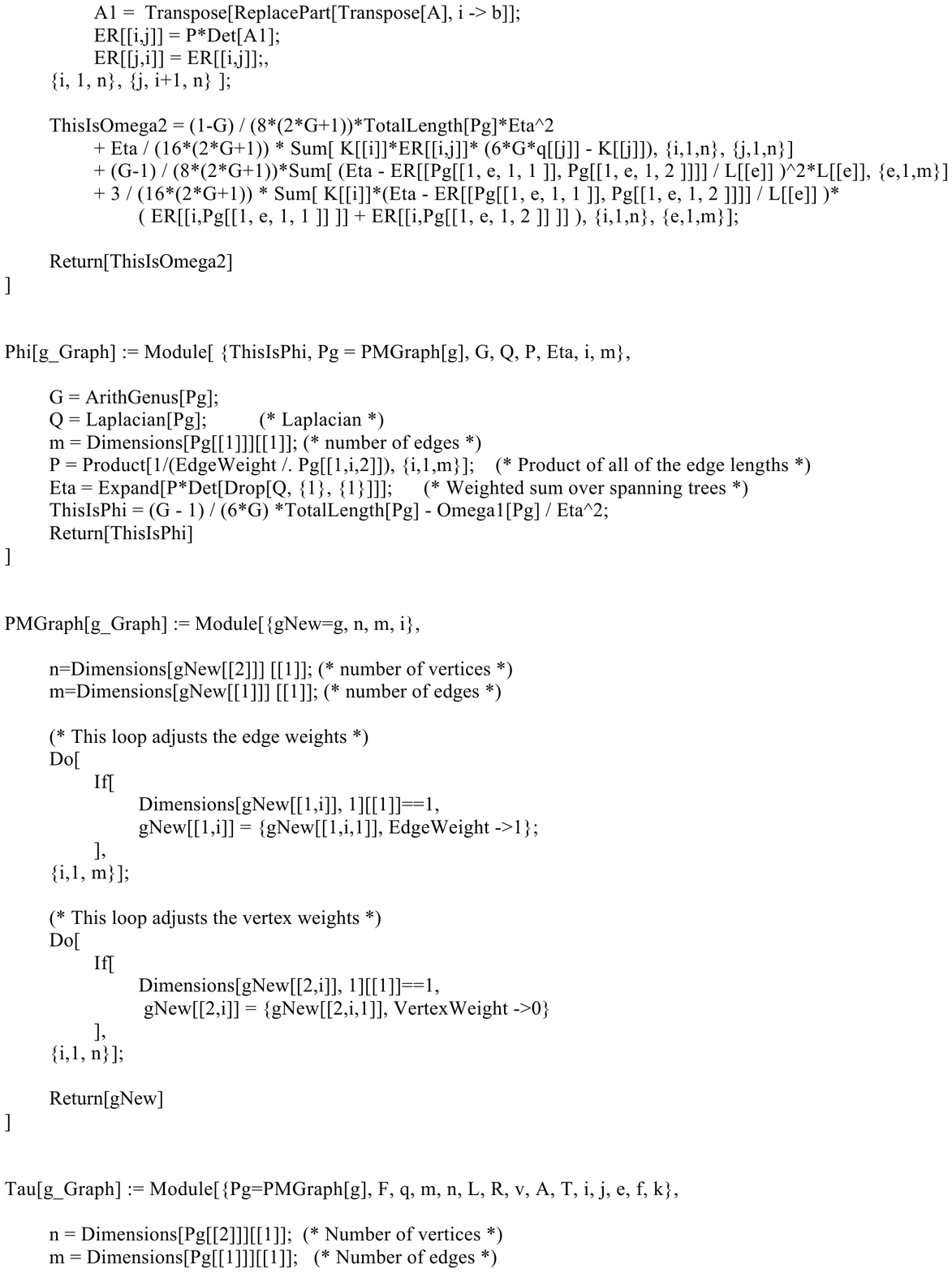}}

	\end{picture}
	
\end{figure*}

\begin{figure*}[p] 

	\begin{picture}(200,420)(0,0)

		\put(-195,-290){\includegraphics{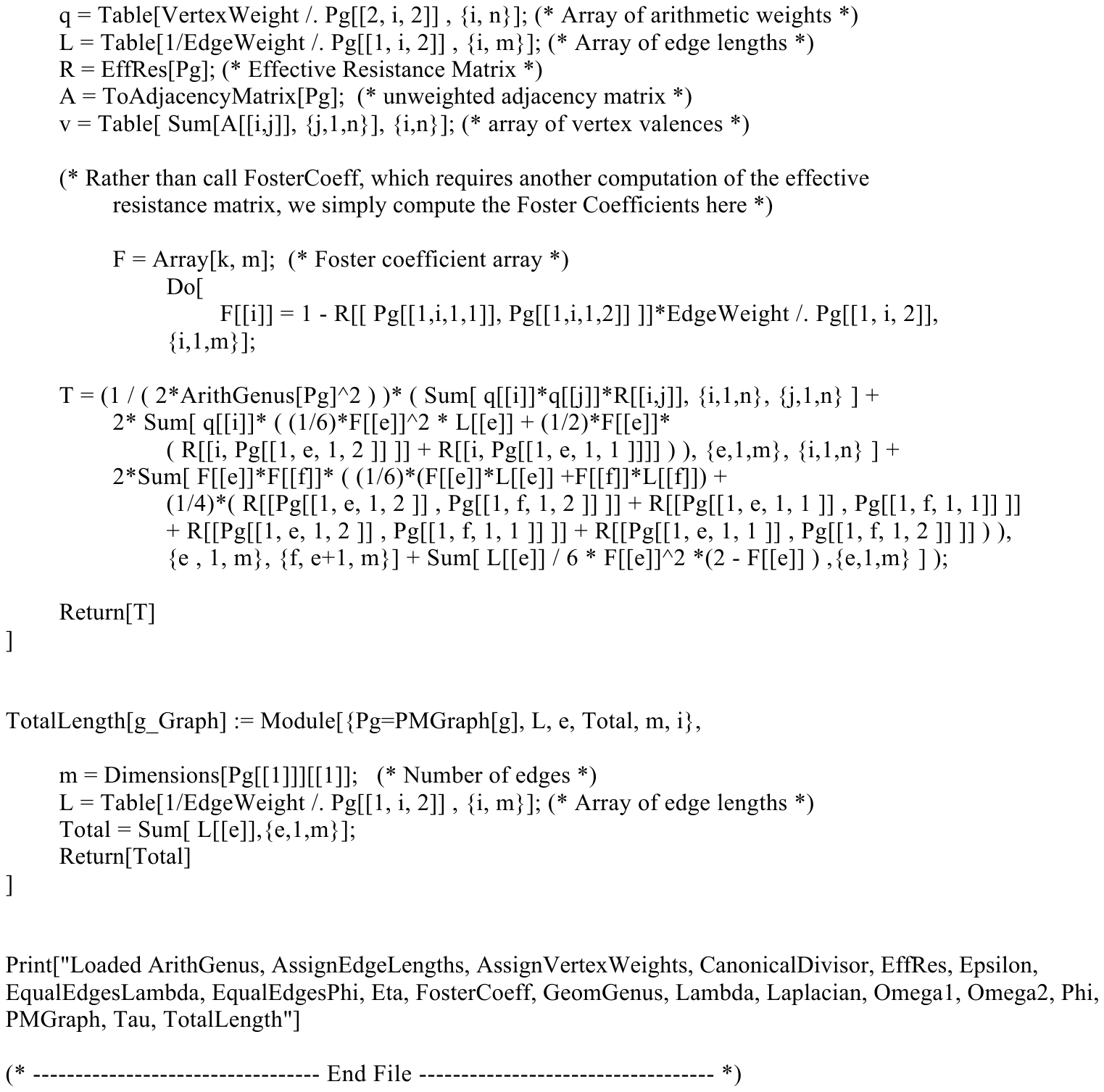}}

	\end{picture}
	
\end{figure*}

\begin{figure*}[p] 

	\begin{picture}(200,420)(0,0)

		\put(-200,-290){\includegraphics{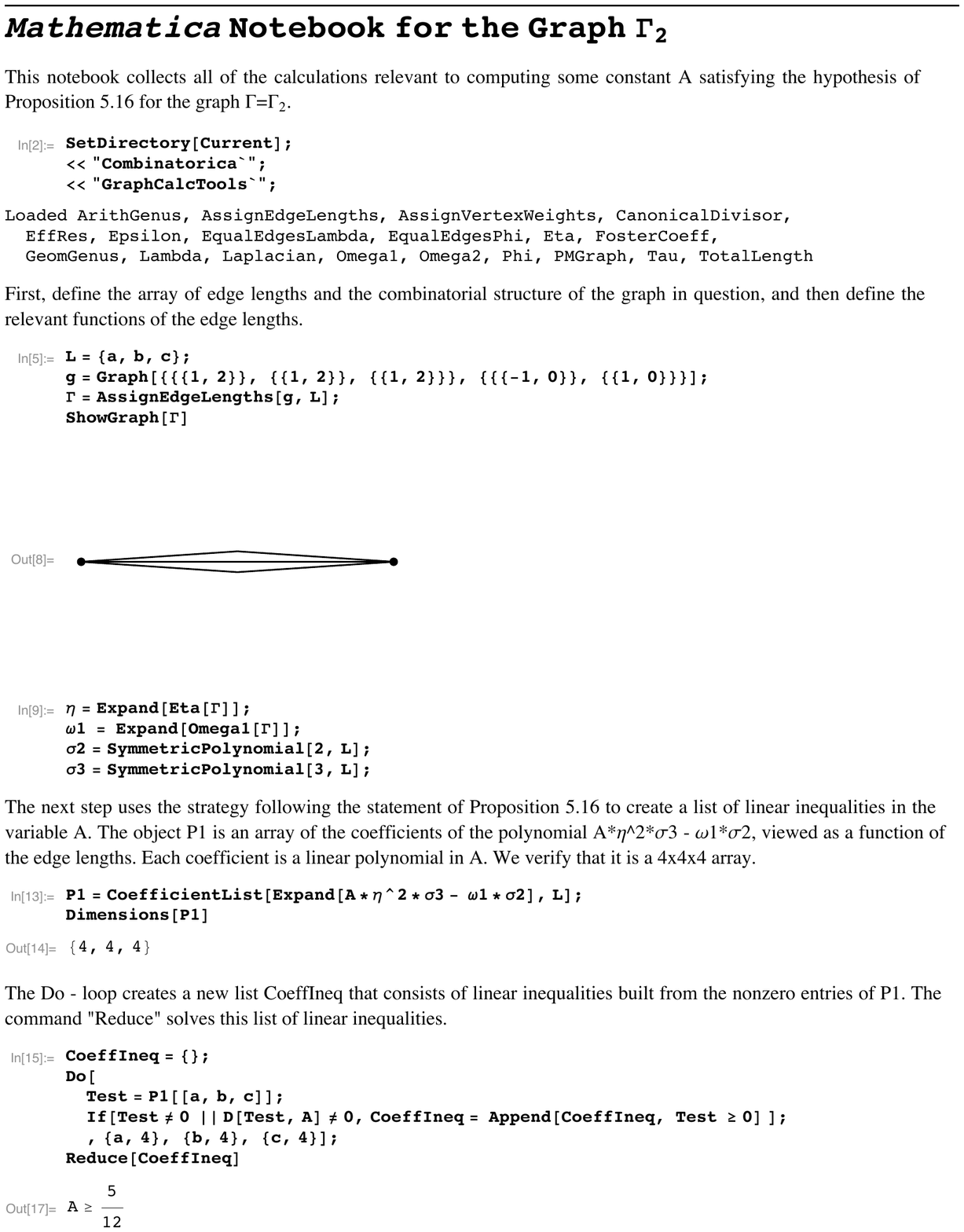}}

	\end{picture}
	
\end{figure*}

\begin{figure*}[p] 

	\begin{picture}(200,420)(0,0)

		\put(-200,-290){\includegraphics{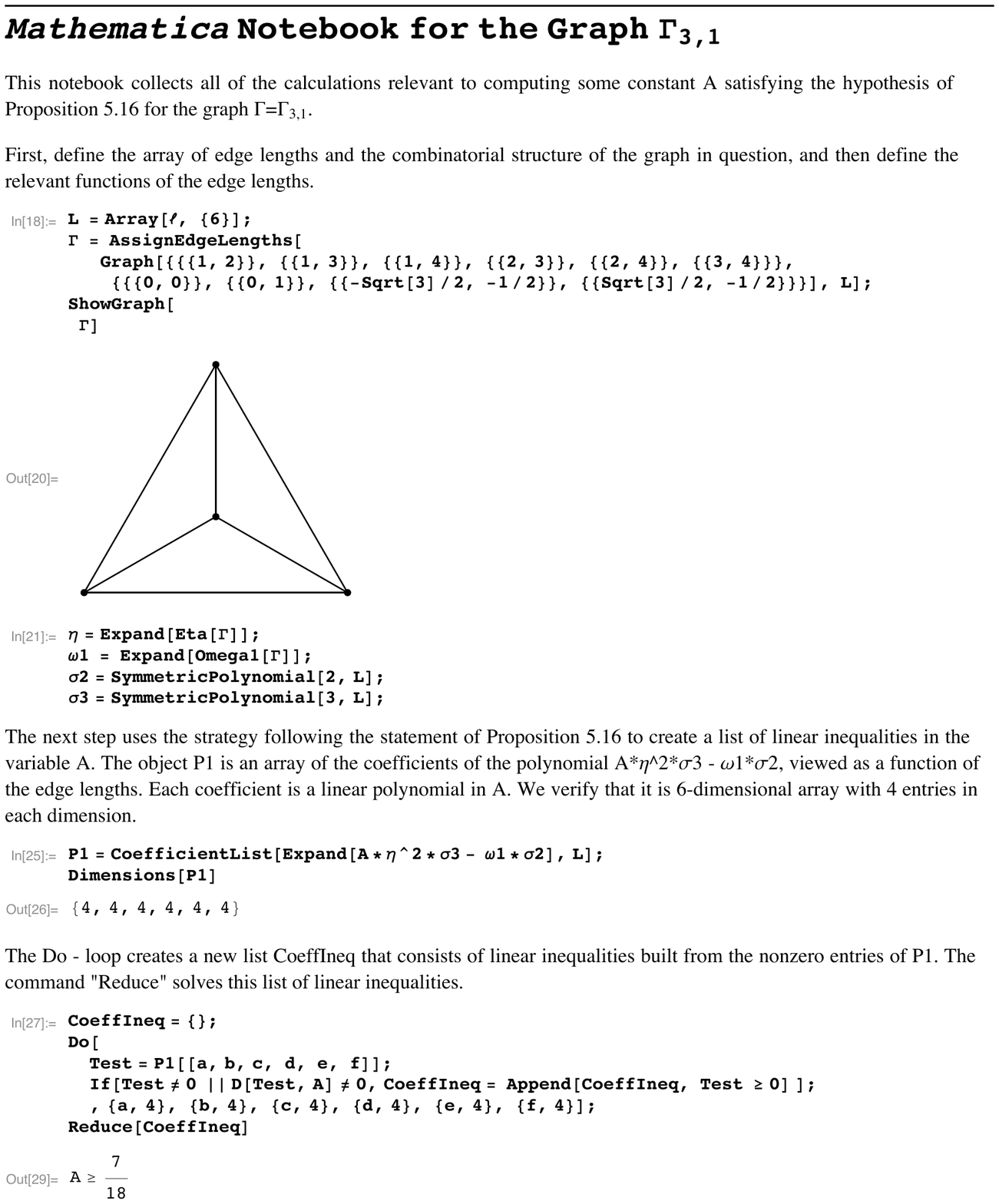}}

	\end{picture}
	
\end{figure*}

\begin{figure*}[p] 

	\begin{picture}(200,420)(0,0)

		\put(-200,-290){\includegraphics{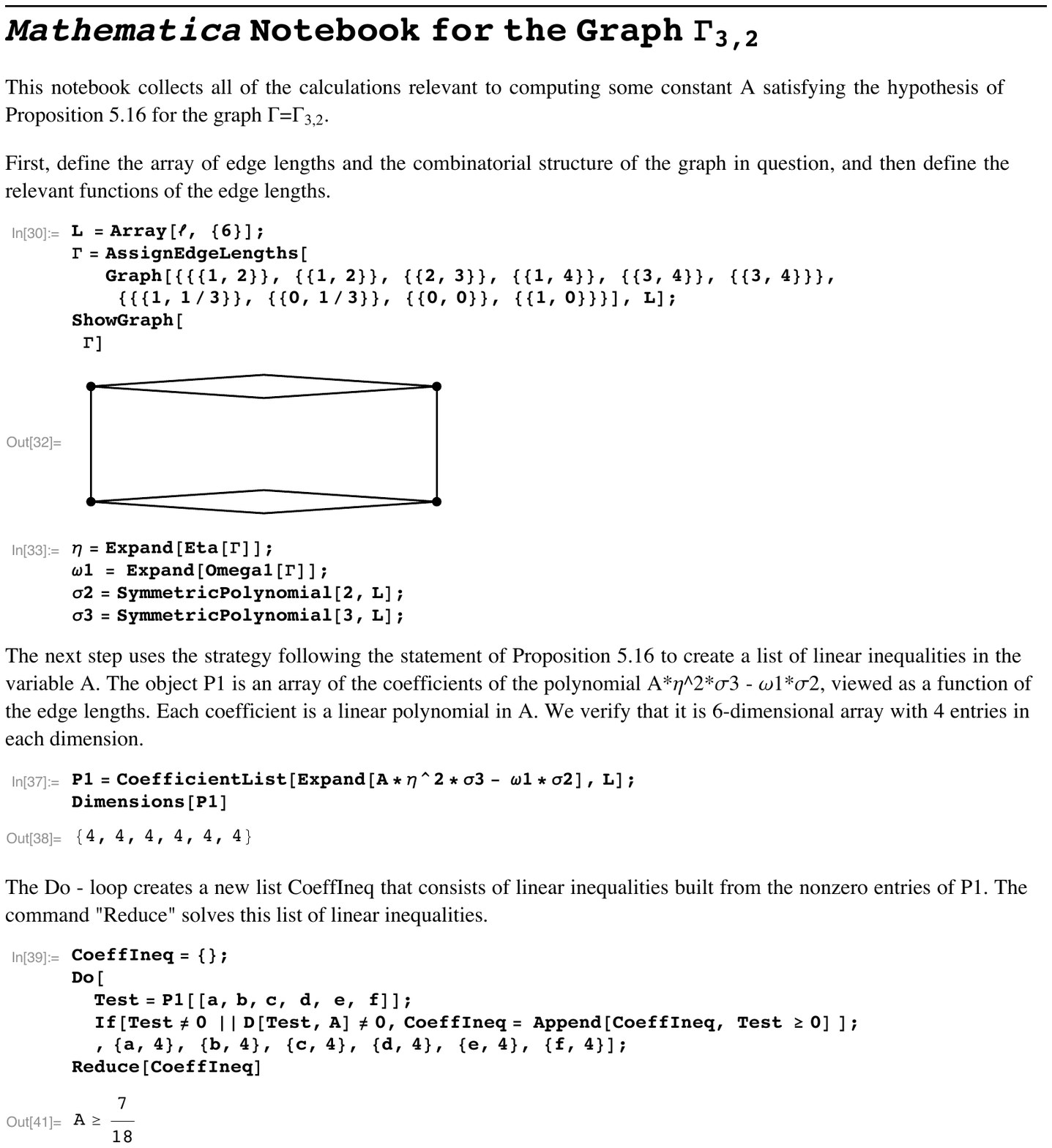}}

	\end{picture}
	
\end{figure*}

\begin{figure*}[p] 

	\begin{picture}(200,420)(0,0)

		\put(-200,-290){\includegraphics{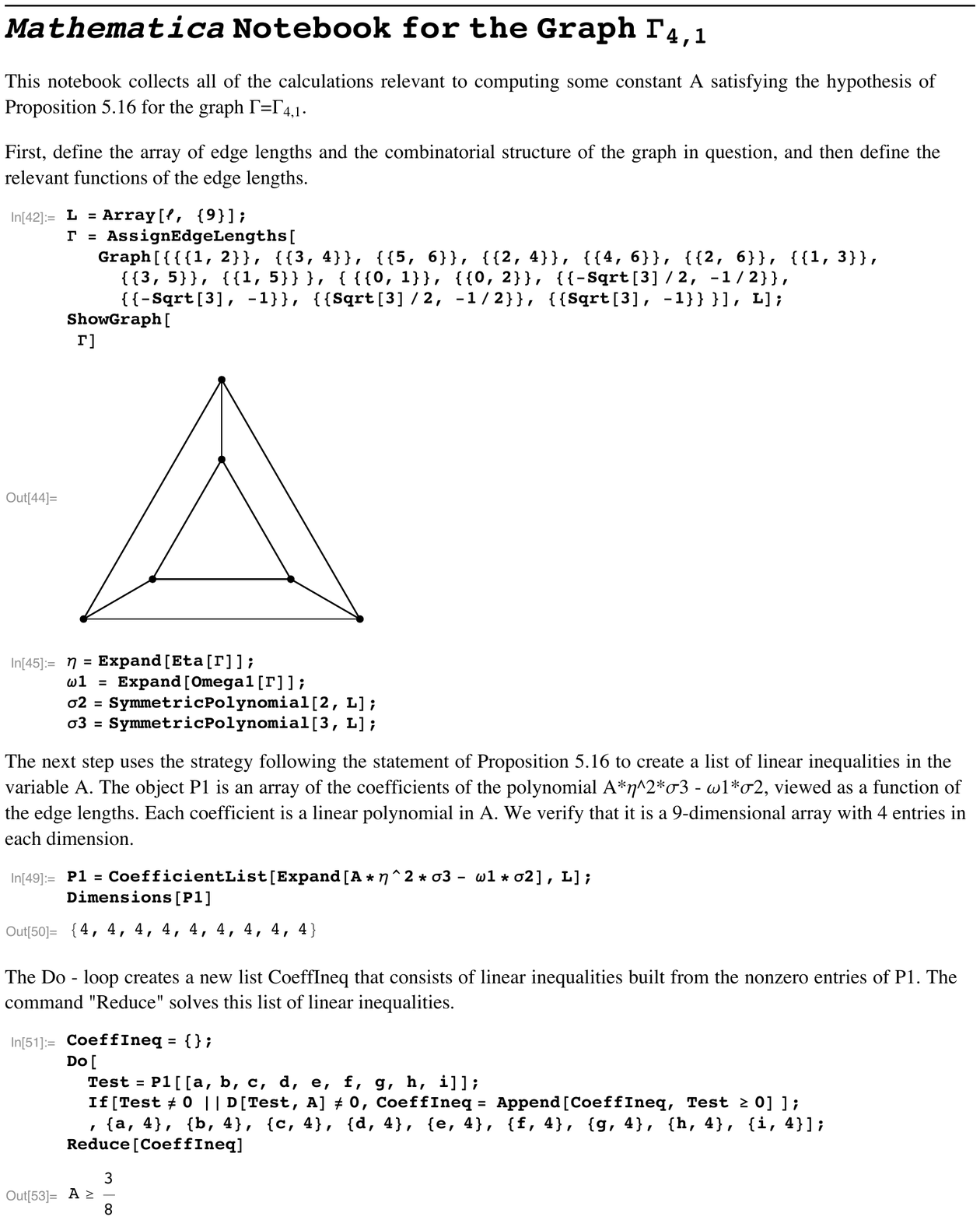}}

	\end{picture}
	
\end{figure*}

\begin{figure*}[p] 

	\begin{picture}(200,420)(0,0)

		\put(-200,-290){\includegraphics{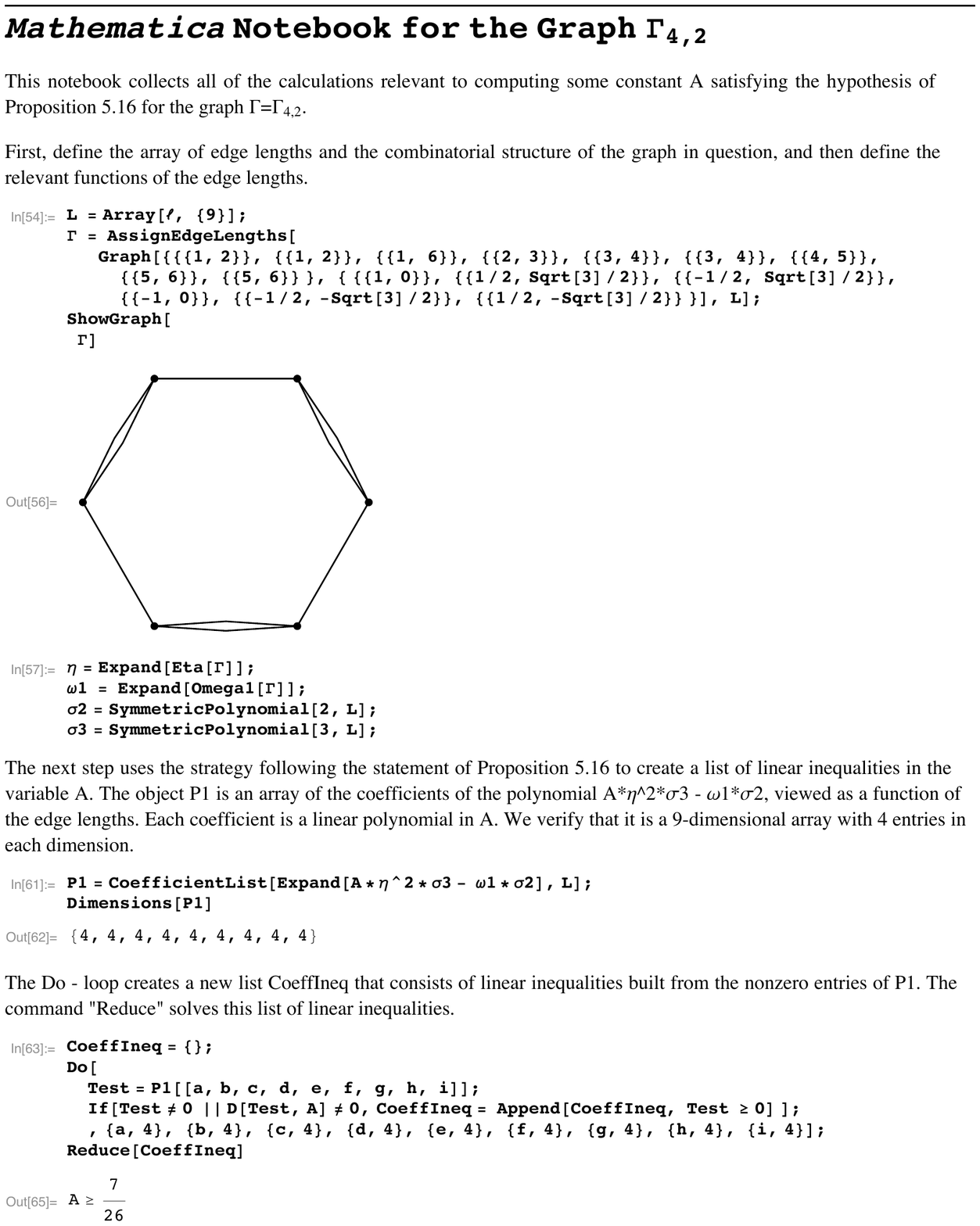}}

	\end{picture}
	
\end{figure*}

\begin{figure*}[p] 

	\begin{picture}(200,420)(0,0)

		\put(-200,-290){\includegraphics{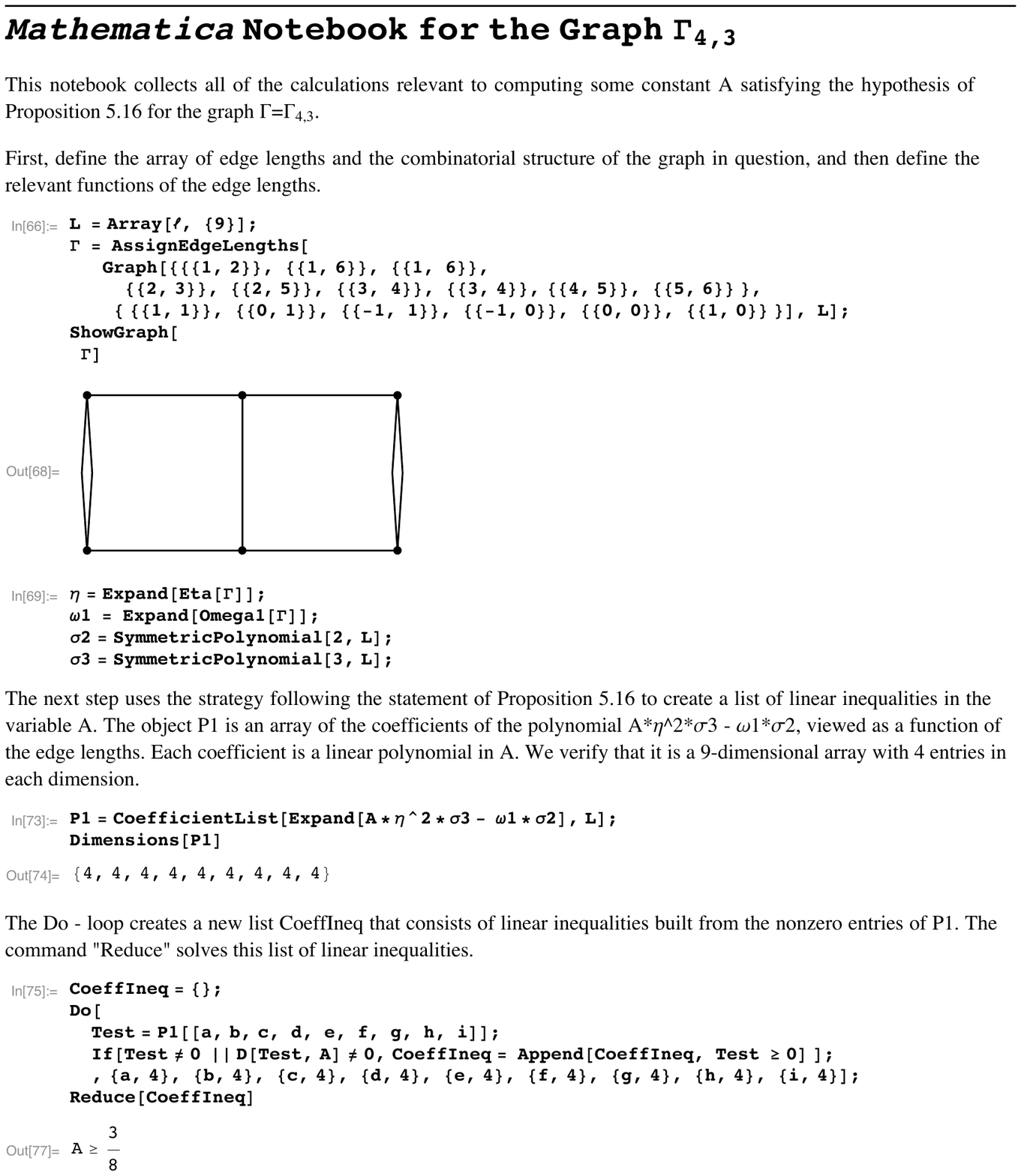}}

	\end{picture}
	
\end{figure*}

\begin{figure*}[p] 

	\begin{picture}(200,420)(0,0)

		\put(-200,-290){\includegraphics{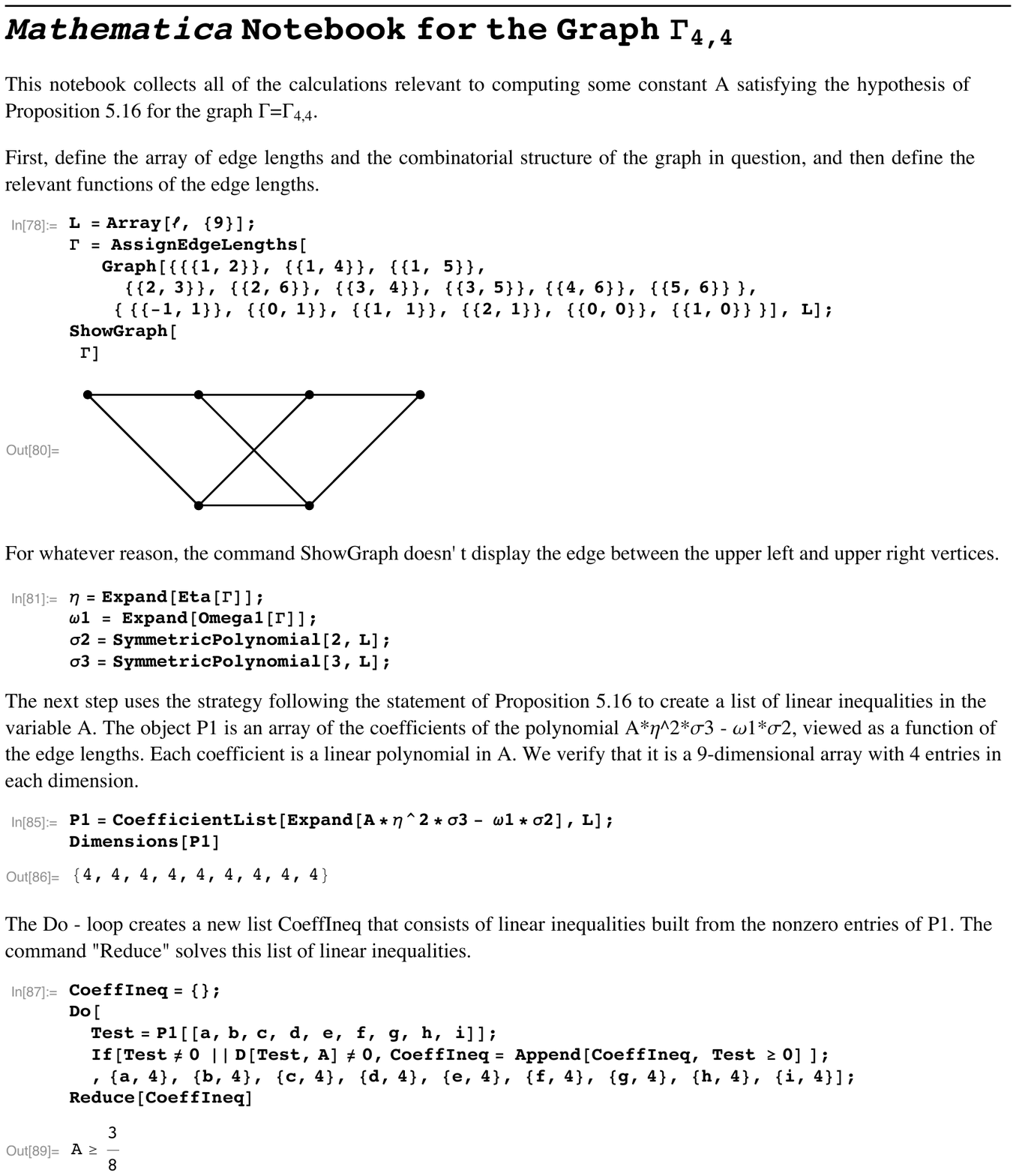}}

	\end{picture}
	
\end{figure*}

\begin{figure*}[p] 

	\begin{picture}(200,420)(0,0)

		\put(-200,-290){\includegraphics{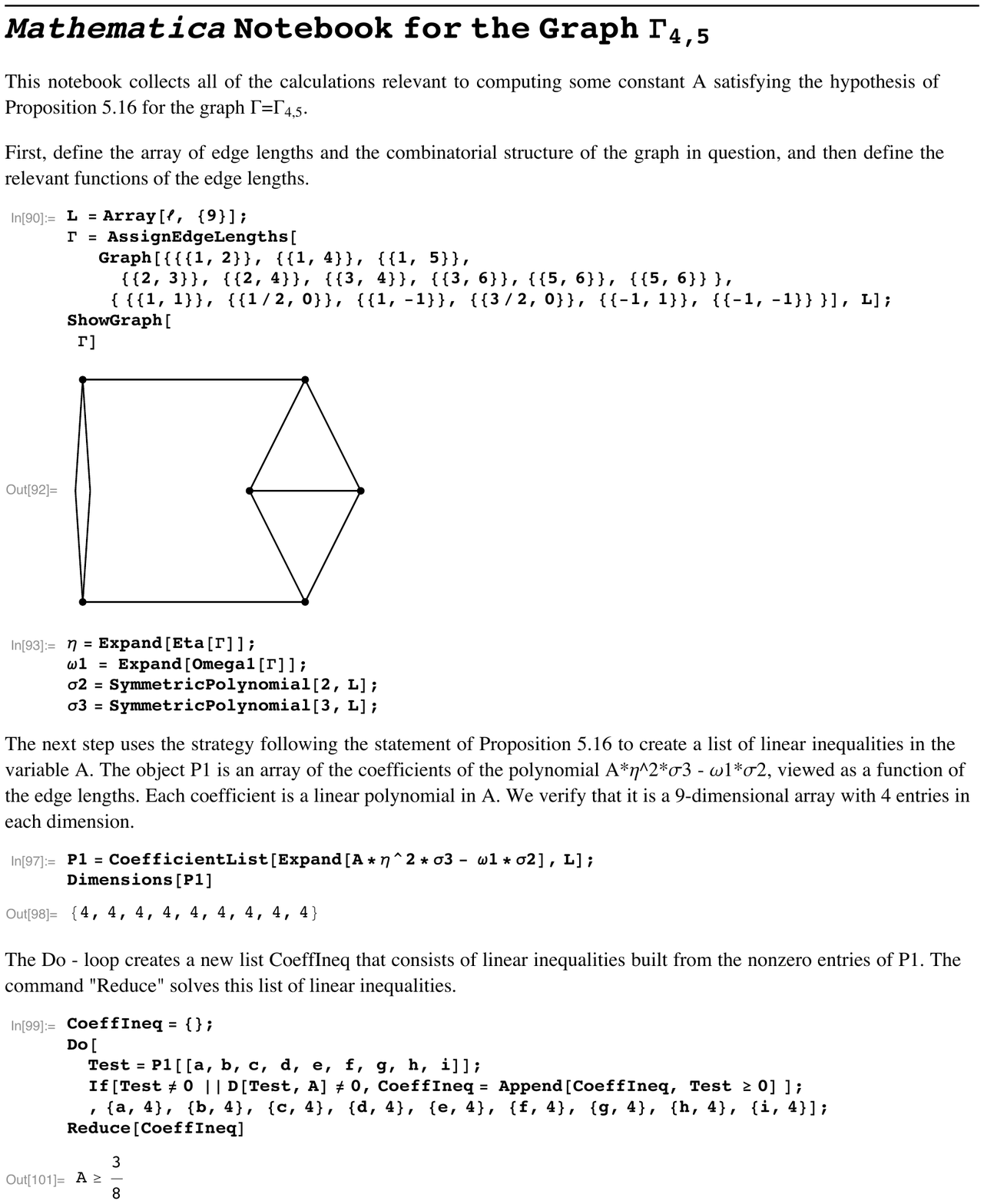}}

	\end{picture}
	
\end{figure*}

\end{withcode}


\bibliographystyle{abbrv}
\bibliography{xander_bib}

\end{document}